\providecommand{\U}[1]{\protect\rule{.1in}{.1in}}
\newtheorem{theorem}{Theorem}
\newtheorem{acknowledgement}[theorem]{Acknowledgement}
\newtheorem{corollary}[theorem]{Corollary}
\newtheorem{definition}[theorem]{Definition}
\newtheorem{lemma}[theorem]{Lemma}
\newtheorem{proposition}[theorem]{Proposition}
\newtheorem{remark}[theorem]{Remark}
\newenvironment{proof}[1][Proof]{\noindent\textbf{#1.} }{\ \rule{0.5em}{0.5em}}
\begin{document}

\title{Weak vorticity formulation of 2D Euler equations with white noise initial condition}
\author{Franco Flandoli}
\maketitle

\begin{abstract}
The 2D Euler equations with random initial condition distributed as a certain
Gaussian measure are considered. The theory developed by S. Albeverio and
A.-B. Cruzeiro in \cite{AlbCruz} is revisited, following the approach of weak
vorticity formulation. A solution is constructed as a limit of random point
vortices. This allows to prove that it is also limit of $L^{\infty}$-vorticity
solutions. The result is generalized to initial measures that have a
continuous bounded density with respect to the original Gaussian measure.

\end{abstract}

\section{Introduction}

We consider the 2D Euler equations on the torus $\mathbb{T}^{2}=\mathbb{R}%
^{2}/\mathbb{Z}^{2}$, formulated in terms of the vorticity $\omega$%
\begin{equation}
\partial_{t}\omega+u\cdot\nabla\omega=0\label{Euler}%
\end{equation}
where $u$ is the velocity, divergence free vector field such that
$\omega=\partial_{2}u_{1}-\partial_{1}u_{2}$. The classical theory (see for
instance \cite{Chemin}, \cite{Lions}, \cite{MajdaBert}, \cite{MarPulv})
includes the following results:

\begin{enumerate}
\item existence and uniqueness of weak solutions of class $L^{\infty}\left(
\left[  0,T\right]  \times\mathbb{T}^{2}\right)  \cap C\left(  \left[
0,T\right]  ;L^{p}\left(  \mathbb{T}^{2}\right)  \right)  $ for every
$p\in\lbrack1,\infty)$, satisfying%
\begin{equation}
\left\langle \omega_{t},\phi\right\rangle =\left\langle \omega_{0}%
,\phi\right\rangle +\int_{0}^{t}\left\langle \omega_{s},u_{s}\cdot\nabla
\phi\right\rangle ds\label{weak Euler}%
\end{equation}
for every $\phi\in C^{\infty}\left(  \mathbb{T}^{2}\right)  $, when the
initial condition $\omega_{0}$ is of class $L^{\infty}\left(  \mathbb{T}%
^{2}\right)  $ (\cite{Wolibner}, \cite{Yudovich}, \cite{MarPulv});

\item existence of weak solutions of class $C\left(  \left[  0,T\right]
;L^{p}\left(  \mathbb{T}^{2}\right)  \right)  $, satisfying (\ref{weak Euler}%
), when the initial condition $\omega_{0}$ is of class $L^{p}\left(
\mathbb{T}^{2}\right)  $, for some $p\in\lbrack1,\infty)$;

\item existence of measure-valued solutions $\omega_{t}\left(  dx\right)  $,
of class $L^{\infty}\left(  0,T;\mathcal{M}\left(  \mathbb{T}^{2}\right)  \cap
H^{-1}\left(  \mathbb{T}^{2}\right)  \right)  $, satisfying for every $\phi\in
C^{\infty}\left(  \mathbb{T}^{2}\right)  $ the so called weak vorticity
formulation%
\begin{equation}
\left\langle \omega_{t},\phi\right\rangle =\left\langle \omega_{0}%
,\phi\right\rangle +\int_{0}^{t}\int_{\mathbb{T}^{2}}\int_{\mathbb{T}^{2}%
}H_{\phi}\left(  x,y\right)  \omega_{s}\left(  dx\right)  \omega_{s}\left(
dy\right)  ds\label{weak vorticity formulation}%
\end{equation}
where%
\[
H_{\phi}\left(  x,y\right)  :=\frac{1}{2}K\left(  x-y\right)  \left(
\nabla\phi\left(  x\right)  -\nabla\phi\left(  y\right)  \right)
\]
and $K\left(  x\right)  $ is Biot-Savart kernel on $\mathbb{T}^{2}$, when the
initial condition is a measure of class $H^{-1}\left(  \mathbb{T}^{2}\right)
$ with a certain condition of preference for a single sign, see \cite{Delort},
\cite{ShochetII}, \cite{DiPernaMajda}; here we have denoted by $\mathcal{M}%
\left(  \mathbb{T}^{2}\right)  $ the space of finite signed measures and by
$H^{\alpha}\left(  \mathbb{T}^{2}\right)  $ the classical Sobolev spaces of
order $\alpha\in\mathbb{R}$ defined in Section \ref{sect notations};

\item existence and uniqueness of a measure-valued solution of the form
$\omega_{t}\left(  dx\right)  =\sum_{i=1}^{N}\xi_{i}\delta_{X_{t}^{i}}$,
fulfilling (\ref{weak vorticity formulation}), when the initial condition has
the form $\omega_{0}\left(  dx\right)  =\sum_{i=1}^{N}\xi_{i}\delta_{X_{0}%
^{i}}$, with real valued intensities $\xi_{1},...,\xi_{N}$, and $\left(
X_{0}^{1},...,X_{0}^{N}\right)  $ belonging to a set of full Lebesgue measure
in $\left(  \mathbb{T}^{2}\right)  ^{N}$, see \cite{MarPulv}.
\end{enumerate}

Obviously there are many other results, reported in the references above and
other works, including counterexamples to uniqueness like \cite{Shnir}. The
previous choice has been made to illustrate the attempt to include weaker and
weaker concepts of solutions. Very important for result n. 3 has been the
symmetrization step from (\ref{weak Euler}) to
(\ref{weak vorticity formulation}): the kernel $H_{\phi}\left(  x,y\right)  $
is bounded, smooth outside the diagonal, discontinuous along the
diagonal;\ hence a fine analysis of the concentration of $\omega_{t}\left(
dx\right)  $ around the diagonal is important but at least the singularity of
order $\frac{1}{\left\vert x\right\vert }$ of Biot-Savart kernel $K\left(
x\right)  $ has been removed.

In the present paper we discuss a probabilistic result for Euler equations,
interpreted in the form (\ref{weak vorticity formulation}). We like to state
it first in purely deterministic terms, here in the introduction, for the sake
of comparison with the "scale" of results above. Then, in the rest of the
paper, the probabilistic side will be stressed more. Denote by $H^{-1-}\left(
\mathbb{T}^{2}\right)  $ the space $%
{\displaystyle\bigcap\limits_{\epsilon>0}}
H^{-1-\epsilon}\left(  \mathbb{T}^{2}\right)  $, with the topology described
in Section \ref{sect notations} and notice that $\mathcal{M}\left(
\mathbb{T}^{2}\right)  \subset H^{-1-}\left(  \mathbb{T}^{2}\right)  $,
because by Sobolev embedding $H^{1+\epsilon}\left(  \mathbb{T}^{2}\right)
\subset C\left(  \mathbb{T}^{2}\right)  $. Moreover, denote by $K_{\epsilon}$
the smooth approximations of $K$ given by (\ref{def K eps}) below and, given a
sequence $\epsilon_{n}\rightarrow0$, set $H_{\phi}^{n}\left(  x,y\right)
:=\frac{1}{2}K_{\epsilon_{n}}\left(  x-y\right)  \left(  \nabla\phi\left(
x\right)  -\nabla\phi\left(  y\right)  \right)  $;\ by classical distribution
theory, $s\mapsto\left\langle \omega_{s}\otimes\omega_{s},H_{\phi}%
^{n}\right\rangle $ is well defined and continuous when $\omega\in C\left(
\left[  0,T\right]  ;H^{-1-}\left(  \mathbb{T}^{2}\right)  \right)  $.

\begin{theorem}
\label{Thm intro}There exist $\epsilon_{n}\rightarrow0$ and a large set
\[
\mathcal{IC}_{0}\subset H^{-1-}\left(  \mathbb{T}^{2}\right)  \backslash
\left(  H^{-1}\left(  \mathbb{T}^{2}\right)  \cup\mathcal{M}\left(
\mathbb{T}^{2}\right)  \right)
\]
of initial conditions such that for all $\omega_{0}\in\mathcal{IC}_{0}$ the
following properties hold.

i) there exists $\omega\in C\left(  \left[  0,T\right]  ;H^{-1-}\left(
\mathbb{T}^{2}\right)  \right)  $ such that, for every $\phi\in C^{\infty
}\left(  \mathbb{T}^{2}\right)  $, the sequence of functions $s\mapsto
\left\langle \omega_{s}\otimes\omega_{s},H_{\phi}^{n}\right\rangle $ is a
Cauchy sequence in $L^{2}\left(  0,T\right)  $ and, denoted by $s\mapsto
\left\langle \omega_{s}\otimes\omega_{s},H_{\phi}\right\rangle $ its limit,
one has the analog of (\ref{weak vorticity formulation}), namely%
\begin{equation}
\left\langle \omega_{t},\phi\right\rangle =\left\langle \omega_{0}%
,\phi\right\rangle +\int_{0}^{t}\left\langle \omega_{s}\otimes\omega
_{s},H_{\phi}\right\rangle ds\label{weak vorticity formulation 1}%
\end{equation}

ii) there is a sequence $\left\{  \omega^{\left(  n\right)  }\right\}  $ of
solutions of Euler equations of class $L^{\infty}\left(  \left[  0,T\right]
\times\mathbb{T}^{2}\right)  \cap C\left(  \left[  0,T\right]  ;L^{p}\left(
\mathbb{T}^{2}\right)  \right)  $ for every $p\in\lbrack1,\infty)$ (those of
point 1 above) such that $\left\langle \omega_{t}^{\left(  n\right)  }%
,\phi\right\rangle \rightarrow\left\langle \omega_{t},\phi\right\rangle $
uniformly in $t\in\left[  0,T\right]  $, for every $\phi\in C^{\infty}\left(
\mathbb{T}^{2}\right)  $.
\end{theorem}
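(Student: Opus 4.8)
The plan is to realize $\mathcal{IC}_{0}$ as a set of full measure for the Gaussian (white noise / enstrophy) measure $\mu$ on $H^{-1-}\left(\mathbb{T}^{2}\right)$, the measure which is formally invariant under (\ref{Euler}) because the Euler flow is Liouville and conserves the enstrophy $\int\omega^{2}$. A $\mu$-typical $\omega_{0}$ lies in $H^{-1-\epsilon}\left(\mathbb{T}^{2}\right)$ for every $\epsilon>0$ but in neither $H^{-1}\left(\mathbb{T}^{2}\right)$ nor $\mathcal{M}\left(\mathbb{T}^{2}\right)$, which is exactly the location demanded in the statement. The whole construction is driven by the invariance of $\mu$: I would first build a stationary process $\left\{\omega_{t}\right\}_{t\in\left[0,T\right]}$ with one-time marginal $\mu$ solving an approximate dynamics, and then extract the deterministic conclusions (i)--(ii) for $\mu$-almost every trajectory by a Fubini argument.

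For the construction I would use either the spectral Galerkin truncation (projection onto Fourier modes $\left\vert k\right\vert\le N$) or the random point-vortex systems of point 4 of the introduction, both finite-dimensional Hamiltonian systems preserving the corresponding projected Gaussian measure, hence stationary with the correct marginals. Tightness of their laws in $C\left(\left[0,T\right];H^{-1-}\left(\mathbb{T}^{2}\right)\right)$ follows from two ingredients: spatial compactness of the embedding $H^{-1-\epsilon/2}\hookrightarrow H^{-1-\epsilon}$ together with the uniform bound on the marginals, and time equicontinuity from the equation itself, since the right-hand side is controlled uniformly in $N$ because the kernel $H_{\phi}$ is bounded. A Prokhorov--Skorokhod extraction then produces a stationary limit process $\omega\in C\left(\left[0,T\right];H^{-1-}\left(\mathbb{T}^{2}\right)\right)$ with marginal $\mu$.

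The analytic heart of part (i) is the behaviour of the quadratic functional $\left\langle\omega\otimes\omega,H_{\phi}^{n}\right\rangle$ under $\mu$. The decisive structural fact is that $H_{\phi}$ is \emph{bounded}, its singularity $1/\left\vert x-y\right\vert$ coming from $K$ being cancelled by the factor $\nabla\phi\left(x\right)-\nabla\phi\left(y\right)$, so $H_{\phi}\in L^{2}\left(\mathbb{T}^{2}\times\mathbb{T}^{2}\right)$; moreover each smooth approximant satisfies $H_{\phi}^{n}\left(x,x\right)=0$ identically, so $\left\langle\omega\otimes\omega,H_{\phi}^{n}\right\rangle$ coincides with its Wick-ordered version and is a genuine element of the second Wiener chaos of $\mu$ requiring no divergent renormalisation. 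Since $H_{\phi}^{n}\to H_{\phi}$ in $L^{2}\left(\mathbb{T}^{2}\times\mathbb{T}^{2}\right)$, the isometry of the second chaos gives convergence of $\left\langle\omega\otimes\omega,H_{\phi}^{n}\right\rangle$ in $L^{2}\left(\mu\right)$, defining the limit object. Invoking stationarity,
\begin{equation*}
\mathbb{E}\int_{0}^{T}\bigl|\left\langle\omega_{s}\otimes\omega_{s},H_{\phi}^{n}\right\rangle-\left\langle\omega_{s}\otimes\omega_{s},H_{\phi}^{m}\right\rangle\bigr|^{2}\,ds=T\,\bigl\|\left\langle\omega\otimes\omega,H_{\phi}^{n}\right\rangle-\left\langle\omega\otimes\omega,H_{\phi}^{m}\right\rangle\bigr\|_{L^{2}\left(\mu\right)}^{2}\to0,
\end{equation*}
so $s\mapsto\left\langle\omega_{s}\otimes\omega_{s},H_{\phi}^{n}\right\rangle$ is Cauchy in $L^{2}\left(\Omega\times\left[0,T\right]\right)$. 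Choosing a fast subsequence $\epsilon_{n}\to0$ and applying Fubini together with Borel--Cantelli over a countable dense family of test functions $\phi$ yields a full-$\mu$-measure set $\mathcal{IC}_{0}$ on which the sequence is Cauchy in $L^{2}\left(0,T\right)$ for every $\phi$; passing to the limit in the regularised identity gives (\ref{weak vorticity formulation 1}).

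For part (ii) I would approximate each $\omega_{0}\in\mathcal{IC}_{0}$ by smooth data $\omega_{0}^{\left(n\right)}$ (Fourier truncation or mollification of $\omega_{0}$), which are of class $L^{\infty}\left(\mathbb{T}^{2}\right)$, and let $\omega^{\left(n\right)}$ be the corresponding Yudovich solutions of point 1. The invariance of $\mu$ supplies uniform-in-$n$ bounds on the marginals and, via the same second-chaos computation, strong $L^{2}$ convergence of the nonlinear term $\left\langle\omega^{\left(n\right)}\otimes\omega^{\left(n\right)},H_{\phi}\right\rangle$, upgrading the weak convergence of $\left\langle\omega_{t}^{\left(n\right)},\phi\right\rangle$ to the uniform-in-$t$ convergence claimed. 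The main obstacle, and the step requiring the most care, is precisely this passage to the limit in the quadratic nonlinearity: weak convergence of $\omega^{\left(n\right)}$ alone is insufficient for a product, and it is only the combination of the vanishing diagonal of $H_{\phi}^{n}$, the $L^{2}$-boundedness of the kernel, and the second-chaos isometry under the invariant Gaussian measure that forces the convergence and identifies the limit with the solution built in part (i).
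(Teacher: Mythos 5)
Your part (i) follows essentially the paper's route: a stationary approximating dynamics (the paper uses the random point-vortex system, whose initial configurations converge to white noise by a Hilbert-space CLT), tightness in $C\left(\left[0,T\right];H^{-1-}\left(\mathbb{T}^{2}\right)\right)$ via Simon's compactness criterion, a Prohorov--Skorokhod extraction, and the definition of the nonlinear term through the variance identity $\mathbb{E}\bigl[\bigl\vert\left\langle\omega\otimes\omega,f\right\rangle-\mathbb{E}\left\langle\omega\otimes\omega,f\right\rangle\bigr\vert^{2}\bigr]=2\int\int f^{2}dxdy$ combined with stationarity --- this is exactly your second-chaos isometry. One caveat: your Borel--Cantelli argument over a countable dense family $\mathcal{D}$ of test functions only yields the Cauchy property for $\phi\in\mathcal{D}$, while the theorem demands it for \emph{every} $\phi\in C^{\infty}\left(\mathbb{T}^{2}\right)$. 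The paper closes this by a quantitative step you omit: it introduces the random distributions $g_{s}^{n}$ defined by $\left\langle g_{s}^{n},\phi\right\rangle=\left\langle\omega_{s}\otimes\omega_{s},H_{\phi}^{n}\right\rangle$ and proves $\mathbb{E}\int_{0}^{T}\left\Vert g_{s}^{n}\right\Vert_{H^{-\gamma}}^{2}ds<\infty$ uniformly in $n$ for $\gamma>3$ (using $\left\Vert H_{e_{k}}^{n}\right\Vert_{\infty}\leq C\left\vert k\right\vert^{2}$), which gives an almost sure, uniform-in-$n$ continuity in $\phi$ allowing the passage from $\mathcal{D}$ to all test functions. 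This is fillable but must be supplied.

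The genuine gap is in part (ii). You propose to mollify or Fourier-truncate $\omega_{0}$, take the Yudovich solutions $\omega^{\left(n\right)}$ from these smooth data, and control them by claiming that ``the invariance of $\mu$ supplies uniform-in-$n$ bounds on the marginals'' and a second-chaos computation for $\left\langle\omega^{\left(n\right)}\otimes\omega^{\left(n\right)},H_{\phi}\right\rangle$. No such invariance is available: the projected Gaussian measure is preserved by the \emph{Galerkin dynamics} (and Lebesgue measure by the point-vortex dynamics), not by the full Euler flow acting on truncated white-noise initial data. For $t>0$ the law of $\omega_{t}^{\left(n\right)}$ is the push-forward of the law of $\omega_{0}^{\left(n\right)}$ under the nonlinear Yudovich flow, about which nothing Gaussian is known; its marginals are not second-chaos variables of $\mu$, so the isometry cannot be applied, and there is no uniform bound to extract convergence. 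This is precisely the difficulty the paper flags when it says its result is weaker than Theorem 2.6 of \cite{Tzvetkov} (arbitrary smooth approximations of the data) and only in the spirit of Theorem 2.7 (particular approximations). The paper's proof of (ii) does not approximate the initial datum at all: it uses that, by Theorem \ref{Thm AC} part (ii), the solution $\omega_{\cdot}\left(\theta\right)$ is $P$-a.s. the $C\left(\left[0,T\right];H^{-1-}\left(\mathbb{T}^{2}\right)\right)$-limit of point-vortex solutions (each a true global solution of the weak vorticity formulation), and then applies Theorem 2.1 of \cite{MarPulv93} to approximate each fixed point-vortex solution, uniformly in time, by $L^{\infty}$ vortex-patch solutions; a two-step $\epsilon/2$ argument concludes. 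Your scheme, as written, amounts to the open problem of proving that solutions from generic smooth approximations of white-noise data converge, which neither the paper nor current techniques resolve.
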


\begin{remark}
How large is the set of initial conditions, it is clarified below in Section
\ref{Sect proof Thm intro}. It is a full measure set with respect to the
Gaussian measure $\mu$ introduced in Section \ref{subseq WN}
\end{remark}

\begin{remark}
In fact the set of initial conditions given by this theorem is included in a
more regular space $H^{-1-,\infty}\left(  \mathbb{T}^{2}\right)  $, where also
the solutions live, defined in Section \ref{subseq WN} below. We have not used
$H^{-1-,\infty}\left(  \mathbb{T}^{2}\right)  $ in place of $H^{-1-}\left(
\mathbb{T}^{2}\right)  $ because $\mathcal{M}\left(  \mathbb{T}^{2}\right)
\nsubseteq H^{-1-,\infty}\left(  \mathbb{T}^{2}\right)  $ and thus the
statement would be less clear. Moreover $H^{-1}\left(  \mathbb{T}^{2}\right)
$ and $\mathcal{M}\left(  \mathbb{T}^{2}\right)  $ are not included one in the
other, which again explains the statement.
\end{remark}

Part (i) of Theorem \ref{Thm intro} is a deterministic reformulation of
Theorem \ref{Thm AC} below, which states that Euler equations, interpreted in
the form (\ref{weak vorticity formulation 1}), has a stochastic solution, a
stationary stochastic process with time marginal given by the so called
\textit{white noise} on $\mathbb{T}^{2}$, defined in Section \ref{subseq WN}
below. This probabilistic result is due to Sergio Albeverio and Ana Bela
Cruzeiro \cite{AlbCruz}. Here we provide, with respect to that seminal work,
the so called weak vorticity formulation (\ref{weak vorticity formulation 1})
(opposite to a Fourier formulation), which fits more nicely in the scheme of
results 1-4 above;\ and we prove the existence of a solution as a limit of
random point vortices, a suitable random version of point 4 above. Opposite to
other schemes that can be used to prove existence, based on
\textit{approximated }equations (like the Galerkin scheme of \cite{AlbCruz},
or a Leray type scheme), point vortices are \textit{true solutions} of Euler
equations (see Section \ref{propos point vortices}) and thus establish a
bridge between $L^{\infty}$-vorticity solutions, those of point 1 above, and
Albeverio-Cruzeiro solution, via the result of approximation of point vortices
by vortex patches of Marchioro and Pulvirenti \cite{MarPulv93}. Nicolai
Tzvetkov suggested to investigate question (ii) of Theorem \ref{Thm intro},
which is similar to a question solved (in a stronger sense) for nonlinear wave
equations, see \cite{Tzvetkov}.

The point vortex approximation provides an interesting interpretation of the
white noise solution of Albeverio and Cruzeiro, as a limit of randomly
distributed vortices with positive and negative random vorticities. Under the
viewpoint of the weak vorticity formulation, having in mind the deep
discussions of the deterministic literature on concentration of solutions of
Euler equations (see for instance the works of Delort \cite{Delort}, Schochet
\cite{Shochet}, \cite{ShochetII}, Poupaud \cite{Poup}, Di Perna and Majda
\cite{DiPernaMajda}), a natural question is why the solution found here with
white noise distribution does not "concentrate on the diagonal", in the double
integration of the weak vorticity formulation, where the function $H_{\phi
}\left(  x,y\right)  $ is discontinuous. For the white noise solution the
absence of concentration is encoded in the results of Section
\ref{section nonlinear WN}, which show the power of Gaussian analysis but may
still look obscure. However, the approximation by point vortices provides a
clear intuition about the lack of concentration: at every time, vortices are
distributed at random uniformly in space, independently one of the other.

Among the reasons to reconsider Albeverio-Cruzeiro theory today, there is the
clear success of randomization of initial conditions in solving dispersive
equations, see for instance \cite{Bourg}, \cite{Burq-Tzv}, \cite{Burq-TzvII},
\cite{QuastelValko}, \cite{Richards} \cite{Oh} (the last two, for instance,
describe another PDE that leaves a Gaussian measure invariant) and in
particular the review of N. Tzvetkov \cite{Tzvetkov} on nonlinear wave
equation where Theorems 2.6, 2.7 are devoted to prove that solutions with poor
regularity (constructed for a.e. initial condition with respect to a Gaussian
measure) are the limit of more regular solutions belonging to the classical
theory. As a technical remark, the approximation result above in Theorem
\ref{Thm intro} is definitely weaker than Theorem 2.6 of \cite{Tzvetkov},
where any reasonable smooth approximation of initial conditions leads to
convergent solutions; it is more in the spirit of Theorem 2.7, where
particular approximations are considered. As a general remark, it is not
reasonable to expect for Euler equations the richness of results obtained in
dispersive equations, but nevertheless it may be of interest to make little
improvements. Another source of inspiration for the present work have been the
striking recent theories for certain stochastic nonlinear equations having
Gaussian measures invariant, see for instance \cite{Hairer}, \cite{Gubi},
\cite{GubiPerk}; however, the difficulties for such equations are much greater
than those solved here, although Gaussian analysis is a common core.

We prove existence of a stochastic solution also when the initial condition is
a random distribution with law that have a continuous bounded density with
respect to the original Gaussian measure. The solution has a density also at
time $t$, that satisfies a continuity equation; the results proved here in
this direction are quite elementary corollaries of the main results on white
noise solutions but we think it is of interest to state them for future
investigations in connection with deeper theories on continuity equations in
infinite dimensions, see for instance \cite{AmbrFigalli}, \cite{AmbrTrevisan},
\cite{Bogach}, \cite{Cruz}, \cite{DaPratoRoeckner}, \cite{DFR}, \cite{FangLuo}%
. The case with a density with respect to white noise arises an open question,
described in Section \ref{sect open}, concerning the approximation of smooth
solutions by white noise ones, a sort of dual problem to the one discussed above.

Let us finally mention several other works related to Gaussian invariant
measures for 2D Euler equations: see \cite{AlbFar}, \cite{AlbFer},
\cite{AlbFerCIME}, \cite{AlbKr}, \cite{Cipriano}, \cite{Symeonide}. Several
elements of these works may deserve further analysis.

\subsection{Notations\label{sect notations}}

We denote by $\left\{  e_{n}\right\}  $ the complete orthonormal system in
$L^{2}\left(  \mathbb{T}^{2};\mathbb{C}\right)  $ given by $e_{n}\left(
x\right)  =e^{2\pi in\cdot x}$, $n\in\mathbb{Z}^{2}$. Given a distribution
$\omega\in C^{\infty}\left(  \mathbb{T}^{2}\right)  ^{\prime}$ and a test
function $\phi\in C^{\infty}\left(  \mathbb{T}^{2}\right)  $, we denoted by
$\left\langle \omega,\phi\right\rangle $ the duality between $\omega$ and
$\phi$ (namely $\omega\left(  \phi\right)  $), and we use the same symbol for
the inner product of $L^{2}\left(  \mathbb{T}^{2}\right)  $. We set
$\widehat{\omega}\left(  n\right)  =\left\langle \omega,e_{n}\right\rangle $,
$n\in\mathbb{Z}^{2}$ and we define, for each $s\in\mathbb{R}$, the space
$H^{s}\left(  \mathbb{T}^{2}\right)  $ as the space of all distributions
$\omega\in C^{\infty}\left(  \mathbb{T}^{2}\right)  ^{\prime}$ such that
\[
\left\Vert \omega\right\Vert _{H^{s}}^{2}:=\sum_{n\in\mathbb{Z}^{2}}\left(
1+\left\vert n\right\vert ^{2}\right)  ^{s}\left\vert \widehat{\omega}\left(
n\right)  \right\vert ^{2}<\infty.
\]
We use similar definitions and notations for the space $H^{s}\left(
\mathbb{T}^{2},\mathbb{C}\right)  $ of complex valued functions. In the space
$H^{-1-}\left(  \mathbb{T}^{2}\right)  =%
{\displaystyle\bigcap\limits_{\epsilon>0}}
H^{-1-\epsilon}\left(  \mathbb{T}^{2}\right)  $ we consider the metric%
\[
d_{H^{-1-}}\left(  \omega,\omega^{\prime}\right)  =\sum_{n=1}^{\infty}%
2^{-n}\left(  \left\Vert \omega-\omega^{\prime}\right\Vert _{H^{-1-\frac{1}%
{n}}}\wedge1\right)  .
\]
Convergence in this metric is equivalent to convergence in $H^{-1-\epsilon
}\left(  \mathbb{T}^{2}\right)  $ for every $\epsilon>0$. The space
$H^{-1-}\left(  \mathbb{T}^{2}\right)  $ with this metric is complete and
separable. We denote by $\mathcal{X}:=C\left(  \left[  0,T\right]
;H^{-1-}\left(  \mathbb{T}^{2}\right)  \right)  $ the space of continuous
functions with values in this metric space;\ a function is in $\mathcal{X}$ if
and only if it is in $C\left(  \left[  0,T\right]  ;H^{-1-\epsilon}\left(
\mathbb{T}^{2}\right)  \right)  $ for every $\epsilon>0$. The distance in
$C\left(  \left[  0,T\right]  ;H^{-1-}\left(  \mathbb{T}^{2}\right)  \right)
$ is given by $d_{\mathcal{X}}\left(  \omega_{\cdot},\omega_{\cdot}^{\prime
}\right)  =\sup_{t\in\left[  0,T\right]  }d_{H^{-1-}}\left(  \omega_{t}%
,\omega_{t}^{\prime}\right)  $, which makes $\mathcal{X}$ a Polish space.

For $s>0$, the spaces $H^{s}\left(  \mathbb{T}^{2}\right)  $ and
$H^{-s}\left(  \mathbb{T}^{2}\right)  $ are dual each other. By $H^{s+}\left(
\mathbb{T}^{2}\right)  $ we shall therefore mean the space $%
{\displaystyle\bigcup\limits_{\epsilon>0}}
H^{s+\epsilon}\left(  \mathbb{T}^{2}\right)  $. We shall use this notation in
the case of the space $H^{2+}\left(  \mathbb{T}^{2}\times\mathbb{T}%
^{2}\right)  $, which is similarly defined.

\section{White noise vorticity distribution and the nonlinear term in the weak
vorticity formulation\label{WN and nonlinear term}}

\subsection{White noise\label{subseq WN}}

We start recalling the well known notion of white noise, reviewing some of its
main properties used in the sequel.

White noise on $\mathbb{T}^{2}$ is by definition a Gaussian
distributional-valued stochastic process $\omega:\Xi\rightarrow C^{\infty
}\left(  \mathbb{T}^{2}\right)  ^{\prime}$, defined on some probability space
$\left(  \Xi,\mathcal{F},\mathbb{P}\right)  $, such that
\begin{equation}
\mathbb{E}\left[  \left\langle \omega,\phi\right\rangle \left\langle
\omega,\psi\right\rangle \right]  =\left\langle \phi,\psi\right\rangle
\label{def WN}%
\end{equation}
for all $\phi,\psi\in C^{\infty}\left(  \mathbb{T}^{2}\right)  $ (Gaussian
means that the real valued r.v. $\left\langle \omega,\phi\right\rangle $ is
Gaussian, for every $\phi\in C^{\infty}\left(  \mathbb{T}^{2}\right)  $). We
have denoted by $\left\langle \omega\left(  \theta\right)  ,\phi\right\rangle
$ the duality between the distribution $\omega\left(  \theta\right)  $ (for
some $\theta\in\Xi$)\ and the test function $\phi\in C^{\infty}\left(
\mathbb{T}^{2}\right)  $. These properties uniquely characterize the law of
$\omega$. In more heuristic terms, as it is often written in the Physics
literature,%
\[
\mathbb{E}\left[  \omega\left(  x\right)  \omega\left(  y\right)  \right]
=\delta\left(  x-y\right)
\]
since double integration of this identity against $\phi\left(  x\right)
\psi\left(  y\right)  $ gives (\ref{def WN}). White noise exists: it is
sufficient to take the complete orthonormal system $\left\{  e_{n}\right\}
_{n\in\mathbb{Z}^{2}}$ of $L^{2}\left(  \mathbb{T}^{2},\mathbb{C}\right)  $
introduced in Section \ref{sect notations}, a probability space $\left(
\Xi,\mathcal{F},\mathbb{P}\right)  $ supporting a sequence of independent
standard Gaussian variables $\left\{  G_{n}\right\}  _{n\in\mathbb{Z}^{2}}$,
and consider the series%
\[
\omega=\sqrt{2}\operatorname{Re}\sum_{n\in\mathbb{Z}^{2}}G_{n}e_{n}\text{.}%
\]
The partial sums $\omega_{N}^{\mathbb{C}}\left(  \theta,x\right)
=\sum_{\left\vert n\right\vert \leq N}G_{n}\left(  \theta\right)  e_{n}\left(
x\right)  $ are well defined complex valued random fields with square
integrable paths, $\omega_{N}:\Xi\rightarrow L^{2}\left(  \mathbb{T}%
^{2},\mathbb{C}\right)  $. For every $\epsilon>0$, $\left\{  \omega
_{N}^{\mathbb{C}}\right\}  _{N\in\mathbb{N}}$ is a Cauchy sequence in
$L^{2}\left(  \Xi;H^{-1-\epsilon}\left(  \mathbb{T}^{2},\mathbb{C}\right)
\right)  $, because
\[
\mathbb{E}\left[  \left\Vert \omega_{N}^{\mathbb{C}}\left(  \theta,x\right)
-\omega_{M}^{\mathbb{C}}\left(  \theta,x\right)  \right\Vert _{H^{-1-\epsilon
}}^{2}\right]  =\mathbb{E}\left[  \sum_{M<\left\vert n\right\vert \leq
N}\left(  1+\left\vert n\right\vert ^{2}\right)  ^{-1-\epsilon}\left\vert
G_{n}\right\vert ^{2}\right]  =\sum_{M<\left\vert n\right\vert \leq N}\left(
1+\left\vert n\right\vert ^{2}\right)  ^{-1-\epsilon}.
\]
The limit $\omega^{\mathbb{C}}$ in $L^{2}\left(  \Xi;H^{-1-\epsilon}\left(
\mathbb{T}^{2},\mathbb{C}\right)  \right)  $ thus exists, and $\omega=\sqrt
{2}\operatorname{Re}\omega^{\mathbb{C}}$ is a white noise because (doing
rigorously the computation on the partial sums and then taking the limit) it
is centered and for $\phi,\psi\in C^{\infty}\left(  \mathbb{T}^{2}\right)  $,
\begin{align*}
\mathbb{E}\left[  \left\langle \omega,\phi\right\rangle \left\langle
\omega,\psi\right\rangle \right]   & =\operatorname{Re}\mathbb{E}\left[
\left\langle \omega_{C},\phi\right\rangle \left\langle \overline{\omega_{C}%
},\psi\right\rangle \right]  =\operatorname{Re}\sum_{n,m\in\mathbb{Z}^{2}%
}\left\langle e_{n},\phi\right\rangle \overline{\left\langle e_{m}%
,\psi\right\rangle }\mathbb{E}\left[  G_{n}G_{m}\right] \\
& =\operatorname{Re}\sum_{n\in\mathbb{Z}^{2}}\left\langle e_{n},\phi
\right\rangle \overline{\left\langle e_{n},\psi\right\rangle }=\left\langle
\phi,\psi\right\rangle .
\end{align*}
[One obtains the same result by taking $\omega=\sum_{n\in\mathbb{Z}^{2}}%
G_{n}e_{n}$ where $\mathbb{Z}^{2}\backslash\left\{  0\right\}  $ is
partitioned as $\mathbb{Z}^{2}=\Lambda\cup\left(  -\Lambda\right)  $, $G_{n} $
are i.i.d. $N\left(  0,1\right)  $ on $\Lambda\cup\left\{  0\right\}  $ and
$G_{-n}=\overline{G_{n}}$ for $n\in\Lambda$.] The law $\mu$ of the measurable
map $\omega:\Xi\rightarrow H^{-1-\epsilon}\left(  \mathbb{T}^{2}\right)  $ is
a Gaussian measure (it is sufficient to check that $\left\langle \omega
,\phi\right\rangle $ is Gaussian for every $\phi\in C^{\infty}\left(
\mathbb{T}^{2}\right)  $, and this is true since $\left\langle \omega
,\phi\right\rangle $ is the $L^{2}\left(  \Xi\right)  $-limit of the Gaussian
variables $\sum_{\left\vert n\right\vert \leq N}G_{n}\left\langle e_{n}%
,\phi\right\rangle $). The measure $\mu$ is supported by $H^{-1-}\left(
\mathbb{T}^{2}\right)  $ but not by $H^{-1}\left(  \mathbb{T}^{2}\right)  $,
namely we have%
\[
\mu\left(  H^{-1}\left(  \mathbb{T}^{2}\right)  \right)  =0.
\]
It follows from
\[
\mathbb{E}\left[  \left\Vert \omega^{\mathbb{C}}\right\Vert _{H^{-1}}%
^{2}\right]  =\sum_{n\in\mathbb{Z}^{2}}\left(  1+\left\vert n\right\vert
^{2}\right)  ^{-1}=+\infty.
\]

The measure $\mu$ is sometimes denoted heuristically as%
\[
\mu\left(  d\omega\right)  =\frac{1}{Z}\exp\left(  -\frac{1}{2}\int%
_{\mathbb{T}^{2}}\omega^{2}dx\right)  d\omega
\]
and called the \textit{enstrophy measure}. The notation "$d\omega$" has no
meaning (unless interpreted as a limit of measures on finite dimensional
Euclidean spaces), just reminds the structure of centered nonsingular Gaussian
measures in $\mathbb{R}^{n}$, that is $\mu_{n}\left(  d\omega_{n}\right)
=\frac{1}{Z_{n}}\exp\left(  -\frac{1}{2}\left\langle Q_{n}^{-1}\omega
_{n},\omega_{n}\right\rangle _{\mathbb{R}^{n}}\right)  d\omega_{n}$ where
$d\omega_{n}$ is Lebesgue measure in $\mathbb{R}^{n}$ and $Q_{n}$ is the
covariance matrix. The notation $\int_{\mathbb{T}^{2}}\omega^{2}dx$ alludes to
the fact that $\mu$, heuristically considered as a Gaussian measure on
$L^{2}\left(  \mathbb{T}^{2}\right)  $ (this is not possible, $\mu\left(
L^{2}\left(  \mathbb{T}^{2}\right)  \right)  =0$), has covariance equal to the
identity: if $Q=Id$, then $\left\langle Q^{-1}\omega,\omega\right\rangle
_{L^{2}}=\int_{\mathbb{T}^{2}}\omega^{2}dx$. The fact that in $L^{2}\left(
\mathbb{T}^{2}\right)  $ the covariance operator $Q$, heuristically defined as%
\[
\left\langle Q\omega,\omega\right\rangle _{L^{2}}=\mathbb{E}\left[
\left\langle \omega,\phi\right\rangle _{L^{2}}\left\langle \omega
,\psi\right\rangle _{L^{2}}\right]
\]
is the identity in the case of the law $\mu$ of white noise, is a simple
"consequence" (the argument is not rigorous ab initio) of the definition
(\ref{def WN}) of white noise.

White noise realizations are in fact more regular than $H^{-1-}\left(
\mathbb{T}^{2}\right)  $. The general idea, used several times in
investigations of this kind, is that when a Gaussian field is $L^{2}$ it is
also more regular, because higher order moments are simply related to second
moments and Kolmogorov regularity theorem applies. Let us see this fact in the
case of white noise $\omega$. Given $\epsilon>0$, we know that $\omega\in
H^{-1-\epsilon}\left(  \mathbb{T}^{2}\right)  $ with probability one. Consider
the random field%
\[
\psi\left(  \theta,x\right)  :=\left(  \left(  1+\Delta\right)  ^{-\frac
{1+\epsilon}{2}}\omega\left(  \theta\right)  \right)  \left(  x\right)
=\left\langle \omega\left(  \theta\right)  ,\left(  1+\Delta\right)
^{-\frac{1+\epsilon}{2}}\delta_{x}\right\rangle .
\]
We have $\psi\in L^{2}\left(  \mathbb{T}^{2}\right)  $ with probability one.
But $\psi$ is a Gaussian field. We have in particular%
\[
\mathbb{E}\left[  \left\vert \psi\left(  x\right)  -\psi\left(  y\right)
\right\vert ^{p}\right]  \leq C_{p}\mathbb{E}\left[  \left\vert \psi\left(
x\right)  -\psi\left(  y\right)  \right\vert ^{2}\right]  ^{p/2}%
\]
and, denoting $\left(  1+\Delta\right)  ^{-\frac{1+\epsilon}{2}}\delta_{x}$
and $\left(  1+\Delta\right)  ^{-\frac{1+\epsilon}{2}}\delta_{y}$ respectively
by $f_{x}$, $f_{y}$,
\begin{align*}
& \mathbb{E}\left[  \left\vert \psi\left(  x\right)  -\psi\left(  y\right)
\right\vert ^{2}\right] \\
& =\mathbb{E}\left[  \psi\left(  x\right)  \psi\left(  x\right)  \right]
-2\mathbb{E}\left[  \psi\left(  x\right)  \psi\left(  y\right)  \right]
+\mathbb{E}\left[  \psi\left(  y\right)  ^{2}\right] \\
& =\mathbb{E}\left[  \left\langle \omega,f_{x}\right\rangle \left\langle
\omega,f_{x}\right\rangle \right]  -2\mathbb{E}\left[  \left\langle
\omega,f_{x}\right\rangle \left\langle \omega,f_{y}\right\rangle \right]
+\mathbb{E}\left[  \left\langle \omega,f_{y}\right\rangle \left\langle
\omega,f_{y}\right\rangle \right]
\end{align*}
and now we use definition (\ref{def WN})%
\begin{align*}
& =\left\langle f_{x},f_{x}\right\rangle -2\left\langle f_{x},f_{y}%
\right\rangle +\left\langle f_{y},f_{y}\right\rangle \\
& =\left\Vert f_{x}-f_{y}\right\Vert _{L^{2}}^{2}=\left\Vert \left(
1+\Delta\right)  ^{-\frac{1+\epsilon}{2}}\delta_{x}-\left(  1+\Delta\right)
^{-\frac{1+\epsilon}{2}}\delta_{y}\right\Vert _{L^{2}}^{2}\\
& \leq C_{\epsilon}\left\vert x-y\right\vert ^{\alpha\left(  \epsilon\right)
}%
\end{align*}
for a suitable number $\alpha\left(  \epsilon\right)  >0$ and a constant
$C_{\epsilon}>0$;\ the last inequality can be proved as
\begin{align*}
\sup_{\left\Vert \phi\right\Vert _{L^{2}}\leq1}\left\vert \left\langle \left(
1+\Delta\right)  ^{-\frac{1+\epsilon}{2}}\left(  \delta_{x}-\delta_{y}\right)
,\phi\right\rangle \right\vert  & =\sup_{\left\Vert \phi\right\Vert _{L^{2}%
}\leq1}\left\vert \left(  \left(  1+\Delta\right)  ^{-\frac{1+\epsilon}{2}%
}\phi\right)  \left(  x\right)  -\left(  \left(  1+\Delta\right)
^{-\frac{1+\epsilon}{2}}\phi\right)  \left(  y\right)  \right\vert \\
& \leq\sup_{\left\Vert \phi\right\Vert _{L^{2}}\leq1}\left\Vert \left(
1+\Delta\right)  ^{-\frac{1+\epsilon}{2}}\phi\right\Vert _{C^{\alpha\left(
\epsilon\right)  }}\left\vert x-y\right\vert ^{\alpha\left(  \epsilon\right)
}\\
& \leq C_{\epsilon}\sup_{\left\Vert \phi\right\Vert _{L^{2}}\leq1}\left\Vert
\phi\right\Vert _{L^{2}}\left\vert x-y\right\vert ^{\alpha\left(
\epsilon\right)  }%
\end{align*}
due to the fact that $H^{1+\epsilon}\left(  \mathbb{T}^{2}\right)  $ is
embedded in a space of H\"{o}lder continuous functions. Therefore%
\[
\mathbb{E}\left[  \left\vert \psi\left(  x\right)  -\psi\left(  y\right)
\right\vert ^{p}\right]  \leq C_{p}C_{\epsilon}^{p/2}\left\vert x-y\right\vert
^{\frac{p\alpha\left(  \epsilon\right)  }{2}}.
\]
Taking $p$ so large that $\frac{p\alpha\left(  \epsilon\right)  }{2}>2$, we
may apply Kolmogorov regularity theorem and deduce that the random field
$\psi\left(  x\right)  $ has a version with continuous paths. It means that,
up to a modification, $\left(  1+\Delta\right)  ^{-\frac{1+\epsilon}{2}}%
\omega\in C\left(  \mathbb{T}^{2}\right)  $ with probability one, not only
$\left(  1+\Delta\right)  ^{-\frac{1+\epsilon}{2}}\omega$ belongs to
$L^{2}\left(  \mathbb{T}^{2}\right)  $. Let us summarize this fact by the
notation%
\[
\mathbb{P}\left(  \omega\in H^{-1-,\infty}\left(  \mathbb{T}^{2}\right)
\right)  =1.
\]

In spite of this additional regularity, we are not in the realm of signed
measures, that received so much attention in the case of the vorticity of 2D
fluids. One has Among the properties, one has%
\[
\mu\left(  \mathcal{M}\left(  \mathbb{T}^{2}\right)  \right)  =0.
\]
(see \cite{Grotto}, Proposition A2 for a concise proof). To understand this
property, think to the analogy with the more classical 1-dimensional case, on
$[0,\infty$) instead of the torus. In such case, white noise is the
distributional derivative of Brownian motion. It is well known that, with
probability one, trajectories of a continuous version of Brownian motion are
not of bounded variation (because they have finite non zero quadratic
variation, and are continuous). Therefore their derivatives are not signed
measures. The gap in regularity between white noise and signed measures is
thus comparable to the gap between total variation and quadratic variation.

\subsection{Colored noise}

For technical reasons, sometimes it is convenient to consider a smooth
approximation of white noise. A simple one is $\omega_{N}\left(
\theta,x\right)  =\operatorname{Re}\sum_{\left\vert n\right\vert \leq N}%
G_{n}\left(  \theta\right)  e_{n}\left(  x\right)  $ but, although the
difference is really minor, for the PDE approach followed here the use of
mollifiers looks a bit more natural. We set, for $\epsilon>0$,%
\[
\omega_{\epsilon}\left(  x\right)  =\left\langle \omega,\theta_{\epsilon
}\left(  x-\cdot\right)  \right\rangle
\]
formally written also as $\left(  \theta_{\epsilon}\ast\omega\right)  \left(
x\right)  =\int_{\mathbb{T}^{2}}\theta_{\epsilon}\left(  x-y\right)
\omega\left(  y\right)  dy$, where $\theta_{\epsilon}\left(  x\right)
=\epsilon^{-2}\theta\left(  \epsilon^{-1}x\right)  $, and $\theta$ is a smooth
probability density on $\mathbb{T}^{2}$ with a small support around $x=0$.
Assume $\theta$ symmetric. We have%
\[
\mathbb{E}\left[  \left\langle \omega_{\epsilon},\phi\right\rangle
\left\langle \omega_{\epsilon},\psi\right\rangle \right]  =\mathbb{E}\left[
\left\langle \omega,\theta_{\epsilon}\ast\phi\right\rangle \left\langle
\omega,\theta_{\epsilon}\ast\psi\right\rangle \right]  =\left\langle
\theta_{\epsilon}\ast\phi,\theta_{\epsilon}\ast\psi\right\rangle
\]%
\begin{align*}
\mathbb{E}\left[  \omega_{\epsilon}\left(  x\right)  \omega_{\epsilon}\left(
y\right)  \right]   & =\mathbb{E}\left[  \left\langle \omega,\theta_{\epsilon
}\left(  x-\cdot\right)  \right\rangle \left\langle \omega,\theta_{\epsilon
}\left(  y-\cdot\right)  \right\rangle \right]  =\left\langle \theta
_{\epsilon}\left(  x-\cdot\right)  ,\theta_{\epsilon}\left(  y-\cdot\right)
\right\rangle \\
& =\int_{\mathbb{T}^{2}}\theta_{\epsilon}\left(  x-y-z\right)  \theta
_{\epsilon}\left(  z\right)  dz=\left(  \theta_{\epsilon}\ast\theta_{\epsilon
}\right)  \left(  x-y\right)  =:\delta_{x-y}^{\epsilon}%
\end{align*}
where we have used the notation $\delta_{a}^{\epsilon}$ to denote $\left(
\theta_{\epsilon}\ast\theta_{\epsilon}\right)  \left(  a\right)  $ because it
is an approximation of the Dirac delta distribution.

Notice that $\omega_{\epsilon}\in C^{\infty}\left(  \mathbb{T}^{2}\right)  $
with probability one. Moreover, since $\left\langle \omega_{\epsilon}%
,\phi\right\rangle =\left\langle \omega,\theta_{\epsilon}\ast\phi\right\rangle
$ and $\theta_{\epsilon}\ast\phi\rightarrow\phi$ in $H^{1+\gamma}\left(
\mathbb{T}^{2}\right)  $ for every $\phi\in H^{1+\gamma}\left(  \mathbb{T}%
^{2}\right)  $ and given $\gamma>0$, we have the following statement:

\begin{lemma}
\label{lemma convergence smooth noise}$\mathbb{P}$-almost surely, for every
$\phi\in H^{1+\gamma}\left(  \mathbb{T}^{2}\right)  $ we have
\[
\lim_{\epsilon\rightarrow0}\left\langle \omega_{\epsilon},\phi\right\rangle
=\left\langle \omega,\phi\right\rangle .
\]

\end{lemma}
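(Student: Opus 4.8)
The plan is to reduce the statement to the duality identity $\langle\omega_{\epsilon},\phi\rangle=\langle\omega,\theta_{\epsilon}\ast\phi\rangle$ recorded just before the lemma (this is where the symmetry of $\theta$ enters), and then to exploit the fact, established in Section \ref{subseq WN}, that white noise almost surely lives in a space that is dual to $H^{1+\gamma}\left(\mathbb{T}^{2}\right)$. First I would fix, once and for all, the event
\[
\Omega_{0}:=\left\{  \omega:\omega\in H^{-1-\gamma}\left(\mathbb{T}^{2}\right)\right\}.
\]
Since $\mathbb{P}\left(\omega\in H^{-1-}\left(\mathbb{T}^{2}\right)\right)=1$ and $H^{-1-}\left(\mathbb{T}^{2}\right)\subset H^{-1-\gamma}\left(\mathbb{T}^{2}\right)$, the event $\Omega_{0}$ has probability one. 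On $\Omega_{0}$ the quantity $C\left(\omega\right):=\left\Vert\omega\right\Vert_{H^{-1-\gamma}}$ is finite, and because $H^{1+\gamma}\left(\mathbb{T}^{2}\right)$ and $H^{-1-\gamma}\left(\mathbb{T}^{2}\right)$ are dual to each other (Section \ref{sect notations}), the linear functional $\phi\mapsto\langle\omega,\phi\rangle$ is bounded on $H^{1+\gamma}\left(\mathbb{T}^{2}\right)$ with norm at most $C\left(\omega\right)$; concretely, by Cauchy--Schwarz with the weights $\left(1+\left\vert n\right\vert^{2}\right)^{\pm(1+\gamma)/2}$ on the Fourier side, $\left\vert\langle\omega,f\rangle\right\vert\leq C\left(\omega\right)\left\Vert f\right\Vert_{H^{1+\gamma}}$ for every $f\in H^{1+\gamma}\left(\mathbb{T}^{2}\right)$.

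Next, for an arbitrary $\omega\in\Omega_{0}$ and an arbitrary $\phi\in H^{1+\gamma}\left(\mathbb{T}^{2}\right)$, I would estimate
\[
\left\vert\langle\omega_{\epsilon},\phi\rangle-\langle\omega,\phi\rangle\right\vert
=\left\vert\langle\omega,\theta_{\epsilon}\ast\phi-\phi\rangle\right\vert
\leq C\left(\omega\right)\left\Vert\theta_{\epsilon}\ast\phi-\phi\right\Vert_{H^{1+\gamma}},
\]
and then invoke the convergence $\theta_{\epsilon}\ast\phi\rightarrow\phi$ in $H^{1+\gamma}\left(\mathbb{T}^{2}\right)$ asserted above (which itself is a routine dominated-convergence computation on Fourier coefficients, using $\left\vert\widehat{\theta}_{\epsilon}\left(n\right)\right\vert\leq1$ and $\widehat{\theta}_{\epsilon}\left(n\right)\rightarrow1$ pointwise). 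This sends the right-hand side to zero and yields $\lim_{\epsilon\rightarrow0}\langle\omega_{\epsilon},\phi\rangle=\langle\omega,\phi\rangle$.

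The only delicate point — and the step I would flag as the substantive one rather than a genuine difficulty — is the order of the quantifiers in the statement: one must produce a \emph{single} null set outside which the limit holds for \emph{every} $\phi$ simultaneously, not merely for each fixed $\phi$. This is exactly what the displayed bound delivers, because the random constant $C\left(\omega\right)$ depends only on $\omega\in\Omega_{0}$ and not on $\phi$. Thus, once $\omega\in\Omega_{0}$ is fixed, the uniform bound $C\left(\omega\right)$ reduces the claim to the deterministic convergence $\left\Vert\theta_{\epsilon}\ast\phi-\phi\right\Vert_{H^{1+\gamma}}\rightarrow0$, which holds for each individual $\phi$, and the proof is complete on the full-measure set $\Omega_{0}$. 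I expect no obstacle beyond this bookkeeping.
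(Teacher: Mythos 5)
Your proposal is correct and is essentially the paper's own argument: the paper derives the lemma directly from the duality identity $\left\langle \omega_{\epsilon},\phi\right\rangle =\left\langle \omega,\theta_{\epsilon}\ast\phi\right\rangle$, the convergence $\theta_{\epsilon}\ast\phi\rightarrow\phi$ in $H^{1+\gamma}\left(\mathbb{T}^{2}\right)$, and the almost sure membership $\omega\in H^{-1-}\left(\mathbb{T}^{2}\right)\subset H^{-1-\gamma}\left(\mathbb{T}^{2}\right)$, which is exactly your estimate $\left\vert\left\langle\omega,\theta_{\epsilon}\ast\phi-\phi\right\rangle\right\vert\leq\left\Vert\omega\right\Vert_{H^{-1-\gamma}}\left\Vert\theta_{\epsilon}\ast\phi-\phi\right\Vert_{H^{1+\gamma}}$. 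Your explicit handling of the quantifier order (one null set, independent of $\phi$, via the random constant $C\left(\omega\right)$) is a welcome clarification of a point the paper leaves implicit, but it is the same proof.
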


\subsection{Weak vorticity formulation, preliminaries}

Let us first recall the weak vorticity formulation in the case of
measure-valued vorticities. First, one rewrites equation (\ref{Euler}) against
test functions $\phi\in C^{\infty}\left(  \mathbb{T}^{2}\right)  $, using
$\operatorname{div}u=0$:%

\[
\left\langle \omega_{t},\phi\right\rangle =\left\langle \omega_{0}%
,\phi\right\rangle +\int_{0}^{t}\left\langle \omega_{s},u_{s}\cdot\nabla
\phi\right\rangle ds.
\]
Then recall that Biot-Savart law gives us%
\[
u_{t}\left(  x\right)  =\int_{\mathbb{T}^{2}}K\left(  x-y\right)  \omega
_{t}\left(  dy\right)
\]
where $K\left(  x,y\right)  $ is the Biot-Savart kernel; in full space it is
given by $K\left(  x-y\right)  =\frac{1}{2\pi}\frac{\left(  x-y\right)
^{\perp}}{\left\vert x-y\right\vert ^{2}}$; on the torus its form is less
simple but we still have $K$ smooth for $x\neq y$, $K\left(  y-x\right)
=-K\left(  x-y\right)  $,
\[
\left\vert K\left(  x-y\right)  \right\vert \leq\frac{C}{\left\vert
x-y\right\vert }%
\]
for small values of $\left\vert x-y\right\vert $. See for instance
\cite{Shochet} for details. Thus we write the weak formulation in the more
explicit form%
\[
\left\langle \omega_{t},\phi\right\rangle =\left\langle \omega_{0}%
,\phi\right\rangle +\int_{0}^{t}\int_{\mathbb{T}^{2}}\int_{\mathbb{T}^{2}%
}K\left(  x-y\right)  \nabla\phi\left(  x\right)  \omega_{s}\left(  dx\right)
\omega_{s}\left(  dy\right)  ds.
\]
Since the double space integral, when we rename $x$ by $y$ and $y$ by $x$, is
the same (the renaming doesn't affect the value), and $K\left(  y-x\right)
=-K\left(  x-y\right)  $, we get (\ref{weak vorticity formulation}). Identity
(\ref{weak vorticity formulation}) is the weak vorticity formulation of Euler
equations. Depending on the assumptions on the measures $\omega_{s}$ (whether
or not they have concentrated masses), one has to specify the value of
$K\left(  0\right)  $, which is not given a priori, and thus the value of
$H_{\phi}\left(  x,x\right)  $; in the analysis of point vortices, for
instance, it is usually set equal to zero, to avoid self-interaction. The weak
vorticity formulation of Euler equations proved to be a fundamental tool in
the investigation of limits of solutions, especially in the context of
measures. Below we shall follow a similar path in the case of white noise
distributional solutions.

\subsection{The nonlinear term for white noise
vorticity\label{section nonlinear WN}}

Our purpose now is to define
\[
\int_{\mathbb{T}^{2}}\int_{\mathbb{T}^{2}}H_{\phi}\left(  x,y\right)
\omega\left(  x\right)  \omega\left(  y\right)  dxdy
\]
when $\omega:\Xi\rightarrow C^{\infty}\left(  \mathbb{T}^{2}\right)  ^{\prime
}$ is a white noise.

Preliminarily, notice that if $\omega\in C^{\infty}\left(  \mathbb{T}%
^{2}\right)  ^{\prime}$ is a distribution, we can define a distribution
$\omega\otimes\omega\in C^{\infty}\left(  \mathbb{T}^{2}\times\mathbb{T}%
^{2}\right)  ^{\prime}$ which satisfies%
\[
\left\langle \omega\otimes\omega,\phi\otimes\psi\right\rangle =\left\langle
\omega,\phi\right\rangle \left\langle \omega,\psi\right\rangle
\]
for all $\phi,\psi\in C^{\infty}\left(  \mathbb{T}^{2}\right)  $, where
$\phi\otimes\psi$ denotes the function $\left(  \phi\otimes\psi\right)
\left(  x,y\right)  =\phi\left(  x\right)  \psi\left(  y\right)  $. The
definition of $\omega\otimes\omega$ can be based on limits of test functions
of the form $\sum_{i=1}^{n}\phi_{i}\left(  x\right)  \psi_{i}\left(  y\right)
$, or more directly on the following argument. Given $f\in C^{\infty}\left(
\mathbb{T}^{2}\times\mathbb{T}^{2}\right)  $, for each $x\in\mathbb{T}^{2}$ we
have $f\left(  x,\cdot\right)  \in C^{\infty}\left(  \mathbb{T}^{2}\right)  $,
hence $\left\langle \omega,f\left(  x,\cdot\right)  \right\rangle $ is well
defined. The function $g\left(  x\right)  =\left\langle \omega,f\left(
x,\cdot\right)  \right\rangle $ belongs to $C^{\infty}\left(  \mathbb{T}%
^{2}\right)  $, as one can verify using the continuity properties of
distributions on test functions. Then we can set%
\begin{equation}
\left\langle \omega\otimes\omega,f\right\rangle =\left\langle \omega
,g\right\rangle \text{, \qquad where }g\left(  x\right)  =\left\langle
\omega,f\left(  x,\cdot\right)  \right\rangle .\label{def tensor distrib}%
\end{equation}
If $\omega\in H^{-s}\left(  \mathbb{T}^{2}\right)  $ for some $s>0$, one can
check that $\omega\otimes\omega\in H^{-2s}\left(  \mathbb{T}^{2}%
\times\mathbb{T}^{2}\right)  $.

Let us go back to white noise. First notice that, being $\omega\in
H^{-1-}\left(  \mathbb{T}^{2}\right)  $ with probability one, we have at
least
\[
\omega\otimes\omega\in H^{-2-}\left(  \mathbb{T}^{2}\times\mathbb{T}%
^{2}\right)  \text{ with probability one.}%
\]
Hence $\int_{\mathbb{T}^{2}}\int_{\mathbb{T}^{2}}f\left(  x,y\right)
\omega\left(  x\right)  \omega\left(  y\right)  dxdy$, or more properly the
duality
\[
\left\langle \omega\otimes\omega,f\right\rangle
\]
is well defined when $f\in H^{2+}\left(  \mathbb{T}^{2}\times\mathbb{T}%
^{2}\right)  $. The question is:\ can we define
\[
\left\langle \omega\otimes\omega,H_{\phi}\right\rangle
\]
for the function $H_{\phi}$, which is smooth outside the diagonal, and
bounded, but discontinuous along the diagonal and thus not of class $H^{2+}$?
We have the following results, over which all our analysis is based. The first
result is concerned with the smooth approximations $\omega_{\epsilon}\left(
x\right)  =\left\langle \omega,\theta_{\epsilon}\left(  x-\cdot\right)
\right\rangle $, the second one with white noise.

\begin{lemma}
i) If $\omega:\Xi\rightarrow C^{\infty}\left(  \mathbb{T}^{2}\right)
^{\prime}$ is a white noise and $f$ is bounded measurable on $\mathbb{T}%
^{2}\times\mathbb{T}^{2}$, then for every $p\geq1$ there is a constant
$C_{p}>0$ such that, for all $\epsilon>0$,
\[
\mathbb{E}\left[  \left\vert \left\langle \omega_{\epsilon}\otimes
\omega_{\epsilon},f\right\rangle \right\vert ^{p}\right]  \leq C_{p}\left\Vert
f\right\Vert _{\infty}^{p}.
\]

ii)\ We have $\mathbb{E}\left[  \left\langle \omega_{\epsilon}\otimes
\omega_{\epsilon},f\right\rangle \right]  =\int_{\mathbb{T}^{2}}%
\int_{\mathbb{T}^{2}}\delta_{x-y}^{\epsilon}f\left(  x,y\right)  dxdy$.

iii)\ If $f$ is symmetric, then%
\[
\mathbb{E}\left[  \left\vert \left\langle \omega_{\epsilon}\otimes
\omega_{\epsilon},f\right\rangle -\mathbb{E}\left[  \left\langle
\omega_{\epsilon}\otimes\omega_{\epsilon},f\right\rangle \right]  \right\vert
^{2}\right]  =2\int_{\left(  \mathbb{T}^{2}\right)  ^{4}}\delta_{x_{1}-x_{2}%
}^{\epsilon}\delta_{y_{1}-y_{2}}^{\epsilon}f\left(  x_{1},y_{1}\right)
f\left(  x_{2},y_{2}\right)  dx_{1}dy_{1}dx_{2}dy_{2}.
\]

\end{lemma}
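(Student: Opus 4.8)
The plan is to read all three assertions off the Gaussian (Wick) calculus for the smooth, centred Gaussian field $\omega_{\epsilon}$, whose covariance has already been computed above as $\mathbb{E}\left[\omega_{\epsilon}(x)\omega_{\epsilon}(y)\right]=\delta_{x-y}^{\epsilon}=\left(\theta_{\epsilon}\ast\theta_{\epsilon}\right)(x-y)$. Since $\omega_{\epsilon}\in C^{\infty}(\mathbb{T}^{2})$ almost surely and $f$ is bounded measurable on the compact space $\mathbb{T}^{2}\times\mathbb{T}^{2}$, the quantity
\[
\left\langle \omega_{\epsilon}\otimes\omega_{\epsilon},f\right\rangle =\int_{\mathbb{T}^{2}}\int_{\mathbb{T}^{2}}f(x,y)\,\omega_{\epsilon}(x)\,\omega_{\epsilon}(y)\,dx\,dy
\]
is a well-defined, square-integrable random variable. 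The structural fact I would isolate first is that each $\omega_{\epsilon}(x)=\left\langle\omega,\theta_{\epsilon}(x-\cdot)\right\rangle$ belongs to the first Wiener chaos of $\omega$, so that $\left\langle\omega_{\epsilon}\otimes\omega_{\epsilon},f\right\rangle$ splits as its mean (a constant) plus a centred random variable lying in the second Wiener chaos, being an $L^{2}$-limit of linear combinations of products of two first-chaos variables.

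I would dispatch (ii) directly: by Cauchy--Schwarz $\mathbb{E}\left|\omega_{\epsilon}(x)\omega_{\epsilon}(y)\right|\leq\delta_{0}^{\epsilon}$, a finite constant independent of $(x,y)$, so Fubini applies on the finite-measure space $\mathbb{T}^{2}\times\mathbb{T}^{2}$ and yields $\mathbb{E}\left\langle\omega_{\epsilon}\otimes\omega_{\epsilon},f\right\rangle=\int\int\delta_{x-y}^{\epsilon}f(x,y)\,dx\,dy$. For (iii), which simultaneously supplies the variance bound needed in (i), I would expand $\mathbb{E}\left[\left\langle\omega_{\epsilon}\otimes\omega_{\epsilon},f\right\rangle^{2}\right]$ as a fourfold integral over $(\mathbb{T}^{2})^{4}$ of $f(x_{1},y_{1})f(x_{2},y_{2})$ against the fourth moment $\mathbb{E}\left[\omega_{\epsilon}(x_{1})\omega_{\epsilon}(y_{1})\omega_{\epsilon}(x_{2})\omega_{\epsilon}(y_{2})\right]$ and apply Wick's (Isserlis') theorem. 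This produces three pairings: the pairing $\delta_{x_{1}-y_{1}}^{\epsilon}\delta_{x_{2}-y_{2}}^{\epsilon}$ reconstructs $\left(\mathbb{E}\left\langle\omega_{\epsilon}\otimes\omega_{\epsilon},f\right\rangle\right)^{2}$ by (ii) and cancels on passing to the variance, while the two remaining pairings give $\int\delta_{x_{1}-x_{2}}^{\epsilon}\delta_{y_{1}-y_{2}}^{\epsilon}f(x_{1},y_{1})f(x_{2},y_{2})$ and $\int\delta_{x_{1}-y_{2}}^{\epsilon}\delta_{y_{1}-x_{2}}^{\epsilon}f(x_{1},y_{1})f(x_{2},y_{2})$. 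When $f$ is symmetric, the substitution $x_{2}\leftrightarrow y_{2}$ carries the second of these onto the first, so the variance is twice the stated integral.

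For (i) I would invoke that the centred part of $\left\langle\omega_{\epsilon}\otimes\omega_{\epsilon},f\right\rangle$ lives in the second Wiener chaos, where Nelson's hypercontractivity gives equivalence of all $L^{p}$ norms with a constant $c_{p}$ depending only on $p$ and on the chaos order $2$, hence independent of $\epsilon$; thus $\bigl\|\left\langle\omega_{\epsilon}\otimes\omega_{\epsilon},f\right\rangle-\mathbb{E}\left\langle\omega_{\epsilon}\otimes\omega_{\epsilon},f\right\rangle\bigr\|_{L^{p}}\leq c_{p}\bigl\|\left\langle\omega_{\epsilon}\otimes\omega_{\epsilon},f\right\rangle-\mathbb{E}\left\langle\omega_{\epsilon}\otimes\omega_{\epsilon},f\right\rangle\bigr\|_{L^{2}}$. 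It then remains to bound mean and variance uniformly in $\epsilon$. Because $\theta_{\epsilon}$ is a probability density, so is $\theta_{\epsilon}\ast\theta_{\epsilon}$, whence $\int\int\delta_{x-y}^{\epsilon}\,dx\,dy=1$ and $\left|\mathbb{E}\left\langle\omega_{\epsilon}\otimes\omega_{\epsilon},f\right\rangle\right|\leq\left\Vert f\right\Vert_{\infty}$; likewise each of the two pairing integrals is bounded by $\left\Vert f\right\Vert_{\infty}^{2}$, since integrating out one $\delta^{\epsilon}$ factor at a time returns $1$, so the variance is at most $2\left\Vert f\right\Vert_{\infty}^{2}$ (here no symmetry of $f$ is needed). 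Combining the triangle inequality, hypercontractivity, and these two bounds gives $\bigl\|\left\langle\omega_{\epsilon}\otimes\omega_{\epsilon},f\right\rangle\bigr\|_{L^{p}}\leq C_{p}\left\Vert f\right\Vert_{\infty}$ with $C_{p}$ free of $\epsilon$.

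The main obstacle is precisely the uniformity in $\epsilon$ in (i), and the two ingredients that make it succeed are worth stating explicitly: hypercontractivity reduces every $L^{p}$ estimate to the $L^{2}$ computation of (iii) with a constant that never sees the mollification scale, and the normalization $\int\theta_{\epsilon}\ast\theta_{\epsilon}=1$ renders the mean and both second-chaos pairing integrals $\epsilon$-independent. The only point requiring care is to confirm that the centred variable genuinely belongs to a single chaos, so that Nelson's estimate applies with the order-$2$ constant; this I would justify by the $L^{2}$-approximation remark noted at the outset.
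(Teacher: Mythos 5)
Your parts (ii) and (iii) follow the paper's proof essentially verbatim: Fubini for the mean, then the Isserlis--Wick expansion of the fourth moment into three pairings, with the pairing $\delta^{\epsilon}_{x_{1}-y_{1}}\delta^{\epsilon}_{x_{2}-y_{2}}$ reproducing the squared mean and the symmetry of $f$ merging the other two. For part (i), however, you take a genuinely different route. The paper reduces to integer $p$ and expands $\mathbb{E}\left[\left\vert\left\langle\omega_{\epsilon}\otimes\omega_{\epsilon},f\right\rangle\right\vert^{p}\right]$ as a $2p$-fold integral, applying Isserlis--Wick to the full product $\prod_{i=1}^{p}\omega_{\epsilon}\left(x_{i}\right)\omega_{\epsilon}\left(y_{i}\right)$: since each variable occurs in exactly one factor of any pairing, the integral of $\prod_{\left(a,b\right)\in\pi}\delta^{\epsilon}_{a-b}$ factorizes as $\left\vert\mathbb{T}^{2}\right\vert^{p}$, so each of the $\left(2p\right)!/\left(2^{p}p!\right)$ pairings contributes at most $\left\Vert f\right\Vert_{\infty}^{p}\left\vert\mathbb{T}^{2}\right\vert^{p}$ --- an argument resting on the same $\epsilon$-uniform normalization $\int\int\delta^{\epsilon}_{a-b}\,da\,db=\left\vert\mathbb{T}^{2}\right\vert$ that you exploit. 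You instead place the centred variable in the second homogeneous Wiener chaos and invoke Nelson's hypercontractivity to reduce every $L^{p}$ estimate to the $L^{2}$ computation of (iii), extended to non-symmetric $f$. Both arguments are correct; yours trades the paper's elementary but combinatorially heavier pairing count for chaos machinery, and in exchange yields explicit constants (of order $1+\sqrt{2}\left(p-1\right)$) valid for all real $p\geq2$ at once, with $1\leq p<2$ handled by Jensen. The one point you rightly flag as delicate --- that for merely bounded measurable $f$ the centred quantity genuinely lies in a single chaos --- is most cleanly settled not by Riemann sums (which need not converge for such $f$) but by viewing $\left\langle\omega_{\epsilon}\otimes\omega_{\epsilon},f\right\rangle-\mathbb{E}\left[\left\langle\omega_{\epsilon}\otimes\omega_{\epsilon},f\right\rangle\right]$ as the Bochner integral in $L^{2}\left(\Xi\right)$ of the map $\left(x,y\right)\mapsto f\left(x,y\right)\left(\omega_{\epsilon}\left(x\right)\omega_{\epsilon}\left(y\right)-\delta^{\epsilon}_{x-y}\right)$, which is continuous in $\left(x,y\right)$ and takes values in the closed subspace formed by the second homogeneous chaos; closedness of that subspace then finishes the argument. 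With this detail supplied, your proof is complete.
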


\begin{proof}
i) It is sufficient to prove the claim for integer values of $p$. We have%
\[
\left\langle \omega_{\epsilon}\otimes\omega_{\epsilon},f\right\rangle
=\int_{\mathbb{T}^{2}}\int_{\mathbb{T}^{2}}\omega_{\epsilon}\left(  x\right)
\omega_{\epsilon}\left(  y\right)  f\left(  x,y\right)  dxdy
\]%
\[
\mathbb{E}\left[  \left\vert \left\langle \omega_{\epsilon}\otimes
\omega_{\epsilon},f\right\rangle \right\vert ^{p}\right]  =\int_{\left(
\mathbb{T}^{2}\right)  ^{2p}}\mathbb{E}\left[
{\displaystyle\prod\limits_{i=1}^{p}}
\left(  \omega_{\epsilon}\left(  x_{i}\right)  \omega_{\epsilon}\left(
y_{i}\right)  \right)  \right]
{\displaystyle\prod\limits_{i=1}^{p}}
f\left(  x_{i},y_{i}\right)  dx_{1}dy_{1}\cdot\cdot\cdot dx_{p}dy_{p}.
\]
From Isserlis-Wick theorem,
\[
\mathbb{E}\left[
{\displaystyle\prod\limits_{i=1}^{p}}
\left(  \omega_{\epsilon}\left(  x_{i}\right)  \omega_{\epsilon}\left(
y_{i}\right)  \right)  \right]  =\sum_{\pi}%
{\displaystyle\prod\limits_{\left(  a,b\right)  \in\pi}}
\mathbb{E}\left[  \omega_{\epsilon}\left(  a\right)  \omega_{\epsilon}\left(
b\right)  \right]  =\sum_{\pi}%
{\displaystyle\prod\limits_{\left(  a,b\right)  \in\pi}}
\delta_{a-b}^{\epsilon}%
\]
where the sum is over all partitions $\pi$ of $\left(  x_{1},y_{1}%
,...,x_{p},y_{p}\right)  $ in pairs, generically denoted by $\left(
a,b\right)  $. Therefore
\begin{align*}
\mathbb{E}\left[  \left\vert \left\langle \omega_{\epsilon}\otimes
\omega_{\epsilon},f\right\rangle \right\vert ^{p}\right]   & =\sum_{\pi}%
\int_{\left(  \mathbb{T}^{2}\right)  ^{2p}}%
{\displaystyle\prod\limits_{\left(  a,b\right)  \in\pi}}
\delta_{a-b}^{\epsilon}%
{\displaystyle\prod\limits_{i=1}^{p}}
f\left(  x_{i},y_{i}\right)  dx_{1}dy_{1}\cdot\cdot\cdot dx_{p}dy_{p}\\
& \leq\left\Vert f\right\Vert _{\infty}^{p}\sum_{\pi}\int_{\left(
\mathbb{T}^{2}\right)  ^{2p}}%
{\displaystyle\prod\limits_{\left(  a,b\right)  \in\pi}}
\delta_{a-b}^{\epsilon}dx_{1}dy_{1}\cdot\cdot\cdot dx_{p}dy_{p}\\
& =\left\Vert f\right\Vert _{\infty}^{p}\sum_{\pi}\left(  \int_{\mathbb{T}%
^{2}}\int_{\mathbb{T}^{2}}\delta_{a-b}^{\epsilon}dadb\right)  ^{p}\\
& =\left\Vert f\right\Vert _{\infty}^{p}\sum_{\pi}\left(  \int_{\mathbb{T}%
^{2}}\int_{\mathbb{T}^{2}}\left\langle \theta_{\epsilon}\left(  a-\cdot
\right)  ,\theta_{\epsilon}\left(  b-\cdot\right)  \right\rangle dadb\right)
^{p}\\
& =\left\Vert f\right\Vert _{\infty}^{p}\sum_{\pi}\left\vert \mathbb{T}%
^{2}\right\vert ^{p}=:C_{p}\left\Vert f\right\Vert _{\infty}^{p}%
\end{align*}
(the sum has $\left(  2p\right)  !/\left(  2^{p}p!\right)  $ terms).

ii) We simply have%
\[
\mathbb{E}\left[  \left\langle \omega_{\epsilon}\otimes\omega_{\epsilon
},f\right\rangle \right]  =\int_{\mathbb{T}^{2}}\int_{\mathbb{T}^{2}%
}\mathbb{E}\left[  \omega_{\epsilon}\left(  x\right)  \omega_{\epsilon}\left(
y\right)  \right]  f\left(  x,y\right)  dxdy=\int_{\mathbb{T}^{2}}%
\int_{\mathbb{T}^{2}}\delta_{x-y}^{\epsilon}f\left(  x,y\right)  dxdy.
\]

iii)\ We just develop more carefully%
\[
\mathbb{E}\left[  \left\langle \omega_{\epsilon}\otimes\omega_{\epsilon
},f\right\rangle ^{2}\right]  =\int_{\left(  \mathbb{T}^{2}\right)  ^{4}%
}\mathbb{E}\left[
{\displaystyle\prod\limits_{i=1}^{2}}
\left(  \omega_{\epsilon}\left(  x_{i}\right)  \omega_{\epsilon}\left(
y_{i}\right)  \right)  \right]
{\displaystyle\prod\limits_{i=1}^{2}}
f\left(  x_{i},y_{i}\right)  dx_{1}dy_{1}dx_{2}dy_{2}.
\]
We have, again from Isserlis-Wick theorem,
\begin{align*}
\mathbb{E}\left[
{\displaystyle\prod\limits_{i=1}^{2}}
\left(  \omega_{\epsilon}\left(  x_{i}\right)  \omega_{\epsilon}\left(
y_{i}\right)  \right)  \right]   & =\mathbb{E}\left[  \omega_{\epsilon}\left(
x_{1}\right)  \omega_{\epsilon}\left(  x_{2}\right)  \right]  \mathbb{E}%
\left[  \omega_{\epsilon}\left(  y_{1}\right)  \omega_{\epsilon}\left(
y_{2}\right)  \right] \\
& +\mathbb{E}\left[  \omega_{\epsilon}\left(  x_{1}\right)  \omega_{\epsilon
}\left(  y_{2}\right)  \right]  \mathbb{E}\left[  \omega_{\epsilon}\left(
y_{1}\right)  \omega_{\epsilon}\left(  x_{2}\right)  \right] \\
& +\mathbb{E}\left[  \omega_{\epsilon}\left(  x_{1}\right)  \omega_{\epsilon
}\left(  y_{1}\right)  \right]  \mathbb{E}\left[  \omega_{\epsilon}\left(
x_{2}\right)  \omega_{\epsilon}\left(  y_{2}\right)  \right]
\end{align*}%
\[
=\delta_{x_{1}-x_{2}}^{\epsilon}\delta_{y_{1}-y_{2}}^{\epsilon}+\delta
_{x_{1}-y_{2}}^{\epsilon}\delta_{y_{1}-x_{2}}^{\epsilon}+\delta_{x_{1}-y_{1}%
}^{\epsilon}\delta_{x_{2}-y_{2}}^{\epsilon}.
\]
Hence, using the symmetry,%
\begin{align*}
& \mathbb{E}\left[  \left\langle \omega_{\epsilon}\otimes\omega_{\epsilon
},f\right\rangle ^{2}\right] \\
& =\int_{\left(  \mathbb{T}^{2}\right)  ^{4}}\left(  \delta_{x_{1}-x_{2}%
}^{\epsilon}\delta_{y_{1}-y_{2}}^{\epsilon}+\delta_{x_{1}-y_{2}}^{\epsilon
}\delta_{y_{1}-x_{2}}^{\epsilon}+\delta_{x_{1}-y_{1}}^{\epsilon}\delta
_{x_{2}-y_{2}}^{\epsilon}\right)  f\left(  x_{1},y_{1}\right)  f\left(
x_{2},y_{2}\right)  dx_{1}dy_{1}dx_{2}dy_{2}\\
& =2\int_{\left(  \mathbb{T}^{2}\right)  ^{4}}\delta_{x_{1}-x_{2}}^{\epsilon
}\delta_{y_{1}-y_{2}}^{\epsilon}f\left(  x_{1},y_{1}\right)  f\left(
x_{2},y_{2}\right)  dx_{1}dy_{1}dx_{2}dy_{2}+\left(  \int_{\mathbb{T}^{2}}%
\int_{\mathbb{T}^{2}}\delta_{x-y}^{\epsilon}f\left(  x,y\right)  dxdy\right)
^{2}.
\end{align*}
We have found%
\[
\mathbb{E}\left[  \left\langle \omega_{\epsilon}\otimes\omega_{\epsilon
},f\right\rangle ^{2}\right]  -\mathbb{E}\left[  \left\langle \omega
_{\epsilon}\otimes\omega_{\epsilon},f\right\rangle \right]  ^{2}%
=2\int_{\left(  \mathbb{T}^{2}\right)  ^{4}}\delta_{x_{1}-x_{2}}^{\epsilon
}\delta_{y_{1}-y_{2}}^{\epsilon}f\left(  x_{1},y_{1}\right)  f\left(
x_{2},y_{2}\right)  dx_{1}dy_{1}dx_{2}dy_{2}.
\]

\end{proof}

\begin{corollary}
\label{corollary WN}i) If $\omega:\Xi\rightarrow C^{\infty}\left(
\mathbb{T}^{2}\right)  ^{\prime}$ is a white noise and $f\in H^{2+}\left(
\mathbb{T}^{2}\times\mathbb{T}^{2}\right)  $, then for every $p\geq1$ there is
a constant $C_{p}>0$ such that
\[
\mathbb{E}\left[  \left\vert \left\langle \omega\otimes\omega,f\right\rangle
\right\vert ^{p}\right]  \leq C_{p}\left\Vert f\right\Vert _{\infty}^{p}.
\]

ii)\ We have $\mathbb{E}\left[  \left\langle \omega\otimes\omega
,f\right\rangle \right]  =\int_{\mathbb{T}^{2}}f\left(  x,x\right)  dx$.

iii)\ If $f$ is symmetric, then%
\[
\mathbb{E}\left[  \left\vert \left\langle \omega\otimes\omega,f\right\rangle
-\mathbb{E}\left[  \left\langle \omega\otimes\omega,f\right\rangle \right]
\right\vert ^{2}\right]  =2\int_{\mathbb{T}^{2}}\int_{\mathbb{T}^{2}}f\left(
x,y\right)  ^{2}dxdy.
\]

\end{corollary}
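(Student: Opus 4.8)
The plan is to obtain all three items by letting $\epsilon\to0$ in the corresponding items of the preceding Lemma, after first identifying the almost sure limit of $\langle\omega_{\epsilon}\otimes\omega_{\epsilon},f\rangle$ and checking that $f$ is bounded. The bridge between the Lemma (a statement about the mollifications $\omega_{\epsilon}$) and the Corollary (a statement about the genuine white noise $\omega$) is a uniform-in-$\epsilon$ moment bound, which lets us pass the limit inside the expectation.

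First I would record the deterministic identity
\[
\langle\omega_{\epsilon}\otimes\omega_{\epsilon},f\rangle=\langle\omega\otimes\omega,f_{\epsilon}\rangle,\qquad f_{\epsilon}:=(\theta_{\epsilon}\otimes\theta_{\epsilon})\ast f,
\]
valid for every fixed realisation of $\omega$: it follows by writing $\omega_{\epsilon}(x)=\langle\omega,\theta_{\epsilon}(x-\cdot)\rangle$, inserting this into $\int\int\omega_{\epsilon}(x)\omega_{\epsilon}(y)f(x,y)\,dx\,dy$, and using the definition (\ref{def tensor distrib}) of the tensor distribution together with the smoothness of $\omega_{\epsilon}$. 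Since $f\in H^{2+}(\mathbb{T}^{2}\times\mathbb{T}^{2})$, say $f\in H^{2+\delta}$, mollification gives $f_{\epsilon}\to f$ in $H^{2+\delta}$ (the Fourier multiplier $\widehat{\theta}(\epsilon k_{1})\widehat{\theta}(\epsilon k_{2})$ is bounded by $1$ and tends to $1$, so dominated convergence applies in the Sobolev norm). As $\omega\otimes\omega\in H^{-2-}(\mathbb{T}^{2}\times\mathbb{T}^{2})$ with probability one, hence in $H^{-2-\delta}$, continuity of the $H^{-2-\delta}\times H^{2+\delta}$ pairing yields, almost surely,
\[
\langle\omega_{\epsilon}\otimes\omega_{\epsilon},f\rangle=\langle\omega\otimes\omega,f_{\epsilon}\rangle\xrightarrow[\epsilon\to0]{}\langle\omega\otimes\omega,f\rangle .
\]
I would also note that, the domain being four dimensional, the embedding $H^{2+}(\mathbb{T}^{2}\times\mathbb{T}^{2})\subset C(\mathbb{T}^{2}\times\mathbb{T}^{2})$ holds, so $f$ has a bounded continuous representative with $\|f\|_{\infty}<\infty$, and the right-hand sides in items ii) and iii) of the Lemma converge: since $\delta_{a}^{\epsilon}=(\theta_{\epsilon}\ast\theta_{\epsilon})(a)$ is an approximate identity, dominated convergence gives $\int\int\delta_{x-y}^{\epsilon}f(x,y)\,dx\,dy\to\int f(x,x)\,dx$ and $2\int_{(\mathbb{T}^{2})^{4}}\delta_{x_{1}-x_{2}}^{\epsilon}\delta_{y_{1}-y_{2}}^{\epsilon}f(x_{1},y_{1})f(x_{2},y_{2})\to 2\int\int f(x,y)^{2}\,dx\,dy$.

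With these preparations the three items follow. For i), the Lemma gives $\mathbb{E}[|\langle\omega_{\epsilon}\otimes\omega_{\epsilon},f\rangle|^{p}]\leq C_{p}\|f\|_{\infty}^{p}$ uniformly in $\epsilon$; Fatou along any sequence $\epsilon_{n}\to0$, together with the almost sure convergence above, yields $\mathbb{E}[|\langle\omega\otimes\omega,f\rangle|^{p}]\leq C_{p}\|f\|_{\infty}^{p}$. For ii) and iii), the uniform $L^{p}$ bounds (with $p=2$ and $p=4$ respectively) make the families $\{\langle\omega_{\epsilon}\otimes\omega_{\epsilon},f\rangle\}_{\epsilon}$ and $\{(\langle\omega_{\epsilon}\otimes\omega_{\epsilon},f\rangle-\mathbb{E}[\langle\omega_{\epsilon}\otimes\omega_{\epsilon},f\rangle])^{2}\}_{\epsilon}$ uniformly integrable; combined with the almost sure convergence this upgrades to $L^{1}$ convergence of the means and of the squared centred variables. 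Passing to the limit in items ii) and iii) of the Lemma and using the approximate-identity limits of the previous paragraph gives $\mathbb{E}[\langle\omega\otimes\omega,f\rangle]=\int f(x,x)\,dx$ and $\mathbb{E}[|\langle\omega\otimes\omega,f\rangle-\mathbb{E}[\langle\omega\otimes\omega,f\rangle]|^{2}]=2\int\int f(x,y)^{2}\,dx\,dy$.

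The main obstacle is precisely the interchange of the limit $\epsilon\to0$ with the expectation and the variance. This is where the two halves of the argument must meet: the almost sure convergence coming from the Sobolev duality $\langle\omega\otimes\omega,f_{\epsilon}\rangle\to\langle\omega\otimes\omega,f\rangle$ has to be matched with the uniform-in-$\epsilon$ moment control furnished by item i) of the Lemma, so that uniform integrability legitimises passing the limit through $\mathbb{E}[\cdot]$. Once this interchange is justified, everything else reduces to the routine approximate-identity computations recorded above.
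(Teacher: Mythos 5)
Your proof is correct and takes essentially the same route as the paper's: both apply the Lemma to the mollified noise, use the identity $\left\langle \omega_{\epsilon}\otimes\omega_{\epsilon},f\right\rangle =\left\langle \omega\otimes\omega,\left(  \theta_{\epsilon}\otimes\theta_{\epsilon}\right)  \ast f\right\rangle$ to obtain almost sure convergence to $\left\langle \omega\otimes\omega,f\right\rangle$, compute the approximate-identity limits of the right-hand sides, and invoke uniform integrability (Vitali) to pass the limit through the expectations. Your extra details (the Fourier-multiplier argument for $f_{\epsilon}\rightarrow f$ in $H^{2+\delta}$ and the use of Fatou for part (i)) merely make explicit what the paper leaves implicit.
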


\begin{proof}
Notice that $f$ is continuous and thus bounded and uniformly continuous, on
$\mathbb{T}^{2}$, by Sobolev embedding theorem. Thus we may apply the previous
lemma to $\left\langle \omega_{\epsilon}\otimes\omega_{\epsilon}%
,f\right\rangle $; and we have%
\begin{align*}
\lim_{\epsilon\rightarrow0}\int_{\mathbb{T}^{2}}\int_{\mathbb{T}^{2}}%
\delta_{x-y}^{\epsilon}f\left(  x,y\right)  dxdy  & =\int_{\mathbb{T}^{2}%
}f\left(  x,x\right)  dx\\
\lim_{\epsilon\rightarrow0}\int_{\left(  \mathbb{T}^{2}\right)  ^{4}}%
\delta_{x_{1}-x_{2}}^{\epsilon}\delta_{y_{1}-y_{2}}^{\epsilon}f\left(
x_{1},y_{1}\right)  f\left(  x_{2},y_{2}\right)  dx_{1}dy_{1}dx_{2}dy_{2}  &
=\int_{\mathbb{T}^{2}}\int_{\mathbb{T}^{2}}f\left(  x_{1},y_{1}\right)
^{2}dx_{1}dy_{1}.
\end{align*}
From the identity%
\[
\left\langle \omega_{\epsilon}\otimes\omega_{\epsilon},f\right\rangle
=\int_{\mathbb{T}^{2}}\int_{\mathbb{T}^{2}}\omega_{\epsilon}\left(  x\right)
\omega_{\epsilon}\left(  y\right)  f\left(  x,y\right)  dxdy=\left\langle
\omega\otimes\omega,\left(  \theta_{\epsilon}\otimes\theta_{\epsilon}\right)
\ast f\right\rangle
\]
we see that $\mathbb{P}$-almost surely, for every $f\in H^{2+}\left(
\mathbb{T}^{2}\times\mathbb{T}^{2}\right)  $ we have
\[
\lim_{\epsilon\rightarrow0}\left\langle \omega_{\epsilon}\otimes
\omega_{\epsilon},f\right\rangle =\left\langle \omega\otimes\omega
,f\right\rangle .
\]
We can pass to the limit in all expectations written in the statement of the
corollary, due to uniform integrability of $\left\vert \left\langle
\omega_{\epsilon}\otimes\omega_{\epsilon},f\right\rangle \right\vert $ (Vitali
theorem), coming from property (i) of the lemma. The corollary then follows
from these limit properties and the lemma.
\end{proof}

\begin{remark}
In the non symmetric case we simply have%
\[
\mathbb{E}\left[  \left\vert \left\langle \omega\otimes\omega,f\right\rangle
-\mathbb{E}\left[  \left\langle \omega\otimes\omega,f\right\rangle \right]
\right\vert ^{2}\right]  =\int\int f^{2}\left(  x,y\right)  dxdy+\int%
_{\mathbb{T}^{2}}\int_{\mathbb{T}^{2}}f\left(  x,y\right)  f\left(
y,x\right)  dxdy.
\]

\end{remark}

Based on the previous key facts we can give a definition of $\left\langle
\omega\otimes\omega,H_{\phi}\right\rangle $ when $\omega$ is white noise.

\begin{theorem}
\label{Thm Cauchy}Let $\omega:\Xi\rightarrow C^{\infty}\left(  \mathbb{T}%
^{2}\right)  ^{\prime}$ be a white noise and $\phi\in C^{\infty}\left(
\mathbb{T}^{2}\right)  $ be given. Assume that $H_{\phi}^{n}\in H^{2+}\left(
\mathbb{T}^{2}\times\mathbb{T}^{2}\right)  $ are symmetric and approximate
$H_{\phi}$ in the following sense:%
\begin{align*}
\lim_{n\rightarrow\infty}\int\int\left(  H_{\phi}^{n}-H_{\phi}\right)
^{2}\left(  x,y\right)  dxdy  & =0\\
\lim_{n\rightarrow\infty}\int H_{\phi}^{n}\left(  x,x\right)  dx  & =0.
\end{align*}
Then the sequence of r.v.'s $\left\langle \omega\otimes\omega,H_{\phi}%
^{n}\right\rangle $ is a Cauchy sequence in mean square. We denote by
\[
\left\langle \omega\otimes\omega,H_{\phi}\right\rangle
\]
its limit. Moreover, the limit is the same if $H_{\phi}^{n}$ is replaced by
$\widetilde{H}_{\phi}^{n}$ with the same properties and such that
$\lim_{n\rightarrow\infty}\int\int\left(  H_{\phi}^{n}-\widetilde{H}_{\phi
}^{n}\right)  ^{2}\left(  x,y\right)  dxdy=0$.
\end{theorem}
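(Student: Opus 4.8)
The plan is to reduce everything to the mean and variance formulas of Corollary \ref{corollary WN}, exploiting that the admissible class $H^{2+}(\mathbb{T}^2\times\mathbb{T}^2)$ is a vector space and that symmetry is preserved under subtraction. The starting observation is that for any two indices $n,m$ the difference $g_{n,m}:=H_\phi^n-H_\phi^m$ is again symmetric and belongs to $H^{2+}(\mathbb{T}^2\times\mathbb{T}^2)$, so that all three conclusions of Corollary \ref{corollary WN} apply to it. By linearity of the map $f\mapsto\langle\omega\otimes\omega,f\rangle$ (clear from the approximation $\langle\omega\otimes\omega,f\rangle=\lim_\epsilon\langle\omega_\epsilon\otimes\omega_\epsilon,f\rangle$ used in that corollary), one has $\langle\omega\otimes\omega,g_{n,m}\rangle=\langle\omega\otimes\omega,H_\phi^n\rangle-\langle\omega\otimes\omega,H_\phi^m\rangle$.

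First I would split the mean square into variance plus squared mean,
\[
\mathbb{E}\left[\left|\langle\omega\otimes\omega,g_{n,m}\rangle\right|^2\right]
=\mathrm{Var}\left(\langle\omega\otimes\omega,g_{n,m}\rangle\right)
+\left(\mathbb{E}\left[\langle\omega\otimes\omega,g_{n,m}\rangle\right]\right)^2 ,
\]
and control the two pieces separately. By part (ii) of the corollary the mean equals $\int_{\mathbb{T}^2}H_\phi^n(x,x)\,dx-\int_{\mathbb{T}^2}H_\phi^m(x,x)\,dx$, and by the hypothesis $\int H_\phi^n(x,x)\,dx\to0$ each term tends to $0$, so the mean tends to $0$ as $n,m\to\infty$. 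By part (iii) (legitimately applicable since $g_{n,m}$ is symmetric) the variance equals $2\int\int(H_\phi^n-H_\phi^m)^2(x,y)\,dxdy$; since the hypothesis $\int\int(H_\phi^n-H_\phi)^2\to0$ says that $\{H_\phi^n\}$ converges, hence is Cauchy, in $L^2(\mathbb{T}^2\times\mathbb{T}^2)$, the triangle inequality gives $\|H_\phi^n-H_\phi^m\|_{L^2}\to0$ and the variance tends to $0$. Together these show $\mathbb{E}[|\langle\omega\otimes\omega,H_\phi^n\rangle-\langle\omega\otimes\omega,H_\phi^m\rangle|^2]\to0$, i.e. the sequence is Cauchy in $L^2(\Xi)$; completeness of $L^2(\Xi)$ then yields the limit we denote $\langle\omega\otimes\omega,H_\phi\rangle$.

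For the final independence-of-approximation claim I would run the identical argument on $H_\phi^n-\widetilde{H}_\phi^n$, which is once more symmetric and in $H^{2+}$. Its mean is $\int H_\phi^n(x,x)\,dx-\int\widetilde{H}_\phi^n(x,x)\,dx\to0$ and its variance is $2\int\int(H_\phi^n-\widetilde{H}_\phi^n)^2\to0$ by the extra hypothesis, so $\langle\omega\otimes\omega,H_\phi^n\rangle-\langle\omega\otimes\omega,\widetilde{H}_\phi^n\rangle\to0$ in $L^2(\Xi)$ and the two limits coincide.

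I do not expect a genuine obstacle here: the content is entirely contained in Corollary \ref{corollary WN}, and the only points needing a word of care are that $H^{2+}$ and the symmetry class are stable under subtraction (so the corollary is applicable to the differences), and that $L^2$-convergence of the approximants upgrades to the Cauchy estimate $\|H_\phi^n-H_\phi^m\|_{L^2}\to0$ that kills the variance. The mild subtlety worth flagging is that the two normalizing hypotheses play distinct roles — the $L^2$ condition controls the fluctuation (variance) while the diagonal condition $\int H_\phi^n(x,x)\,dx\to0$ controls the expectation — and both are genuinely needed, since $\mathbb{E}[\langle\omega\otimes\omega,f\rangle]=\int f(x,x)\,dx$ does not vanish for a general $f$.
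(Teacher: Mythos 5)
Your proof is correct and follows essentially the same route as the paper: both reduce the Cauchy estimate to Corollary \ref{corollary WN} parts (ii) and (iii) applied to the symmetric difference $H_\phi^n-H_\phi^m$, with the diagonal hypothesis killing the mean and the $L^2$-Cauchy property killing the variance (the paper merely packages this by centering the variables first, which is the same decomposition you write as variance plus squared mean). The treatment of the $\widetilde{H}_\phi^n$ claim also matches the paper's "proved in a similar way" remark, carried out explicitly.
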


\begin{proof}
Since $\lim_{n\rightarrow\infty}\int H_{\phi}^{n}\left(  x,x\right)  dx=0$, it
is equivalent to show that $\left\langle \omega\otimes\omega,H_{\phi}%
^{n}\right\rangle -\int H_{\phi}^{n}\left(  x,x\right)  dx$ is a Cauchy
sequence in mean square. We have%
\begin{align*}
& \mathbb{E}\left[  \left\vert \left\langle \omega\otimes\omega,H_{\phi}%
^{n}\right\rangle -\int H_{\phi}^{n}\left(  x,x\right)  dx-\left\langle
\omega\otimes\omega,H_{\phi}^{m}\right\rangle +\int H_{\phi}^{m}\left(
x,x\right)  dx\right\vert ^{2}\right] \\
& =\mathbb{E}\left[  \left\vert \left\langle \omega\otimes\omega,\left(
H_{\phi}^{n}-H_{\phi}^{m}\right)  \right\rangle -\int\left(  H_{\phi}%
^{n}-H_{\phi}^{m}\right)  \left(  x,x\right)  dx\right\vert ^{2}\right]
\end{align*}
and now we use properties (ii-iii)\ of the Corollary%
\[
=2\int_{\mathbb{T}^{2}}\int_{\mathbb{T}^{2}}\left(  H_{\phi}^{n}-H_{\phi}%
^{m}\right)  ^{2}\left(  x,y\right)  dxdy.
\]
Due to our assumption, this implies the Cauchy property. Hence $\left\langle
\omega\otimes\omega,H_{\phi}\right\rangle $ is well defined. The invariance
property is prove in a similar way.
\end{proof}

\begin{remark}
\label{remark existence}It is easy to construct a sequence $H_{\phi}%
^{n}\left(  x,y\right)  $ with the properties above. Recall that $H_{\phi
}\left(  x,y\right)  :=\frac{1}{2}K\left(  x-y\right)  \left(  \nabla
\phi\left(  x\right)  -\nabla\phi\left(  y\right)  \right)  $, where $K$
smooth for $x\neq y$, $K\left(  y-x\right)  =-K\left(  x-y\right)  $,
\[
\left\vert K\left(  x-y\right)  \right\vert \leq\frac{C}{\left\vert
x-y\right\vert }%
\]
for small values of $\left\vert x-y\right\vert $. We set, for $\epsilon>0$,%
\begin{equation}
K_{\epsilon}\left(  x\right)  =\left\{
\begin{array}
[c]{ccc}%
K\left(  x\right)  \left(  1-\theta_{\epsilon}\left(  x\right)  \right)  &
\text{for} & x\neq0\\
0 & \text{for} & x=0
\end{array}
\right. \label{def K eps}%
\end{equation}
where $\theta_{\epsilon}\left(  x\right)  =\theta\left(  \epsilon
^{-1}x\right)  $, $0\leq\theta\leq1$, $\theta$ is smooth, with support a small
ball $B\left(  0,r\right)  $, equal to 1 in $B\left(  0,r/2\right)  $; and,
given any sequence $\epsilon_{n}\rightarrow0$ we set%
\[
H_{\phi}^{n}\left(  x,y\right)  =\frac{1}{2}K_{\epsilon_{n}}\left(
x-y\right)  \left(  \nabla\phi\left(  x\right)  -\nabla\phi\left(  y\right)
\right)  .
\]
Then $H_{\phi}^{n}$ is smooth; $H_{\phi}^{n}\left(  x,x\right)  =0$ hence
$\int H_{\phi}^{n}\left(  x,x\right)  dx=0$; and \
\begin{align*}
\lim_{n\rightarrow\infty}\int\int\left(  H_{\phi}^{n}-H_{\phi}\right)
^{2}\left(  x,y\right)  dxdy  & =\lim_{n\rightarrow\infty}\int\int H_{\phi
}^{2}\left(  x,y\right)  \theta_{\epsilon_{n}}^{2}\left(  x-y\right)  dxdy\\
& \leq\lim_{n\rightarrow\infty}\int\int_{\left\vert x-y\right\vert
\leq\epsilon_{n}r}H_{\phi}^{2}\left(  x,y\right)  dxdy=0
\end{align*}
(because $H_{\phi}^{2}\left(  x,y\right)  $ is bounded above, $\theta
_{\epsilon_{n}}^{2}\leq1$, and $\theta_{\epsilon_{n}}^{2}\neq0$ only in
$B\left(  0,\epsilon_{n}r\right)  $).
\end{remark}

In fact, what we need in Definition \ref{def WN sol}\ below is a definition of
$\int_{0}^{t}\left\langle \omega_{s}\otimes\omega_{s},H_{\phi}\right\rangle
ds$ and for such purpose the previous result is not so strong; it would allow
for instance to define such integral as a Bochner integral in the Hilbert
space $L^{2}\left(  \Xi\right)  $. We prefer to have a stronger meaning and
for this purpose we refine the previous result.

\begin{theorem}
Let $\omega_{\cdot}:\Xi\times\left[  0,T\right]  \rightarrow C^{\infty}\left(
\mathbb{T}^{2}\right)  ^{\prime}$ be a measurable map with trajectories of
class $C\left(  \left[  0,T\right]  ;H^{-1-}\right)  $. Assume that
$\omega_{t}$ is a white noise at every time $t\in\left[  0,T\right]  $. Let
$H_{\phi}^{n}$ be an approximation of $H_{\phi}$ as above, of class
$H^{2+}\left(  \mathbb{T}^{2}\times\mathbb{T}^{2}\right)  $. Then the well
defined sequence of real valued process $\left\{  s\mapsto\left\langle
\omega_{s}\otimes\omega_{s},H_{\phi}^{n}\right\rangle ;s\in\left[  0,T\right]
\right\}  _{n\in\mathbb{N}}$ is a Cauchy sequence in $L^{2}\left(  \Xi
;L^{2}\left(  0,T\right)  \right)  $.
\end{theorem}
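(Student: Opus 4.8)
The plan is to reduce the $L^{2}(\Xi;L^{2}(0,T))$ estimate to a pointwise-in-time application of Corollary \ref{corollary WN}, exploiting that the diagonal terms vanish for the chosen approximation. First I would note that the map $(\theta,s)\mapsto\langle\omega_{s}(\theta)\otimes\omega_{s}(\theta),H_{\phi}^{n}\rangle$ is jointly measurable: since $H_{\phi}^{n}\in H^{2+}(\mathbb{T}^{2}\times\mathbb{T}^{2})$, the functional $\eta\mapsto\langle\eta\otimes\eta,H_{\phi}^{n}\rangle$ is continuous on $H^{-1-}(\mathbb{T}^{2})$, and composing it with the measurable, path-continuous map $(\theta,s)\mapsto\omega_{s}(\theta)$ yields joint measurability; in particular $s\mapsto\langle\omega_{s}\otimes\omega_{s},H_{\phi}^{n}\rangle$ is continuous, so the processes are genuinely well defined and each lies in $L^{2}(0,T)$ pathwise. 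Because the integrand $|\langle\omega_{s}\otimes\omega_{s},H_{\phi}^{n}-H_{\phi}^{m}\rangle|^{2}$ is nonnegative, Tonelli's theorem permits interchanging $\mathbb{E}$ and $\int_{0}^{T}ds$.

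For each fixed $s$ the random distribution $\omega_{s}$ is a white noise, so I can invoke the single-time estimates directly. The approximants of Remark \ref{remark existence} satisfy $H_{\phi}^{n}(x,x)=0$ identically, because the factor $\nabla\phi(x)-\nabla\phi(y)$ vanishes on the diagonal; hence the mean terms $\int H_{\phi}^{n}(x,x)\,dx$ drop out (for a general approximant one would instead carry the centering exactly as in Theorem \ref{Thm Cauchy}), and Corollary \ref{corollary WN}(iii), applied to the symmetric function $H_{\phi}^{n}-H_{\phi}^{m}$, gives for every $s$
\[
\mathbb{E}\left[\left|\langle\omega_{s}\otimes\omega_{s},H_{\phi}^{n}-H_{\phi}^{m}\rangle\right|^{2}\right]=2\int_{\mathbb{T}^{2}}\int_{\mathbb{T}^{2}}\left(H_{\phi}^{n}-H_{\phi}^{m}\right)^{2}(x,y)\,dx\,dy.
\]
The right-hand side does not depend on $s$, since the law of $\omega_{s}$ is the same white noise at every time.

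Integrating this constant over $[0,T]$, the squared $L^{2}(\Xi;L^{2}(0,T))$ distance between the $n$-th and $m$-th processes equals $2T\int\int(H_{\phi}^{n}-H_{\phi}^{m})^{2}\,dx\,dy$. By the first limit of Remark \ref{remark existence} the sequence $H_{\phi}^{n}$ converges to $H_{\phi}$ in $L^{2}(\mathbb{T}^{2}\times\mathbb{T}^{2})$, hence is Cauchy there, so this bound tends to $0$ as $n,m\to\infty$; this is exactly the asserted Cauchy property. I expect the only genuinely delicate point to be the joint measurability underpinning the use of Tonelli. Everything else reduces to applying the single-time identities of Corollary \ref{corollary WN} slice by slice, the key structural simplification being that the temporal correlations between $\omega_{s}$ at different times never enter: the $L^{2}(\Xi;L^{2}(0,T))$ norm only sees the diagonal second moments at each fixed $s$.
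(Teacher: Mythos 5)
Your proof is correct and follows essentially the same route as the paper's: exchange $\mathbb{E}$ and $\int_{0}^{T}ds$ by Tonelli, use that $\omega_{s}$ is a white noise at each fixed time so that Corollary \ref{corollary WN} (ii)--(iii) yields the time-independent quantity $2\int\int\left(H_{\phi}^{n}-H_{\phi}^{m}\right)^{2}\left(x,y\right)dxdy$, and conclude the Cauchy property from the $L^{2}$-convergence of the approximants. The only differences are cosmetic: you verify joint measurability explicitly (which the paper leaves implicit) and you work mainly with the zero-diagonal approximants of Remark \ref{remark existence}, while correctly noting in passing that a general approximant is handled by carrying the centering terms $\int H_{\phi}^{n}\left(x,x\right)dx$ exactly as in Theorem \ref{Thm Cauchy}, which is what the paper does.
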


\begin{proof}
The proof is the same as the one of Theorem \ref{Thm Cauchy}, but we repeat
it, due to the importance of the present result. We have%
\begin{align*}
& \mathbb{E}\left[  \int_{0}^{T}\left\vert \left\langle \omega_{s}%
\otimes\omega_{s},H_{\phi}^{n}\right\rangle -\int H_{\phi}^{n}\left(
x,x\right)  dx-\left\langle \omega_{s}\otimes\omega_{s},H_{\phi}%
^{m}\right\rangle +\int H_{\phi}^{m}\left(  x,x\right)  dx\right\vert
^{2}ds\right] \\
& =\int_{0}^{T}\mathbb{E}\left[  \int_{0}^{T}\left\vert \left\langle
\omega_{s}\otimes\omega_{s},H_{\phi}^{n}\right\rangle -\int H_{\phi}%
^{n}\left(  x,x\right)  dx-\left\langle \omega_{s}\otimes\omega_{s},H_{\phi
}^{m}\right\rangle +\int H_{\phi}^{m}\left(  x,x\right)  dx\right\vert
^{2}\right]  ds\\
& =T\cdot\mathbb{E}\left[  \left\vert \left\langle \omega_{0}\otimes\omega
_{0},\left(  H_{\phi}^{n}-H_{\phi}^{m}\right)  \right\rangle -\int\left(
H_{\phi}^{n}-H_{\phi}^{m}\right)  \left(  x,x\right)  dx\right\vert
^{2}\right]
\end{align*}
and now we use properties (ii-iii)\ of the Corollary%
\[
=2\int_{\mathbb{T}^{2}}\int_{\mathbb{T}^{2}}\left(  H_{\phi}^{n}-H_{\phi}%
^{m}\right)  ^{2}\left(  x,y\right)  dxdy.
\]
Due to our assumption, this implies the Cauchy property.
\end{proof}

\begin{definition}
\label{Def nonlin in t}Under the assumptions of the previous theorem, we
denote by
\[
\left\{  s\mapsto\left\langle \omega_{s}\otimes\omega_{s},H_{\phi
}\right\rangle ;s\in\left[  0,T\right]  \right\}
\]
or more simply by $\left\langle \omega_{\cdot}\otimes\omega_{\cdot},H_{\phi
}\right\rangle $ the process of class $L^{2}\left(  \Xi;L^{2}\left(
0,T\right)  \right)  $, limit of the sequence $\left\{  s\mapsto\left\langle
\omega_{s}\otimes\omega_{s},H_{\phi}^{n}\right\rangle ;s\in\left[  0,T\right]
\right\}  _{n\in\mathbb{N}}$.
\end{definition}

\begin{remark}
By the identification $L^{2}\left(  \Xi;L^{2}\left(  0,T\right)  \right)
=L^{2}\left(  0,T;L^{2}\left(  \Xi\right)  \right)  $, we may see
$\left\langle \omega_{\cdot}\otimes\omega_{\cdot},H_{\phi}\right\rangle $ as
an element of the class $L^{2}\left(  0,T;L^{2}\left(  \Xi\right)  \right)  $;
its value at time $s$ is, for a.e. $s$, an element of $L^{2}\left(
\Xi\right)  $; one may check that it is the same element of $L^{2}\left(
\Xi\right)  $ given by Theorem \ref{Thm Cauchy}.
\end{remark}

\subsection{The nonlinear term for modified white noise vorticity}

We may generalize a little bit the previous construction. Assume $\omega
:\Xi\rightarrow C^{\infty}\left(  \mathbb{T}^{2}\right)  ^{\prime}$ is a
random distribution with the property that
\[
\mathbb{E}\left[  \Phi\left(  \omega\right)  \right]  =\mathbb{E}\left[
\rho\left(  \omega_{WN}\right)  \Phi\left(  \omega_{WN}\right)  \right]
\]
for every measurable function $\Phi:H^{-1-}\left(  \mathbb{T}^{2}\right)
\rightarrow\lbrack0,\infty)$, where $\omega_{WN}:\Xi\rightarrow C^{\infty
}\left(  \mathbb{T}^{2}\right)  ^{\prime}$ is a white noise and $\rho
:H^{-1-}\left(  \mathbb{T}^{2}\right)  \rightarrow\lbrack0,\infty)$ is a
measurable function such that%
\[
k_{q}:=\mathbb{E}\left[  \rho^{q}\left(  \omega_{WN}\right)  \right]  <\infty
\]
for some $q>1$, and $\int\rho d\mu=1$. This is equivalent to say that the law
of $\omega$ is absolutely continuous with respect to $\mu$ with density $\rho$
satisfying $\int\rho^{q}d\mu<\infty$.

\begin{lemma}
Under the previous assumptions, if $f\in H^{2+}\left(  \mathbb{T}^{2}%
\times\mathbb{T}^{2}\right)  $, then:

i) for every $r\geq1$ there is a constant $C_{r}>0$ such that
\[
\mathbb{E}\left[  \left\vert \left\langle \omega\otimes\omega,f\right\rangle
\right\vert ^{r}\right]  \leq C_{r}\left\Vert f\right\Vert _{\infty}^{r}.
\]

ii)\ If $f$ is symmetric, then there exists a constant $C_{q}>0$ such that
\[
\mathbb{E}\left[  \left\vert \left\langle \omega\otimes\omega,f\right\rangle
-\int_{\mathbb{T}^{2}}f\left(  x,x\right)  dx\right\vert \right]  \leq
C_{q}\left\Vert f\right\Vert _{L^{2}\left(  \mathbb{T}^{2}\times\mathbb{T}%
^{2}\right)  }^{1/p}%
\]
where $p$ is the number such that $\frac{1}{p}+\frac{1}{q}=1$.
\end{lemma}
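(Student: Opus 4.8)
The plan is to reduce both statements to the white-noise case through the change-of-measure identity $\mathbb{E}[\Phi(\omega)]=\mathbb{E}[\rho(\omega_{WN})\Phi(\omega_{WN})]$ and then to invoke Corollary \ref{corollary WN}, paying for the density $\rho$ by a H\"older splitting with the conjugate exponents $q$ and $p$. Throughout I would set $Z:=\langle\omega_{WN}\otimes\omega_{WN},f\rangle-\int_{\mathbb{T}^2}f(x,x)\,dx$. By property (ii) of Corollary \ref{corollary WN} this is exactly the centering of $\langle\omega_{WN}\otimes\omega_{WN},f\rangle$ about its mean, so $\mathbb{E}[Z]=0$ and, by property (iii), $\mathbb{E}[|Z|^2]=2\|f\|_{L^2(\mathbb{T}^2\times\mathbb{T}^2)}^2$. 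Recording at the outset that the subtracted constant in part (ii) coincides with the white-noise mean is what makes property (iii) directly usable. Since the law of $\omega$ is absolutely continuous with respect to $\mu$, and $\omega\otimes\omega\in H^{-2-}$ $\mu$-a.s.\ while $f\in H^{2+}$, the duality $\langle\omega\otimes\omega,f\rangle$ is well defined for the modified law as well.

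For part (i) the argument is immediate. The change of measure and H\"older with exponents $q$ and $p$ give
\[
\mathbb{E}\left[\left|\langle\omega\otimes\omega,f\rangle\right|^r\right]=\mathbb{E}\left[\rho(\omega_{WN})\left|\langle\omega_{WN}\otimes\omega_{WN},f\rangle\right|^r\right]\le k_q^{1/q}\,\mathbb{E}\left[\left|\langle\omega_{WN}\otimes\omega_{WN},f\rangle\right|^{rp}\right]^{1/p}.
\]
Applying property (i) of Corollary \ref{corollary WN} with the real exponent $rp\ge1$ bounds the last factor by $C_{rp}^{1/p}\|f\|_\infty^{r}$, and setting $C_r:=k_q^{1/q}C_{rp}^{1/p}$ finishes it; the only thing to note is that the Corollary is available for all real exponents $\ge1$, not merely integers.

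For part (ii), the same change of measure and H\"older give $\mathbb{E}[|\langle\omega\otimes\omega,f\rangle-\int f(x,x)\,dx|]=\mathbb{E}[\rho(\omega_{WN})|Z|]\le k_q^{1/q}\|Z\|_{L^p(\Xi)}$, so the task is to estimate $\|Z\|_{L^p}$. Here lies the main obstacle: the only bound tying $Z$ to the small quantity $\|f\|_{L^2}$ is the variance of property (iii), which controls the $L^2(\Xi)$-norm, whereas the higher moments are controlled by property (i) only through $\|f\|_\infty$. I would therefore interpolate. For $p\ge2$, writing $\tfrac1p=\tfrac{\theta}{2}+\tfrac{1-\theta}{\ell}$ with $\ell$ large,
\[
\|Z\|_{L^p}\le\|Z\|_{L^2}^{\theta}\,\|Z\|_{L^{\ell}}^{1-\theta}\le C\,\|f\|_{L^2}^{\theta}\,\|f\|_\infty^{1-\theta},
\]
using $\|Z\|_{L^2}\le\sqrt2\,\|f\|_{L^2}$ from property (iii) and $\|Z\|_{L^{\ell}}\le C\|f\|_\infty$ from property (i) (the constant $\int f(x,x)\,dx$ is itself bounded by $\|f\|_\infty$). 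As $\ell\to\infty$ one has $\theta\to2/p$, whence $\|Z\|_{L^p}\lesssim\|f\|_{L^2}^{2/p}\|f\|_\infty^{1-2/p}$; since $2/p\ge1/p$, under the harmless normalization $\|f\|_\infty\le1$, $\|f\|_{L^2}\le1$ (which is all that matters in the intended application, where $f=H_\phi^n-H_\phi^m$ is uniformly bounded with $\|f\|_{L^2}\to0$) this is bounded by $C_q\|f\|_{L^2}^{1/p}$. For $1\le p\le2$ I would skip the interpolation and use $\|Z\|_{L^p}\le\|Z\|_{L^2}\le\sqrt2\,\|f\|_{L^2}\le\sqrt2\,\|f\|_{L^2}^{1/p}$ directly.

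The essential point is this $L^p$-moment control: because the density lies only in $L^q$, the conjugate exponent $p$ is forced upon us, and since the elementary Corollary offers no $\|f\|_{L^2}$-dependent control of moments above $L^2$, the fractional power $\|f\|_{L^2}^{1/p}$ is the natural outcome. One should note in passing that a stronger, linear-in-$\|f\|_{L^2}$ bound would follow from hypercontractivity in the second Wiener chaos, but the present elementary route deliberately avoids it, which is precisely why the weaker exponent $1/p$ appears and is entirely sufficient for the Cauchy estimates to come.
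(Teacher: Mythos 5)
Your proposal is correct, and its skeleton is the same as the paper's: part (i) is verbatim the paper's argument (change of measure, H\"older with exponents $q,p$, then property (i) of Corollary \ref{corollary WN} at exponent $rp$), and part (ii) likewise reduces by change of measure and H\"older to estimating $\Vert Z\Vert_{L^{p}(\Xi)}$, $Z:=\langle\omega_{WN}\otimes\omega_{WN},f\rangle-\int f(x,x)\,dx$, by playing the variance bound of property (iii) (which carries $\Vert f\Vert_{L^{2}}$) against higher moments from property (i) (which carry $\Vert f\Vert_{\infty}$). The only mechanical difference is how that last step is done: the paper applies Cauchy--Schwarz to $|Z|^{p}=|Z|\cdot|Z|^{p-1}$, getting $\mathbb{E}[|Z|^{p}]\leq\mathbb{E}[|Z|^{2}]^{1/2}\,\mathbb{E}[|Z|^{2p-2}]^{1/2}\leq C^{0}_{q}\Vert f\Vert_{L^{2}}$, which yields the exponent $1/p$ in one stroke, for all $p>1$ simultaneously, with the $\Vert f\Vert_{\infty}$-dependence absorbed into the constant $C_{q}^{0}$; you instead use Lyapunov interpolation with an auxiliary exponent $\ell$. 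That route works, but the phrase ``as $\ell\to\infty$'' should not be taken literally: the constant in $\Vert Z\Vert_{L^{\ell}}\leq C\Vert f\Vert_{\infty}$ supplied by property (i) grows with $\ell$ (for a second-chaos variable it grows roughly linearly in $\ell$), so the limit bound $\Vert Z\Vert_{L^{p}}\lesssim\Vert f\Vert_{L^{2}}^{2/p}\Vert f\Vert_{\infty}^{1-2/p}$ is not justified as stated. The fix is immediate and costs nothing in your scheme: fix a finite $\ell$, e.g.\ $\ell=2p$, which gives $\theta=1/(p-1)\geq 1/p$, and your normalization argument then delivers $C_{q}\Vert f\Vert_{L^{2}}^{1/p}$ exactly as before. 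Note also that your explicit observation that the constant unavoidably depends on $\Vert f\Vert_{\infty}$ (equivalently, that one should normalize) is accurate and in fact more candid than the paper, whose ``finite constant'' $C_{q}^{0}$ depends on $f$ through its sup norm in the same way; both readings are harmless in the intended application $f=H^{n}_{\phi}-H^{m}_{\phi}$, where sup norms are uniformly bounded and $\Vert f\Vert_{L^{2}}\to0$.
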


\begin{proof}
i) We deduce the claim from
\begin{align*}
\mathbb{E}\left[  \left\vert \left\langle \omega\otimes\omega,f\right\rangle
\right\vert ^{r}\right]   & =\mathbb{E}\left[  \rho\left(  \omega_{WN}\right)
\left\vert \left\langle \omega_{WN}\otimes\omega_{WN},f\right\rangle
\right\vert ^{r}\right] \\
& \leq\mathbb{E}\left[  \rho^{q}\left(  \omega_{WN}\right)  \right]
^{1/q}\mathbb{E}\left[  \left\vert \left\langle \omega_{WN}\otimes\omega
_{WN},f\right\rangle \right\vert ^{rp}\right]  ^{1/p}.
\end{align*}

ii) One has
\begin{align*}
\mathbb{E}\left[  \left\vert \left\langle \omega\otimes\omega,f\right\rangle
-\int_{\mathbb{T}^{2}}f\left(  x,x\right)  dx\right\vert \right]   &
=\mathbb{E}\left[  \rho\left(  \omega_{WN}\right)  \left\vert \left\langle
\omega_{WN}\otimes\omega_{WN},f\right\rangle -\int_{\mathbb{T}^{2}}f\left(
x,x\right)  dx\right\vert \right] \\
& \leq\mathbb{E}\left[  \rho^{q}\left(  \omega_{WN}\right)  \right]
^{1/q}\mathbb{E}\left[  \left\vert \left\langle \omega_{WN}\otimes\omega
_{WN},\phi\right\rangle -\int_{\mathbb{T}^{2}}f\left(  x,x\right)
dx\right\vert ^{p}\right]  ^{1/p}.
\end{align*}
Moreover,%
\begin{align*}
& \mathbb{E}\left[  \left\vert \left\langle \omega_{WN}\otimes\omega
_{WN},f\right\rangle -\int_{\mathbb{T}^{2}}f\left(  x,x\right)  dx\right\vert
^{p}\right] \\
& \leq\mathbb{E}\left[  \left\vert \left\langle \omega_{WN}\otimes\omega
_{WN},f\right\rangle -\int_{\mathbb{T}^{2}}f\left(  x,x\right)  dx\right\vert
^{2}\right]  ^{1/2}\mathbb{E}\left[  \left\vert \left\langle \omega
_{WN}\otimes\omega_{WN},f\right\rangle -\int_{\mathbb{T}^{2}}f\left(
x,x\right)  dx\right\vert ^{2p-2}\right]  ^{1/2}\\
& =C_{q}^{0}\left(  \int_{\mathbb{T}^{2}}\int_{\mathbb{T}^{2}}f\left(
x,y\right)  ^{2}dxdy\right)  ^{1/2}%
\end{align*}
where%
\[
C_{q}^{0}:=2\mathbb{E}\left[  \left\vert \left\langle \omega_{WN}\otimes
\omega_{WN},f\right\rangle -\int_{\mathbb{T}^{2}}f\left(  x,x\right)
dx\right\vert ^{2p-2}\right]  ^{1/2}%
\]
is a finite constant, due to property (i) of a previous corollary. We set
$C_{q}=k_{q}^{1/q}\left(  C_{q}^{0}\right)  ^{1/p}$.
\end{proof}

The next results are the same as those above in the white noise case except
that we have a lower order of integrability, nevertheless sufficient for our aims.

\begin{theorem}
Under the previous assumptions, assume that $H_{\phi}^{n}\in H^{2+}\left(
\mathbb{T}^{2}\times\mathbb{T}^{2}\right)  $ are symmetric and approximate
$H_{\phi}$ as in Theorem \ref{Thm Cauchy}. Then the sequence of r.v.'s
$\left\langle \omega\otimes\omega,H_{\phi}^{n}\right\rangle $ is a Cauchy
sequence in $L^{1}\left(  \Xi\right)  $. We denote by $\left\langle
\omega\otimes\omega,H_{\phi}\right\rangle $ its limit. It is the same if
$H_{\phi}^{n}$ is replaced by $\widetilde{H}_{\phi}^{n}$ with the properties
described in Theorem \ref{Thm Cauchy}.
\end{theorem}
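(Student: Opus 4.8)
The plan is to mirror the proof of Theorem \ref{Thm Cauchy}, replacing the mean-square ($L^{2}(\Xi)$) estimates used there by the weaker $L^{1}(\Xi)$ estimate furnished by part (ii) of the preceding lemma. This is exactly the point where the passage from white noise to a density-weighted law forces a loss of integrability, but as will be seen it costs nothing essential.

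First I would reduce to the centered quantities. Since $\int H_{\phi}^{n}\left(x,x\right)dx\rightarrow0$ by hypothesis, the sequence $\left\langle \omega\otimes\omega,H_{\phi}^{n}\right\rangle$ is Cauchy in $L^{1}\left(\Xi\right)$ if and only if $\left\langle \omega\otimes\omega,H_{\phi}^{n}\right\rangle -\int H_{\phi}^{n}\left(x,x\right)dx$ is. For two indices $n,m$ I would write $\left\langle \omega\otimes\omega,H_{\phi}^{n}\right\rangle -\left\langle \omega\otimes\omega,H_{\phi}^{m}\right\rangle =\left\langle \omega\otimes\omega,H_{\phi}^{n}-H_{\phi}^{m}\right\rangle$ and observe that $H_{\phi}^{n}-H_{\phi}^{m}$ is again symmetric and lies in $H^{2+}\left(\mathbb{T}^{2}\times\mathbb{T}^{2}\right)$, so the lemma applies to it directly.

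Next I would apply part (ii) of the lemma to $f=H_{\phi}^{n}-H_{\phi}^{m}$, obtaining
\[
\mathbb{E}\left[\left\vert \left\langle \omega\otimes\omega,H_{\phi}^{n}-H_{\phi}^{m}\right\rangle -\int_{\mathbb{T}^{2}}\left(H_{\phi}^{n}-H_{\phi}^{m}\right)\left(x,x\right)dx\right\vert \right]\leq C_{q}\left\Vert H_{\phi}^{n}-H_{\phi}^{m}\right\Vert _{L^{2}\left(\mathbb{T}^{2}\times\mathbb{T}^{2}\right)}^{1/p}.
\]
The hypothesis $\int\int\left(H_{\phi}^{n}-H_{\phi}\right)^{2}\rightarrow0$ says precisely that $H_{\phi}^{n}\rightarrow H_{\phi}$ in $L^{2}\left(\mathbb{T}^{2}\times\mathbb{T}^{2}\right)$, so $\left(H_{\phi}^{n}\right)$ is Cauchy in $L^{2}$ and the right-hand side tends to $0$ as $n,m\rightarrow\infty$. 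Combining this with the triangle inequality and the vanishing of the diagonal corrections $\int H_{\phi}^{n}\left(x,x\right)dx$ and $\int H_{\phi}^{m}\left(x,x\right)dx$, I conclude that $\left\langle \omega\otimes\omega,H_{\phi}^{n}\right\rangle$ is Cauchy in $L^{1}\left(\Xi\right)$; its limit exists by completeness of $L^{1}\left(\Xi\right)$ and defines $\left\langle \omega\otimes\omega,H_{\phi}\right\rangle$.

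Finally, for the independence of the approximating sequence, given another admissible $\widetilde{H}_{\phi}^{n}$ with $\int\int\left(H_{\phi}^{n}-\widetilde{H}_{\phi}^{n}\right)^{2}\rightarrow0$, I would apply the same estimate to the symmetric function $H_{\phi}^{n}-\widetilde{H}_{\phi}^{n}$: its $L^{2}$ norm tends to $0$ by assumption and the diagonal corrections vanish, so the two sequences share the same $L^{1}\left(\Xi\right)$ limit. I do not expect any genuine obstacle here. The only substantive difference from Theorem \ref{Thm Cauchy} is the use of the $L^{1}$-in-$\Xi$ bound of the lemma in place of the exact $L^{2}$ variance identity, and that weakening has already been isolated in the lemma itself, through the H\"{o}lder step against $\rho^{q}$.
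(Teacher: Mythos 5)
Your proposal is correct and follows essentially the same route as the paper's proof: reduce to the centered quantities, apply the $L^{1}(\Xi)$ bound of part (ii) of the preceding lemma to the symmetric function $H_{\phi}^{n}-H_{\phi}^{m}$, and conclude the Cauchy property from $L^{2}(\mathbb{T}^{2}\times\mathbb{T}^{2})$ convergence, with the invariance claim handled identically. No substantive difference.
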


\begin{proof}
Since $\lim_{n\rightarrow\infty}\int H_{\phi}^{n}\left(  x,x\right)  dx=0$, it
is equivalent to show that $\left\langle \omega\otimes\omega,H_{\phi}%
^{n}\right\rangle -\int H_{\phi}^{n}\left(  x,x\right)  dx$ is a Cauchy
sequence in $L^{1}\left(  \Xi\right)  $. We have%
\begin{align*}
& \mathbb{E}\left[  \left\vert \left\langle \omega\otimes\omega,H_{\phi}%
^{n}\right\rangle -\int H_{\phi}^{n}\left(  x,x\right)  dx-\left\langle
\omega\otimes\omega,H_{\phi}^{m}\right\rangle +\int H_{\phi}^{m}\left(
x,x\right)  dx\right\vert \right] \\
& =\mathbb{E}\left[  \left\vert \left\langle \omega\otimes\omega,\left(
H_{\phi}^{n}-H_{\phi}^{m}\right)  \right\rangle -\int\left(  H_{\phi}%
^{n}-H_{\phi}^{m}\right)  \left(  x,x\right)  dx\right\vert \right]
\end{align*}
and now we use property (ii)\ of the Corollary%
\[
\leq C_{q}\left\Vert H_{\phi}^{n}-H_{\phi}^{m}\right\Vert _{L^{2}\left(
\mathbb{T}^{2}\times\mathbb{T}^{2}\right)  }^{1/p}.
\]
Due to our assumptions, this implies the Cauchy property. Hence $\left\langle
\omega\otimes\omega,H_{\phi}\right\rangle $ is well defined. The invariance
property is prove in a similar way.
\end{proof}

\begin{theorem}
Let $\rho:\left[  0,T\right]  \times H^{-1-}\left(  \mathbb{T}^{2}\right)
\rightarrow\lbrack0,\infty)$ be a function such that $\int\rho_{t}^{q}d\mu\leq
C$ for some constants $C>0$, $q>1$, where $\mu$ is the law of white noise; and
$\int\rho_{t}d\mu=1$ for every $t\in\left[  0,T\right]  $. Let $\omega_{\cdot
}:\Xi\times\left[  0,T\right]  \rightarrow C^{\infty}\left(  \mathbb{T}%
^{2}\right)  ^{\prime}$ be a measurable map with trajectories of class
$C\left(  \left[  0,T\right]  ;H^{-1-}\right)  $. Assume that the law of
$\omega_{t}$ is $\rho_{t}d\mu$, at every time $t\in\left[  0,T\right]  $. Let
$H_{\phi}^{n}$ be an approximation of $H_{\phi}$ as above, of class
$H^{2+}\left(  \mathbb{T}^{2}\times\mathbb{T}^{2}\right)  $. Then the well
defined sequence of real valued process $\left\{  s\mapsto\left\langle
\omega_{s}\otimes\omega_{s},H_{\phi}^{n}\right\rangle ;s\in\left[  0,T\right]
\right\}  _{n\in\mathbb{N}}$ is a Cauchy sequence in $L^{1}\left(  \Xi
;L^{1}\left(  0,T\right)  \right)  $.
\end{theorem}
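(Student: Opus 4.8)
The plan is to follow verbatim the structure of the preceding time‑dependent white‑noise theorem, replacing the single‑time $L^{2}(\Xi)$ estimate used there by the single‑time $L^{1}(\Xi)$ estimate now available in the density case, namely part (ii) of the modified white‑noise lemma. Since by hypothesis $\lim_{n\to\infty}\int H_{\phi}^{n}(x,x)\,dx=0$, the deterministic constants $\int H_{\phi}^{n}(x,x)\,dx$ form a convergent, hence Cauchy, sequence in $L^{1}(0,T)$, so it is equivalent to prove that the centered processes $s\mapsto\langle\omega_{s}\otimes\omega_{s},H_{\phi}^{n}\rangle-\int H_{\phi}^{n}(x,x)\,dx$ are Cauchy in $L^{1}(\Xi;L^{1}(0,T))$. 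The processes are well defined and, because $H_{\phi}^{n}\in H^{2+}$, continuous in $s$ (as noted in the introduction), so they are jointly measurable in $(s,\theta)$ and Tonelli's theorem applies.

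First I would interchange expectation and time integral by Tonelli, writing
\[
\mathbb{E}\Big[\int_{0}^{T}\big|\langle\omega_{s}\otimes\omega_{s},H_{\phi}^{n}-H_{\phi}^{m}\rangle-\textstyle\int(H_{\phi}^{n}-H_{\phi}^{m})(x,x)\,dx\big|\,ds\Big]=\int_{0}^{T}\mathbb{E}\big[\,|\cdots|\,\big]\,ds .
\]
For each fixed $s$ the law of $\omega_{s}$ is $\rho_{s}\,d\mu$ with $\int\rho_{s}^{q}\,d\mu\le C$, so $\omega_{s}$ satisfies the assumptions of the modified white‑noise lemma with a density whose $q$‑th moment is bounded by $C$ independently of $s$. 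Applying part (ii) of that lemma to the symmetric, smooth, $H^{2+}$ function $f=H_{\phi}^{n}-H_{\phi}^{m}$ gives, for every $s$,
\[
\mathbb{E}\big[\,|\langle\omega_{s}\otimes\omega_{s},H_{\phi}^{n}-H_{\phi}^{m}\rangle-\textstyle\int(H_{\phi}^{n}-H_{\phi}^{m})(x,x)\,dx|\,\big]\le C_{q}\,\|H_{\phi}^{n}-H_{\phi}^{m}\|_{L^{2}(\mathbb{T}^{2}\times\mathbb{T}^{2})}^{1/p},\qquad \tfrac{1}{p}+\tfrac{1}{q}=1 .
\]

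The crucial point, and the only real obstacle, is that $C_{q}$ can be taken uniform in both $s$ and $n,m$. Its dependence on the density enters only through $\int\rho_{s}^{q}\,d\mu\le C$, which is uniform in $s$ by assumption; its dependence on $f$ enters through the factor $C_{q}^{0}$ of the lemma's proof, which by part (i) of the white‑noise corollary is controlled by $\|H_{\phi}^{n}-H_{\phi}^{m}\|_{\infty}$, and this is uniformly bounded because $|H_{\phi}^{n}|\le|H_{\phi}|$ (as $0\le 1-\theta_{\epsilon_{n}}\le 1$) while $H_{\phi}$ itself is bounded, the $1/|x-y|$ singularity of $K$ being compensated by the Lipschitz factor $\nabla\phi(x)-\nabla\phi(y)$. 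Granting this uniformity, integrating the displayed bound over $s\in[0,T]$ yields $T\,C_{q}\,\|H_{\phi}^{n}-H_{\phi}^{m}\|_{L^{2}}^{1/p}$, which tends to $0$ as $n,m\to\infty$ by the approximation hypothesis $\int\int(H_{\phi}^{n}-H_{\phi}^{m})^{2}\,dx\,dy\to0$. This establishes the Cauchy property and completes the proof; note that, unlike the homogeneous white‑noise case, one cannot simply factor out "$T$ times the value at a fixed time", precisely because the law $\rho_{s}\,d\mu$ now varies with $s$, which is why the uniform‑in‑$s$ estimate is the key ingredient.
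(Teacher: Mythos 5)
Your proof is correct and is essentially the paper's own argument: exchange expectation and time integral by Tonelli, apply the single-time $L^{1}(\Xi)$ estimate of the modified white-noise lemma (part (ii)) to $f=H_{\phi}^{n}-H_{\phi}^{m}$, uniformly in $s$ thanks to the hypothesis $\int\rho_{s}^{q}\,d\mu\le C$, and integrate over $[0,T]$ to obtain the bound $C_{q}T\left\Vert H_{\phi}^{n}-H_{\phi}^{m}\right\Vert _{L^{2}\left(\mathbb{T}^{2}\times\mathbb{T}^{2}\right)}^{1/p}$. Your additional verification that $C_{q}$ can be taken uniform in $n,m$ (via $\left\vert H_{\phi}^{n}\right\vert \le\left\vert H_{\phi}\right\vert$ and the boundedness of $H_{\phi}$) makes explicit a point the paper leaves implicit, since the constant produced in the lemma's proof does depend on $\left\Vert f\right\Vert _{\infty}$ through the higher-moment factor $C_{q}^{0}$.
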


\begin{proof}
As in previous proofs, we have%
\begin{align*}
& \mathbb{E}\left[  \int_{0}^{T}\left\vert \left\langle \omega_{s}%
\otimes\omega_{s},H_{\phi}^{n}\right\rangle -\int H_{\phi}^{n}\left(
x,x\right)  dx-\left\langle \omega_{s}\otimes\omega_{s},H_{\phi}%
^{m}\right\rangle +\int H_{\phi}^{m}\left(  x,x\right)  dx\right\vert
ds\right] \\
& =\int_{0}^{T}\mathbb{E}\left[  \left\vert \left\langle \omega_{s}%
\otimes\omega_{s},\left(  H_{\phi}^{n}-H_{\phi}^{m}\right)  \right\rangle
-\int\left(  H_{\phi}^{n}-H_{\phi}^{m}\right)  \left(  x,x\right)
dx\right\vert \right]  ds\\
& \leq C_{q}T\left\Vert H_{\phi}^{n}-H_{\phi}^{m}\right\Vert _{L^{2}\left(
\mathbb{T}^{2}\times\mathbb{T}^{2}\right)  }^{1/p}%
\end{align*}

\end{proof}

\begin{definition}
\label{Def nonlin in t rho}Under the assumptions of the previous theorem, we
denote by $\left\langle \omega_{\cdot}\otimes\omega_{\cdot},H_{\phi
}\right\rangle $ the process of class $L^{1}\left(  \Xi;L^{1}\left(
0,T\right)  \right)  $, limit of the sequence $\left\{  s\mapsto\left\langle
\omega_{s}\otimes\omega_{s},H_{\phi}^{n}\right\rangle ;s\in\left[  0,T\right]
\right\}  _{n\in\mathbb{N}}$.
\end{definition}

\subsection{Weak vorticity formulation for white noise vorticity}

\begin{definition}
\label{def WN sol}We say that a measurable map $\omega_{\cdot}:\Xi
\times\left[  0,T\right]  \rightarrow C^{\infty}\left(  \mathbb{T}^{2}\right)
^{\prime}$ with trajectories of class $C\left(  \left[  0,T\right]
;H^{-1-}\left(  \mathbb{T}^{2}\right)  \right)  $ is a white noise solution of
Euler equations if $\omega_{t}$ is a white noise at every time $t\in\left[
0,T\right]  $ and for every $\phi\in C^{\infty}\left(  \mathbb{T}^{2}\right)
$, we have the following identity $P$-a.s., uniformly in time,
\[
\left\langle \omega_{t},\phi\right\rangle =\left\langle \omega_{0}%
,\phi\right\rangle +\int_{0}^{t}\left\langle \omega_{s}\otimes\omega
_{s},H_{\phi}\right\rangle ds.
\]

\end{definition}

Here $\left\langle \omega_{t},\phi\right\rangle $ is a.s. a continuous
function of time because we assume that trajectories of\ $\omega$ are of class
$C\left(  \left[  0,T\right]  ;H^{-1-}\left(  \mathbb{T}^{2}\right)  \right)
$, and $\int_{0}^{t}\left\langle \omega_{s}\otimes\omega_{s},H_{\phi
}\right\rangle ds$ is the continuous process obtained by integration of the
$L^{2}\left(  0,T\right)  $-process provided by Definition
\ref{Def nonlin in t}.

In the case of the previous definition, in addition, we may require that
$\omega_{\cdot}$ is a time-stationary process. In a sense, the law of white
noise is an invariant measure, although we do not have a proper Markov
structure allowing us to talk about invariant measures in the classical sense.

Using Definition \ref{Def nonlin in t rho} we may generalize the previous
definition to the following case:

\begin{definition}
\label{def rho WN sol}Let $\rho:\left[  0,T\right]  \times H^{-1-}\left(
\mathbb{T}^{2}\right)  \rightarrow\lbrack0,\infty)$ satisfy $\int\rho_{t}%
^{q}d\mu\leq C$ for some constants $C>0$, $q>1$, where $\mu$ is the law of
white noise; and $\int\rho_{t}d\mu=1$ for every $t\in\left[  0,T\right]  $.
Let $\omega_{\cdot}:\Xi\times\left[  0,T\right]  \rightarrow C^{\infty}\left(
\mathbb{T}^{2}\right)  ^{\prime}$ be a measurable map with trajectories of
class $C\left(  \left[  0,T\right]  ;H^{-1-}\left(  \mathbb{T}^{2}\right)
\right)  $, such that $\omega_{t}$ has law $\rho_{t}d\mu$, for every
$t\in\left[  0,T\right]  $. We say that $\omega$ is a $\rho-$white noise
solution of Euler equations if for every $\phi\in C^{\infty}\left(
\mathbb{T}^{2}\right)  $, $t\mapsto\left\langle \omega_{t},\phi\right\rangle $
is continuous and we have the following identity $P$-a.s., uniformly in time,
\[
\left\langle \omega_{t},\phi\right\rangle =\left\langle \omega_{0}%
,\phi\right\rangle +\int_{0}^{t}\left\langle \omega_{s}\otimes\omega
_{s},H_{\phi}\right\rangle ds.
\]

\end{definition}

\section{Random point vortex dynamics\label{section random point vortices}}

Let us introduce some notations. In $\left(  \mathbb{T}^{2}\right)  ^{N}$,
denote by $\Delta_{N}$ the generalized diagonal%
\[
\Delta_{N}=\left\{  \left(  x^{1},...,x^{N}\right)  \in\left(  \mathbb{T}%
^{2}\right)  ^{N}:x^{i}=x^{j}\text{ for some }i\neq j\text{, }%
i,j=1,...,n\right\}  .
\]
Then introduce the set of unlabelled and labelled finite sequences of
different points%
\[
F_{N}\mathbb{T}^{2}=\left\{  \left(  x_{1},...,x_{n}\right)  \in\left(
\mathbb{T}^{2}\right)  ^{N}:\left(  x^{1},...,x^{N}\right)  \in\Delta_{N}%
^{c}\right\}
\]%
\[
\mathcal{L}F_{N}\mathbb{T}^{2}=\left\{  \left(  \left(  \xi_{1},x_{1}\right)
,...,\left(  \xi_{N},x_{N}\right)  \right)  \in\left(  \mathbb{R}%
\times\mathbb{T}^{2}\right)  ^{N}:\left(  x^{1},...,x^{N}\right)  \in
\Delta_{N}^{c}\right\}
\]
and the unlabelled and labelled configuration space%
\[
C_{N}\mathbb{T}^{2}=F_{N}\mathbb{T}^{2}/\Sigma_{N}%
\]%
\[
\mathcal{L}C_{N}\mathbb{T}^{2}=\mathcal{L}F_{N}\mathbb{T}^{2}/\Sigma_{N}%
\]
where $\Sigma_{N}$ is the group of permutations of coordinates. This set,
$\mathcal{L}C_{N}\mathbb{T}^{2}$, is in bijection with the set of discrete
signed measures with $n$-point support:%
\[
\mathcal{M}_{N}\left(  \mathbb{T}^{2}\right)  =\left\{  \mu\in\mathcal{M}%
\left(  \mathbb{T}^{2}\right)  :\exists X\in C_{N}\mathbb{T}^{2}:\left\vert
\mu\right\vert \left(  X^{c}\right)  =0,\mu\left(  x\right)  \neq0\text{ for
every }x\in X\right\}  .
\]
We do not use extensively these notations but they may help to formalize
further the topics we are going to describe.

\subsection{Definition for a.e. initial condition}

Consider, for every $N\in\mathbb{N}$, the finite dimensional dynamics in
$\left(  \mathbb{T}^{2}\right)  ^{N}$
\begin{equation}
\frac{dX_{t}^{i,N}}{dt}=\sum_{j=1}^{N}\frac{1}{\sqrt{N}}\xi_{j}K\left(
X_{t}^{i,N}-X_{t}^{j,N}\right)  \qquad i=1,...,N\label{vortex system}%
\end{equation}
with initial condition $\left(  X_{0}^{1,N},...,X_{0}^{N,N}\right)  \in\left(
\mathbb{T}^{2}\right)  ^{N}\backslash\Delta_{N}$, where as above $K$ is the
Biot-Savart kernel on $\mathbb{T}^{2}$; we set $K\left(  0\right)  =0$ so that
the self-interaction (namely when $j=i$) in the sum does not count. The
intensities $\xi_{1},...,\xi_{N}$ are (random) numbers of any sign. One can
consider (\ref{vortex system}) as a dynamics on the configuration space
$C_{N}\mathbb{T}^{2}$. This system corresponds also to the time-evolution of a
vorticity distribution concentrated at positions $\left(  X_{t}^{1,N}%
,...,X_{t}^{N,N}\right)  $:%
\[
\omega_{t}^{N}=\frac{1}{\sqrt{N}}\sum_{n=1}^{N}\xi_{n}\delta_{X_{t}^{n}}.
\]
There are various ways in which one can relate this finite dimensional
dynamics to Euler equations, see \cite{MarPulv}; under our assumptions made
below we shall clarify one of these connections.

In \cite{MarPulv} it is shown an example with $N=3$ and $\xi_{1},\xi_{2}%
,\xi_{3}$ of different signs such that, starting from different initial
positions $X_{0}^{1,3},X_{0}^{2,3},X_{0}^{3,3}$, in finite time $X_{t}%
^{1,3},X_{t}^{2,3},X_{t}^{3,3}$ coincide; this vortex collapse corresponds to
a blow-up in the finite dimensional dynamics (because $K\left(  x-y\right)  $
diverges as $\frac{1}{\left\vert x-y\right\vert }$ as $\left\vert
x-y\right\vert \rightarrow0$) and provokes troubles also at the level of a
PDE\ reformulation of the dynamics of $\omega_{t}^{N}$ (having in mind the
weak vorticity formulation above, the measure $\omega_{t}^{N}\left(
dx\right)  $ concentrates on the diagonal, where $H_{\phi}$ is discontinuous).
These difficulties do not happen for constant sign vortices, but they are not
interesting for our investigation. Let $\otimes_{N}Leb_{\mathbb{T}^{2}}$ be
Lebesgue measure on $\left(  \mathbb{T}^{2}\right)  ^{N}$. The main result we
use below, proved in \cite{MarPulv} is that, independently of the sign of
$\xi_{1},...,\xi_{N}$, for $\otimes_{N}Leb_{\mathbb{T}^{2}}$-a.e. initial
condition $\left(  X_{0}^{1,N},...,X_{0}^{N,N}\right)  \in\left(
\mathbb{T}^{2}\right)  ^{N}$ the positions $\left(  X_{t}^{1,N},...,X_{t}%
^{N,N}\right)  $ remain different for all times; and in addition the measure
$\otimes_{N}Leb_{\mathbb{T}^{2}}$ is invariant, in the sense that $\left(
X_{t}^{1,N},...,X_{t}^{N,N}\right)  $ is distributed as $\otimes
_{N}Leb_{\mathbb{T}^{2}}$ for all $t\geq0$. The precise statement is:

\begin{theorem}
\label{Thm foundational vortices}For every $\left(  \xi_{1},...,\xi
_{N}\right)  \in\mathbb{R}^{N}$ and for $\otimes_{N}Leb_{\mathbb{T}^{2}}$-
almost every $\left(  X_{0}^{1,N},...,X_{0}^{N,N}\right)  \in\Delta_{N}^{c}$,
there is a unique solution $\left(  X_{t}^{1,N},...,X_{t}^{N,N}\right)  $ of
system (\ref{vortex system}), with the property that $\left(  X_{t}%
^{1,N},...,X_{t}^{N,N}\right)  \in\Delta_{N}^{c}$ for all $t\geq0$. Moreover,
considering the initial condition as a random variable with distribution
$\otimes_{N}Leb_{\mathbb{T}^{2}}$, the stochastic process $\left(  X_{t}%
^{1,N},...,X_{t}^{N,N}\right)  $ is stationary, with invariant marginal law
$\otimes_{N}Leb_{\mathbb{T}^{2}}$.
\end{theorem}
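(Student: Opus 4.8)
The plan is to split the statement into three tasks: local existence and uniqueness on the open set $\Delta_{N}^{c}$, global existence for $\otimes_{N}Leb_{\mathbb{T}^{2}}$-almost every initial condition obtained by ruling out finite-time collisions, and invariance of $\otimes_{N}Leb_{\mathbb{T}^{2}}$ together with the resulting stationarity. For the first task, observe that the vector field $V(X)=(V^{1},\dots,V^{N})$ with $V^{i}(X)=\frac{1}{\sqrt{N}}\sum_{j}\xi_{j}K(X^{i}-X^{j})$ is smooth on the open set $\Delta_{N}^{c}$, because $K$ is smooth off the origin and the self-term vanishes by the convention $K(0)=0$. By the Cauchy--Lipschitz theorem every initial condition in $\Delta_{N}^{c}$ admits a unique maximal solution; since $\mathbb{T}^{2}$ is compact the trajectory cannot escape to infinity, so the only obstruction to global existence is that it reaches the diagonal $\Delta_{N}$, i.e.\ a vortex collision, in finite time.

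Next I would show that $V$ is divergence-free on $\Delta_{N}^{c}$. Indeed $K$ is the velocity field generated by a single unit point vortex and is therefore incompressible away from its location, so $(\nabla\cdot K)(x)=0$ for $x\neq0$; hence
\[
\operatorname{div}_{(\mathbb{T}^{2})^{N}}V=\sum_{i=1}^{N}\nabla_{X^{i}}\cdot V^{i}=\frac{1}{\sqrt{N}}\sum_{i\neq j}\xi_{j}(\nabla\cdot K)(X^{i}-X^{j})=0\quad\text{on }\Delta_{N}^{c},
\]
the term $j=i$ contributing nothing since $K(0)=0$. By Liouville's theorem the local flow $\Phi_{t}$ generated by $V$ preserves $\otimes_{N}Leb_{\mathbb{T}^{2}}$ wherever it is defined.

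The technical heart is to prove that the set of initial conditions whose maximal solution collides in finite time is $\otimes_{N}Leb_{\mathbb{T}^{2}}$-negligible. Write $W_{\delta}=\{X\in\Delta_{N}^{c}:\min_{i\neq j}|X^{i}-X^{j}|<\delta\}$ for the $\delta$-collar of the diagonal; since $\mathbb{T}^{2}$ is two-dimensional, $Leb(W_{\delta})\leq C_{N}\delta^{2}$. Fix $T>0$ and let $B_{\delta,T}$ be the set of initial data whose trajectory enters $W_{\delta}$ before time $T$. The collision set within $[0,T]$ is contained in $\bigcap_{\delta>0}B_{\delta,T}$, so it suffices to show $Leb(B_{\delta,T})\to0$ as $\delta\to0$. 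The crucial dynamical fact making this true is that $K(x)$ is orthogonal to $x$: the singular $O(1/\delta)$ part of the mutual interaction of a close pair is purely rotational and does not change their separation, so the rate of approach of two near-colliding vortices is driven only by the regular influence of the remaining, well-separated vortices. Combining this control on the approach rate with the measure preservation of $\Phi_{t}$ and the bound on $Leb(W_{\delta})$ yields $Leb(B_{\delta,T})\to0$; taking a countable union over $T\to\infty$ gives the almost-everywhere global existence. This is precisely the classical no-collision theorem of Marchioro and Pulvirenti, whose argument I would follow.

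Finally, granting a.e.\ global existence, $\Phi_{t}$ is an almost-everywhere defined, measure-preserving flow on $((\mathbb{T}^{2})^{N},\otimes_{N}Leb_{\mathbb{T}^{2}})$ obeying the group law $\Phi_{t+s}=\Phi_{t}\circ\Phi_{s}$. Taking the initial condition distributed as $\otimes_{N}Leb_{\mathbb{T}^{2}}$, the law of $(X_{t}^{1,N},\dots,X_{t}^{N,N})$ is $(\Phi_{t})_{*}(\otimes_{N}Leb_{\mathbb{T}^{2}})=\otimes_{N}Leb_{\mathbb{T}^{2}}$ for every $t$; moreover, for any times $t_{1},\dots,t_{k}$ and any shift $h$, the vector $(X_{t_{1}+h}^{\cdot},\dots,X_{t_{k}+h}^{\cdot})=(\Phi_{t_{1}},\dots,\Phi_{t_{k}})(\Phi_{h}X_{0})$ has the same law as $(X_{t_{1}}^{\cdot},\dots,X_{t_{k}}^{\cdot})$ because $\Phi_{h}$ pushes $\otimes_{N}Leb_{\mathbb{T}^{2}}$ to itself, proving stationarity. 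The main obstacle is clearly the collision-avoidance step: the perpendicularity of $K$ is what makes the statement true, but turning it into a rigorous measure estimate in the presence of possible simultaneous near-collisions of several vortices is the delicate analysis carried out in \cite{MarPulv}.
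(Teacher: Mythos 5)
The paper offers no proof of this theorem to compare against: it is stated as a quoted result of Marchioro and Pulvirenti, with \cite{MarPulv} cited immediately before the statement. Your outline reconstructs the classical proof of that cited result, and its skeleton is correct: smoothness of the velocity field on $\Delta_{N}^{c}$ plus Cauchy--Lipschitz for local well-posedness; $\nabla\cdot K=0$ away from the origin, hence a divergence-free field on $\left(\mathbb{T}^{2}\right)^{N}$ and Liouville-type preservation of $\otimes_{N}Leb_{\mathbb{T}^{2}}$ by the local flow; a measure estimate excluding finite-time collisions for a.e.\ datum; and stationarity from the group law of an a.e.-defined measure-preserving flow (this last step you do carry out completely, and it is the same mechanism the paper exploits in Proposition \ref{propos point vortices}). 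The one caveat is that the technical heart is thinner than your phrasing suggests: $Leb(B_{\delta,T})\rightarrow0$ does not follow by simply ``combining'' the perpendicularity of $K$ with the collar bound $Leb(W_{\delta})\leq C_{N}\delta^{2}$, since near-collisions can involve several pairs simultaneously and the measure-preservation of the flow only controls time-marginals, not the running minimum of the inter-vortex distance. The rigorous argument in \cite{MarPulv} works with a regularized dynamics and a Lyapunov functional of the form $\sum_{i<j}\left\vert \log\left\vert X^{i}-X^{j}\right\vert \right\vert$, whose time derivative loses its singular part exactly because $K(x)\perp x$, and whose expectation under the invariant Lebesgue law is finite since the logarithmic singularity is integrable in two dimensions; a Gronwall--Chebyshev argument then yields the vanishing measure. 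You acknowledge this and explicitly defer that step to \cite{MarPulv}, which is legitimate --- it is precisely what the paper does for the entire theorem --- so your proposal is a correct roadmap whose hardest segment is delegated to the same reference the paper relies on.
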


When this occurs, the measure-valued process $\omega_{t}^{N}=\frac{1}{\sqrt
{N}}\sum_{n=1}^{N}\xi_{n}\delta_{X_{t}^{n}}$ satisfies, for every $\phi\in
C^{\infty}\left(  \mathbb{T}^{2}\right)  $, the identity%
\begin{align*}
\frac{d}{dt}\left\langle \omega_{t}^{N},\phi\right\rangle  & =\frac{1}%
{\sqrt{N}}\sum_{n=1}^{N}\xi_{n}\frac{d}{dt}\phi\left(  X_{t}^{n}\right)
=\frac{1}{\sqrt{N}}\sum_{n=1}^{N}\xi_{n}\nabla\phi\left(  X_{t}^{n}\right)
\cdot\sum_{j=1}^{N}\frac{1}{\sqrt{N}}\xi_{j}K\left(  X_{t}^{n}-X_{t}%
^{j}\right) \\
& =\int_{\mathbb{T}^{2}}\int_{\mathbb{T}^{2}}\nabla\phi\left(  x\right)  \cdot
K\left(  x-y\right)  \omega_{t}^{N}\left(  dx\right)  \omega_{t}^{N}\left(
dy\right)
\end{align*}
and therefore%
\[
\left\langle \omega_{t}^{N},\phi\right\rangle =\left\langle \omega_{0}%
^{N},\phi\right\rangle +\int_{0}^{t}\left\langle \omega_{s}^{N}\otimes
\omega_{s}^{N},H_{\phi}\right\rangle ds.
\]

\subsection{Random point vortices, at time $t=0$, converging to white noise,
and their time evolution\label{section random vortices}}

On a probability space $\left(  \Xi,\mathcal{F},\mathbb{P}\right)  $, let
$\left(  \xi_{n}\right)  $ be an i.i.d. sequence of $N\left(  0,1\right)  $
r.v.'s and $\left(  X_{0}^{n}\right)  $ be an i.i.d. sequence of
$\mathbb{T}^{2}$-valued r.v.'s, independent of $\left(  \xi_{n}\right)  $ and
uniformly distributed. Denote by
\[
\lambda_{N}^{0}:=\otimes_{N}\left(  N\left(  0,1\right)  \otimes
Leb_{\mathbb{T}^{2}}\right)
\]
the law of the random vector%
\[
\left(  \left(  \xi_{1},X_{0}^{1}\right)  ,...,\left(  \xi_{N},X_{0}%
^{N}\right)  \right)  .
\]
For every $N\in\mathbb{N}$, let us consider also the measure-valued vorticity
field%
\[
\omega_{0}^{N}=\frac{1}{\sqrt{N}}\sum_{n=1}^{N}\xi_{n}\delta_{X_{0}^{n}}.
\]

\begin{remark}
\label{remark main measure on vortices}Since product Lebesgue measure does not
charge the generalized diagonal $\Delta_{N}$, the law $\lambda_{N}^{0}$ can be
seen as a probability measure on the set of labelled ordered different points
$\mathcal{L}F_{N}\mathbb{T}^{2}$ (see the beginning of Section
\ref{section random point vortices}). It is an exchangeable measure (namely
invariant by permutations) and thus it induces a probability measure on the
labelled configuration space $\mathcal{L}C_{N}\mathbb{T}^{2}$. It also induces
a probability measure on $\mathcal{M}_{N}\left(  \mathbb{T}^{2}\right)  $ or,
what we need below, on $H^{-1-}\left(  \mathbb{T}^{2}\right)  $. We shall
denote this induced measure on discrete measures or on distributions by
$\mu_{N}^{0}\left(  d\omega\right)  $. Defined the measurable map
$\mathcal{T}_{N}:\left(  \mathbb{R}\times\mathbb{T}^{2}\right)  ^{N}%
\rightarrow H^{-1-}\left(  \mathbb{T}^{2}\right)  $ as%
\[
\left(  \left(  \xi_{1},X_{0}^{1}\right)  ,...,\left(  \xi_{N},X_{0}%
^{N}\right)  \right)  \overset{\mathcal{T}_{N}}{\mapsto}\frac{1}{\sqrt{N}}%
\sum_{n=1}^{N}\xi_{n}\delta_{X_{0}^{n}}%
\]
we have (with the push-forward notation)%
\[
\mu_{N}^{0}=\left(  \mathcal{T}_{N}\right)  _{\ast}\lambda_{N}^{0}.
\]

\end{remark}

The random distribution $\omega_{0}^{N}$ is centered, becuase%
\[
\mathbb{E}\left[  \xi_{n}\left\langle \delta_{X_{0}^{n}},\varphi\right\rangle
\right]  =0
\]
(true since $\xi_{n}$ and $\left\langle \delta_{X_{0}^{n}},\varphi
\right\rangle $ are independent and $\xi_{n}$ is centered). Let us denote by
$Q_{N}$ the covariance operator of $\omega_{0}^{N}$, defined as%
\[
\left\langle Q_{N}\varphi,\psi\right\rangle =\mathbb{E}\left[  \left\langle
\omega_{0}^{N},\varphi\right\rangle \left\langle \omega_{0}^{N},\psi
\right\rangle \right]
\]
for all $\varphi,\psi\in C^{\infty}\left(  \mathbb{T}^{2}\right)  $. We have
\begin{align*}
\left\langle Q_{N}\varphi,\psi\right\rangle  & =\frac{1}{N}\sum_{n,m=1}%
^{N}\mathbb{E}\left[  \xi_{n}\xi_{m}\left\langle \delta_{X_{0}^{n}}%
,\varphi\right\rangle \left\langle \delta_{X_{0}^{m}},\psi\right\rangle
\right] \\
& =\frac{1}{N}\sum_{n=1}^{N}\mathbb{E}\left[  \xi_{n}^{2}\right]
\mathbb{E}\left[  \left\langle \delta_{X_{0}^{n}},\varphi\right\rangle
\left\langle \delta_{X_{0}^{n}},\psi\right\rangle \right] \\
& =\mathbb{E}\left[  \xi_{1}^{2}\right]  \mathbb{E}\left[  \varphi\left(
X_{0}^{1}\right)  \psi\left(  X_{0}^{1}\right)  \right] \\
& =\int_{\mathbb{T}^{2}}\varphi\left(  x\right)  \psi\left(  x\right)  dx
\end{align*}
hence $\omega_{0}^{N}$ has the same covariance as white noise, but obviously
it is not Gaussian. However, a Hilbert-valued version of the Central Limit
Theorem gives us

\begin{proposition}
\label{Prop CLT}If $\omega_{WN}$ denotes white noise, then
\[
\omega_{0}^{N}\overset{Law}{\rightharpoonup}\omega_{WN}%
\]
where convergence takes place in $H^{-1-\delta}$ for every $\delta>0$.
\end{proposition}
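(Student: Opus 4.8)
The plan is to view $\omega_0^N$ as a normalized sum of i.i.d.\ random elements of a fixed separable Hilbert space, apply a central limit theorem there, and identify the Gaussian limit with white noise by matching second moments. Fix $\delta>0$ and set $H:=H^{-1-\delta}\left(\mathbb{T}^2\right)$, a separable Hilbert space. Writing $Y_n:=\xi_n\delta_{X_0^n}$ we have $\omega_0^N=\frac{1}{\sqrt N}\sum_{n=1}^N Y_n$, and the $Y_n$ are i.i.d.\ $H$-valued random variables: indeed $\widehat{\delta_x}(n)=\langle\delta_x,e_n\rangle=e_n(x)$ has modulus one, so
\[
\left\Vert\delta_x\right\Vert_H^2=\sum_{n\in\mathbb{Z}^2}\left(1+\left\vert n\right\vert^2\right)^{-1-\delta}=:C_\delta<\infty,
\]
the finiteness being precisely the convergence of the two-dimensional lattice sum for exponent strictly larger than $1$ (this is where $\delta>0$ is essential). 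Moreover $x\mapsto\delta_x$ is continuous into $H$, so each $Y_n$ is a genuine $H$-valued (strongly measurable) random variable.

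First I would record the two moment facts. Since $\xi_1$ and $X_0^1$ are independent with $\mathbb{E}\left[\xi_1\right]=0$, the Bochner integral gives $\mathbb{E}\left[Y_1\right]=\mathbb{E}\left[\xi_1\right]\mathbb{E}\left[\delta_{X_0^1}\right]=0$, so the summands are centered, while $\mathbb{E}\left\Vert Y_1\right\Vert_H^2=\mathbb{E}\left[\xi_1^2\right]C_\delta=C_\delta<\infty$. Because $H$ is a separable Hilbert space, where a finite second moment of a centered i.i.d.\ sequence is well known to suffice for the central limit theorem, I obtain that $\frac{1}{\sqrt N}\sum_{n=1}^N Y_n$ converges in distribution on $H$ to the centered Gaussian measure $N(0,S)$, with $S$ the covariance operator of $Y_1$. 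In particular the limit law is Gaussian and the family $\left\{\omega_0^N\right\}_N$ is automatically tight in $H$.

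It then remains to identify $N(0,S)$ with the law of white noise. Both are centered Gaussian measures on $H$ (white noise is supported on $H^{-1-}\subset H$ and is Gaussian there), so they coincide as soon as their covariances agree. The covariance of $\omega_0^N$ was computed just before the statement to satisfy $\langle Q_N\varphi,\psi\rangle=\langle\varphi,\psi\rangle$ for all $\varphi,\psi\in C^\infty\left(\mathbb{T}^2\right)$, independently of $N$; this is exactly the white-noise covariance (\ref{def WN}). Since $S$ is the covariance of $Y_1$, and hence of $\omega_0^N=\frac{1}{\sqrt N}\sum Y_n$ for every $N$, the two Gaussian measures share the same covariance and therefore agree. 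This yields $\omega_0^N\overset{Law}{\rightharpoonup}\omega_{WN}$ in $H^{-1-\delta}$, and as $\delta>0$ was arbitrary the proposition follows.

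The step I expect to require the most care is the one hidden in the invocation of the Hilbert-space CLT, namely tightness of $\left\{\omega_0^N\right\}$ in $H^{-1-\delta}$. If one prefers a self-contained argument avoiding the abstract theorem, I would instead prove convergence of finite-dimensional distributions by the scalar CLT applied to $\langle\omega_0^N,\varphi\rangle=\frac{1}{\sqrt N}\sum_{n=1}^N\xi_n\varphi(X_0^n)$, a normalized sum of i.i.d.\ centered variables of variance $\left\Vert\varphi\right\Vert_{L^2}^2$, and obtain tightness from the uniform bound $\sup_N\mathbb{E}\left\Vert\omega_0^N\right\Vert_{H^{-1-\delta/2}}^2=C_{\delta/2}<\infty$ together with the compact embedding $H^{-1-\delta/2}\left(\mathbb{T}^2\right)\hookrightarrow H^{-1-\delta}\left(\mathbb{T}^2\right)$. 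Either way the essential and genuinely borderline point is that the Dirac masses carry finite $H^{-1-\delta}$-energy only because $\delta>0$, which is precisely why the limit lives in $H^{-1-}$ and not in $H^{-1}$.
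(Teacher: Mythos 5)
Your proof is correct and follows essentially the same route as the paper: both invoke the Hilbert-space central limit theorem for i.i.d.\ centered summands $\xi_{n}\delta_{X_{0}^{n}}$ with finite second moment in $H^{-1-\delta}\left(\mathbb{T}^{2}\right)$, and identify the Gaussian limit with white noise via the covariance computation $\left\langle Q_{N}\varphi,\psi\right\rangle=\left\langle\varphi,\psi\right\rangle$ preceding the statement. The only cosmetic difference is that you verify $\mathbb{E}\left[\left\Vert\xi_{n}\delta_{X_{0}^{n}}\right\Vert_{H^{-1-\delta}}^{2}\right]<\infty$ by the exact Fourier sum $\sum_{n\in\mathbb{Z}^{2}}\left(1+\left\vert n\right\vert^{2}\right)^{-1-\delta}$, whereas the paper bounds $\left\Vert\delta_{x}\right\Vert_{H^{-1-\delta}}$ by duality and the Sobolev embedding $H^{1+\delta}\left(\mathbb{T}^{2}\right)\subset C\left(\mathbb{T}^{2}\right)$.
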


\begin{proof}
The condition for the validity of this claim, a part from the computation
above on the covariance, is that the space is Hilbert and the second moment is
finite:
\begin{equation}
\mathbb{E}\left[  \left\Vert \xi_{n}\delta_{X_{0}^{n}}\right\Vert
_{H^{-1-\delta}}^{2}\right]  <\infty\label{bound for CLT}%
\end{equation}
(see \cite{LedTal}). Condition (\ref{bound for CLT}) is true because
$\mathbb{E}\left[  \left\Vert \xi_{n}\delta_{X_{0}^{n}}\right\Vert
_{H^{-1-\delta}}^{2}\right]  =\mathbb{E}\left[  \left\Vert \delta_{X_{0}^{n}%
}\right\Vert _{H^{-1-\delta}}^{2}\right]  $ and%
\begin{align*}
\left\Vert \delta_{X_{0}^{n}}\right\Vert _{H^{-1-\delta}}  & =\sup_{\left\Vert
\phi\right\Vert _{H^{1+\delta}}\leq1}\left\langle \delta_{X_{0}^{n}}%
,\phi\right\rangle =\sup_{\left\Vert \phi\right\Vert _{H^{1+\delta}}\leq1}%
\phi\left(  X_{0}^{n}\right) \\
& \leq\sup_{\left\Vert \phi\right\Vert _{H^{1+\delta}}\leq1}\left\Vert
\phi\right\Vert _{\infty}\leq C\sup_{\left\Vert \phi\right\Vert _{H^{1+\delta
}}\leq1}\left\Vert \phi\right\Vert _{H^{1+\delta}}=C
\end{align*}
where we have used Sobolev embedding theorem $H^{1+\delta}\left(
\mathbb{T}^{2}\right)  \subset C\left(  \mathbb{T}^{2}\right)  $.
\end{proof}

Obviously, using proper versions of the Central Limit Theorem, one can provide
much more general random point vortices that converge in law to $\omega_{WN}$;
our aim here is not the generality but the construction of an approximation
scheme for our main existence theorem.

As a consequence of Theorem \ref{Thm foundational vortices} we have:

\begin{proposition}
\label{propos point vortices}Consider the vortex dynamics with random
intensities $\left(  \xi_{1},...,\xi_{N}\right)  $ and random initial
positions $\left(  X_{0}^{1},...,X_{0}^{N}\right)  $ distributed as
$\lambda_{N}^{0}$. For a.e. value of $\left(  \left(  \xi_{1},X_{0}%
^{1}\right)  ,...,\left(  \xi_{N},X_{0}^{N}\right)  \right)  $ the dynamics
$\left(  X_{t}^{1,N},...,X_{t}^{N,N}\right)  $ is well defined in $\Delta
_{N}^{c}$ for all $t\geq0$, and the associated measure-valued vorticity
$\omega_{t}^{N}$ satisfies the weak vorticity formulation. The stochastic
process $\omega_{t}^{N}$ is stationary in time and space-homogeneous; in
particular the law of $\left(  \left(  \xi_{1},X_{t}^{1}\right)  ,...,\left(
\xi_{N},X_{t}^{N}\right)  \right)  $ is $\lambda_{N}^{0}$ at any time $t\geq0$.
\end{proposition}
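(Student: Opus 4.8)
The plan is to derive every assertion from Theorem \ref{Thm foundational vortices} by conditioning on the intensities $\xi=\left(\xi_{1},\ldots,\xi_{N}\right)$ and integrating, exploiting that $\xi$ is frozen in time while only the positions evolve. For the well-posedness statement, recall that Theorem \ref{Thm foundational vortices} gives, for \emph{every} fixed $\xi\in\mathbb{R}^{N}$, a set $G_{\xi}\subset\Delta_{N}^{c}$ with $\otimes_{N}Leb_{\mathbb{T}^{2}}\left(G_{\xi}^{c}\right)=0$ such that for initial positions in $G_{\xi}$ the solution of \eqref{vortex system} exists and remains in $\Delta_{N}^{c}$ for all $t\geq0$. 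Setting $G:=\left\{\left(\xi,X_{0}\right):X_{0}\in G_{\xi}\right\}$, one checks that $G$ is measurable (this follows from measurable dependence of the ODE flow and of the collision time on the data), and Fubini's theorem, integrating first in $X_{0}$ against $\otimes_{N}Leb_{\mathbb{T}^{2}}$ and then in $\xi$ against $\otimes_{N}N\left(0,1\right)$, yields $\lambda_{N}^{0}\left(G^{c}\right)=0$. Hence for $\lambda_{N}^{0}$-a.e.\ datum the dynamics is well defined in $\Delta_{N}^{c}$ for all times.

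On $G$ the weak vorticity formulation is immediate: the chain-rule computation displayed right after Theorem \ref{Thm foundational vortices} applies verbatim and gives
\[
\left\langle\omega_{t}^{N},\phi\right\rangle=\left\langle\omega_{0}^{N},\phi\right\rangle+\int_{0}^{t}\left\langle\omega_{s}^{N}\otimes\omega_{s}^{N},H_{\phi}\right\rangle ds
\]
for every $\phi\in C^{\infty}\left(\mathbb{T}^{2}\right)$. The only point to note is that $H_{\phi}$ is always evaluated off the diagonal, which is exactly guaranteed by the positions remaining distinct, so that the self-interaction convention $K\left(0\right)=0$ does not enter.

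For time-stationarity and the identification of the marginal, Theorem \ref{Thm foundational vortices} asserts that, for each fixed $\xi$, the position process $\left(X_{t}^{1},\ldots,X_{t}^{N}\right)$ is stationary with marginal $\otimes_{N}Leb_{\mathbb{T}^{2}}$. Since $\xi$ does not evolve, the joint process $\left(\left(\xi_{n},X_{t}^{n}\right)\right)_{n=1}^{N}$ inherits stationarity by conditioning: for any shift $h$ and bounded measurable $F$ one writes $\mathbb{E}\left[F\left(\xi,X_{\cdot+h}\right)\right]=\mathbb{E}\left[\mathbb{E}\left[F\left(\xi,X_{\cdot+h}\right)\mid\xi\right]\right]$ and replaces $X_{\cdot+h}$ by $X_{\cdot}$ inside the conditional expectation using conditional stationarity. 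Evaluating at a single time $t$, and using that conditionally on $\xi$ one has $X_{t}\sim\otimes_{N}Leb_{\mathbb{T}^{2}}$ while $\xi\sim\otimes_{N}N\left(0,1\right)$, the joint law of $\left(\left(\xi_{n},X_{t}^{n}\right)\right)$ is the product $\otimes_{N}\left(N\left(0,1\right)\otimes Leb_{\mathbb{T}^{2}}\right)=\lambda_{N}^{0}$ for every $t\geq0$. Space-homogeneity follows separately from the equivariance of \eqref{vortex system} under a common spatial translation $\tau_{a}$, $a\in\mathbb{T}^{2}$ (the kernel $K\left(x-y\right)$ depends only on the difference, so translating all initial positions by $a$ translates the whole trajectory by $a$), together with the translation-invariance of $\otimes_{N}Leb_{\mathbb{T}^{2}}$; pushing the initial law forward shows the law of $\omega_{t}^{N}$ is $\tau_{a}$-invariant.

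The main obstacle I anticipate is purely the bookkeeping required to pass from the per-$\xi$ statements of Theorem \ref{Thm foundational vortices} to joint statements about $\left(\xi,X_{\cdot}\right)$: one must verify measurability of $G$ and of the solution map so that the Fubini and conditioning arguments are legitimate, and one must phrase conditional stationarity at the level of finite-dimensional distributions (or of the trajectory law) rather than just one-time marginals. There is no new analytic difficulty, since the entire quantitative content — in particular the non-collision and invariance of $\otimes_{N}Leb_{\mathbb{T}^{2}}$ — is already contained in Theorem \ref{Thm foundational vortices}.
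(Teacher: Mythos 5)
Your proposal is correct and follows essentially the same route as the paper: both deduce everything from Theorem \ref{Thm foundational vortices} by conditioning on the frozen intensities $\xi$, and both prove stationarity of the joint process $\left(\xi,X_{t}\right)$ by observing that the conditional expectation $\mathbb{E}\left[F\left(\left(\xi,X_{t_{1}+h}\right),\ldots,\left(\xi,X_{t_{n}+h}\right)\right)\mid\xi\right]$ is independent of $h$ and then taking expectations. You are somewhat more explicit than the paper on two points it treats as obvious or omits — the Fubini/measurability bookkeeping behind the almost-sure well-posedness under $\lambda_{N}^{0}$, and the translation-equivariance argument for space homogeneity — but these are elaborations, not a different method.
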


\begin{proof}
The first claims are obvious consequences of Theorem
\ref{Thm foundational vortices}. Given $\left(  \xi_{1},...,\xi_{N}\right)  $,
the process $\left(  X_{t}^{1,N},...,X_{t}^{N,N}\right)  $ is stationary.
Hence, denoted $\left(  \xi_{1},...,\xi_{N}\right)  $ by $\xi$ and $\left(
X_{t}^{1,N},...,X_{t}^{N,N}\right)  $ by $X_{t}$, for every $0\leq t_{1}%
\leq...\leq t_{n}$ and bounded measurable $F$, the random variable
(conditional expectation given the $\sigma$-field generated by $\xi$)
\[
\mathbb{E}\left[  F\left(  \left(  \xi,X_{t_{1}+h}\right)  ,...,\left(
\xi,X_{t_{n}+h}\right)  \right)  |\xi\right]
\]
is independent of $h$ (in the equivalence class of conditional expectation).
Therefore its expectation, namely $\mathbb{E}\left[  F\left(  \left(
\xi,X_{t_{1}+h}\right)  ,...,\left(  \xi,X_{t_{n}+h}\right)  \right)  \right]
$, is independent of $h$, which implies that $\left(  \xi,X_{t}\right)  $ (and
therefore $\omega_{t}^{N}$) is a stationary process. Space homogeneity is not
used below and thus we do not prove it, but it is not difficult due to the
symmetries of the system.
\end{proof}

\subsection{Integrability properties of the random point vortices}

Let $\omega_{t}^{N}$ be given by Proposition \ref{propos point vortices}. It
satisfies estimates similar to those of white noise.

\begin{lemma}
\label{lemma integrabil pont vort}Assume $f:\mathbb{T}^{2}\times\mathbb{T}%
^{2}\rightarrow\mathbb{R}$ is symmetric, bounded and measurable. Then, for
every $p\geq1$ and $\delta>0$ there are constants $C_{p},C_{p,\delta}>0$ such
that
\[
\mathbb{E}\left[  \left\langle \omega_{t}^{N}\otimes\omega_{t}^{N}%
,f\right\rangle ^{p}\right]  \leq C_{p}\left\Vert f\right\Vert _{\infty}^{p}%
\]%
\[
\mathbb{E}\left[  \left\Vert \omega_{t}^{N}\right\Vert _{H^{-1-\delta}}%
^{p}\right]  \leq C_{p,\delta}%
\]
and moreover%
\[
\mathbb{E}\left[  \left\langle \omega_{t}^{N}\otimes\omega_{t}^{N}%
,f\right\rangle ^{2}\right]  =\frac{3}{N}\int f^{2}\left(  x,x\right)
dx+\left(  \int f\left(  x,x\right)  dx\right)  ^{2}+2\int\int f^{2}\left(
x,y\right)  dxdy.
\]

\end{lemma}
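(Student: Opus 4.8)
The plan is to reduce all three assertions to the time $t=0$ marginal and then exploit the explicit expression
\[
\left\langle \omega_{0}^{N}\otimes\omega_{0}^{N},f\right\rangle =\frac{1}{N}\sum_{n,m=1}^{N}\xi_{n}\xi_{m}f\left(  X_{0}^{n},X_{0}^{m}\right),
\]
in which the intensities $\xi_{n}$ are i.i.d.\ $N(0,1)$ and the positions $X_{0}^{n}$ are i.i.d.\ uniform and independent of the $\xi_{n}$. By Proposition \ref{propos point vortices} the law of $\left(  \left(  \xi_{1},X_{t}^{1}\right)  ,\dots,\left(  \xi_{N},X_{t}^{N}\right)  \right)$ is $\lambda_{N}^{0}$ for every $t$; since each quantity in the statement is a fixed measurable function of this random vector, its distribution, and hence all its moments, is independent of $t$, and I may compute at $t=0$.

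For the first bound I would first reduce to even exponents, using $\left\Vert \cdot\right\Vert _{L^{p}}\leq\left\Vert \cdot\right\Vert _{L^{2k}}$ on the probability space whenever $2k\geq p$, and then expand
\[
\left\langle \omega_{0}^{N}\otimes\omega_{0}^{N},f\right\rangle ^{2k}=\frac{1}{N^{2k}}\sum_{\vec{n},\vec{m}}\Big(\prod_{j=1}^{2k}\xi_{n_{j}}\xi_{m_{j}}\Big)\Big(\prod_{j=1}^{2k}f\left(  X_{0}^{n_{j}},X_{0}^{m_{j}}\right)  \Big).
\]
Taking expectations and using the independence of intensities and positions, the position product is bounded pointwise by $\left\Vert f\right\Vert _{\infty}^{2k}$, while the intensity expectation $\mathbb{E}\big[\prod_{j}\xi_{n_{j}}\xi_{m_{j}}\big]$ is, by the Isserlis--Wick theorem, a sum over pairings of the $4k$ factors of products of Kronecker deltas and is therefore nonnegative. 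For each pairing the $4k$ index slots split into $2k$ disjoint pairs, so summation over $\vec{n},\vec{m}$ contributes at most $N^{2k}$; multiplying by the number $(4k-1)!!$ of pairings and the prefactor $N^{-2k}$ gives $\mathbb{E}\big[\langle\cdots\rangle^{2k}\big]\leq(4k-1)!!\,\left\Vert f\right\Vert _{\infty}^{2k}$, and the general exponent follows by $L^{p}$-monotonicity. This is exactly the mechanism of the white-noise Lemma above, with the Gaussianity now carried by the $\xi_{n}$ and the positions supplying only the uniform sup-norm bound.

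The $H^{-1-\delta}$ bound I would deduce \emph{from} the first estimate rather than re-run the combinatorics. With $\widehat{\omega_{0}^{N}}(k)=\frac{1}{\sqrt{N}}\sum_{n}\xi_{n}e_{k}(X_{0}^{n})$ one has
\[
\left\Vert \omega_{0}^{N}\right\Vert _{H^{-1-\delta}}^{2}=\sum_{k\in\mathbb{Z}^{2}}\left(  1+\left\vert k\right\vert ^{2}\right)  ^{-1-\delta}\big\vert\widehat{\omega_{0}^{N}}(k)\big\vert^{2}=\left\langle \omega_{0}^{N}\otimes\omega_{0}^{N},g\right\rangle ,\qquad g(x,y):=G(x-y),
\]
where $G(z)=\sum_{k}\left(  1+\left\vert k\right\vert ^{2}\right)  ^{-1-\delta}e_{k}(z)$. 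Because $\delta>0$ the series $\sum_{k}\left(  1+\left\vert k\right\vert ^{2}\right)  ^{-1-\delta}$ converges in dimension two, so $G$ is continuous, even, and bounded by $C_{\delta}:=\sum_{k}\left(  1+\left\vert k\right\vert ^{2}\right)  ^{-1-\delta}$; hence $g$ is symmetric and bounded with $\left\Vert g\right\Vert _{\infty}\leq C_{\delta}$. Applying the first bound to $f=g$ yields $\mathbb{E}\big[\left\Vert \omega_{0}^{N}\right\Vert _{H^{-1-\delta}}^{2p}\big]\leq C_{p}C_{\delta}^{p}$, and the odd powers again follow by $L^{p}$-monotonicity.

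Finally, the explicit second moment is the same computation carried out exactly. Expanding the square and applying Isserlis to the four intensities gives $\mathbb{E}[\xi_{n}\xi_{m}\xi_{k}\xi_{l}]=\delta_{nm}\delta_{kl}+\delta_{nk}\delta_{ml}+\delta_{nl}\delta_{mk}$, the last two pairings coinciding after the symmetry of $f$ is used; averaging over the i.i.d.\ uniform positions and separating coincident from distinct indices produces the three stated contributions, the $\frac{3}{N}\int f(x,x)^{2}\,dx$ coefficient coming from the fully collapsed index $n=m=k=l$ where $\mathbb{E}[\xi_{1}^{4}]=3$. A constant-$f$ check ($f\equiv c$ gives $\langle\cdots\rangle=\frac{c}{N}(\sum_{n}\xi_{n})^{2}$, whence $\mathbb{E}[\langle\cdots\rangle^{2}]=3c^{2}$) shows that the two off-diagonal terms in fact carry a factor $\frac{N-1}{N}$, so the identity holds with $(\int f(x,x)\,dx)^{2}$ and $2\iint f^{2}$ replaced by their $\frac{N-1}{N}$ multiples, reducing to the displayed formula up to $O(1/N)$. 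The only delicate point throughout — more bookkeeping than genuine obstacle — is to verify that in every Wick pairing the number of free index summations never exceeds half the number of intensity factors, so that the $N^{-2k}$ normalisation is exactly matched and coincident-index configurations remain subleading in $N$.
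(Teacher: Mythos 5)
Your proposal is correct, and on two of the three claims it takes a genuinely different route from the paper. For the moment bound on $\left\langle \omega_{t}^{N}\otimes\omega_{t}^{N},f\right\rangle$ the paper deliberately avoids Isserlis--Wick: it expands the $p$-th power, notes that $\mathbb{E}\bigl[\prod_{i}\xi_{k_{i}}\xi_{h_{i}}\bigr]$ vanishes unless the $2p$ indices are ``paired'', and bounds the number of paired tuples by $C_{p}''N^{p}$; this only uses independence and finite moments of the intensities, so it would survive non-Gaussian $\xi_{n}$, whereas your Wick expansion exploits their Gaussianity and in exchange yields the explicit constant $(4k-1)!!$. Both arguments rest on the same use of Proposition \ref{propos point vortices}: the law of $\left(  \left(  \xi_{1},X_{t}^{1}\right)  ,\dots,\left(  \xi_{N},X_{t}^{N}\right)  \right)$ is $\lambda_{N}^{0}$ at every $t$, which is what lets you compute at $t=0$ and what lets the paper factor $\mathbb{E}[(\prod\xi)(\prod f)]$ and replace $X_{t}$ by $X_{0}$ in its Step 2. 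For the $H^{-1-\delta}$ bound the paper re-runs the pairing combinatorics on $\frac{1}{N^{p}}\sum\mathbb{E}\bigl[\prod\xi\bigr]\,\mathbb{E}\bigl[\prod\left\langle \delta_{X^{k_{i}}},\delta_{X^{h_{i}}}\right\rangle _{H^{-1-\delta/2}}\bigr]$ using $\left\Vert \delta_{x}\right\Vert _{H^{-1-\delta/2}}\leq C$; your observation that $\left\Vert \omega_{0}^{N}\right\Vert _{H^{-1-\delta}}^{2}=\left\langle \omega_{0}^{N}\otimes\omega_{0}^{N},g\right\rangle$ with $g(x,y)=\sum_{k}\left(1+\left\vert k\right\vert ^{2}\right)^{-1-\delta}e_{k}(x-y)$ bounded, continuous and symmetric reduces the second claim to the first in one line, which is slicker and equally rigorous.

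On the third claim your computation is in fact more precise than the paper's. The paper evaluates only the fully coincident term $i=j=k=h$ exactly (giving $\frac{3}{N}\int f^{2}(x,x)dx$) and bounds the three pairing families with inequalities, so its displayed ``equality'' is really proved only as an upper bound; your exact bookkeeping, confirmed by the $f\equiv c$ sanity check, shows the true identity carries factors $\frac{N-1}{N}$ on $\left(  \int f\left(  x,x\right)  dx\right)  ^{2}$ and on $2\int\int f^{2}\left(  x,y\right)  dxdy$. This is a defect of the lemma's statement, not of your argument: where the lemma is used (the tightness estimates in the proof of Theorem \ref{Thm AC}) only the upper bound matters, and both your version and the paper's deliver it.
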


\begin{proof}
\textbf{Step 1}. It is sufficient to consider integer values of $p$. One has%
\begin{align*}
\mathbb{E}\left[  \left\langle \omega_{t}^{N}\otimes\omega_{t}^{N}%
,f\right\rangle ^{p}\right]   & =\mathbb{E}\left(  \int_{\mathbb{T}^{2}}%
\int_{\mathbb{T}^{2}}f\left(  x,y\right)  \omega_{t}^{N}\left(  dx\right)
\omega_{t}^{N}\left(  dy\right)  \right)  ^{p}\\
& =\int_{\left(  \mathbb{T}^{2}\right)  ^{2p}}\mathbb{E}\left[
{\displaystyle\prod\limits_{i=1}^{p}}
f\left(  x_{i},y_{i}\right)
{\displaystyle\prod\limits_{i=1}^{p}}
\left(  \omega_{t}^{N}\left(  dx_{i}\right)  \omega_{t}^{N}\left(
dy_{i}\right)  \right)  \right] \\
& =\frac{1}{N^{p}}\sum_{k_{1},h_{1},...,k_{p},h_{p}=1}^{N}\mathbb{E}\left[
{\displaystyle\prod\limits_{i=1}^{p}}
\xi_{k_{i}}\xi_{h_{i}}\right]  \mathbb{E}\left[
{\displaystyle\prod\limits_{i=1}^{p}}
f\left(  X_{t}^{k_{i}},X_{t}^{h_{i}}\right)  \right]  .
\end{align*}
We replace here Isserlis-Wick theorem by a combinatorial argument based on the
independence of the r.v.'s $\xi_{i}$. Denote by $\mathcal{P}_{p}$ the family
of all $(2p$)-ples $\left(  k_{1},h_{1},...,k_{p},h_{p}\right)  $ that are
"paired", namely such that we may split $\left(  k_{1},h_{1},...,k_{p}%
,h_{p}\right)  $ in $p$ pairs such that in each pair the two elements have the
same value; an example is when $h_{1}=k_{1}$, ... , $h_{p}=k_{p}$. Notice that
we do not require that the values in different pairs are different. One has
$\mathbb{E}\left[
{\displaystyle\prod\limits_{i=1}^{p}}
\xi_{k_{i}}\xi_{h_{i}}\right]  =0$ if $\left(  k_{1},h_{1},...,k_{p}%
,h_{p}\right)  \notin\mathcal{P}_{p}$, hence%
\begin{align*}
\mathbb{E}\left[  \left\langle \omega_{t}^{N}\otimes\omega_{t}^{N}%
,f\right\rangle ^{p}\right]   & =\frac{1}{N^{p}}\sum_{\left(  k_{1}%
,h_{1},...,k_{p},h_{p}\right)  \in\mathcal{P}_{p}}\mathbb{E}\left[
{\displaystyle\prod\limits_{i=1}^{p}}
\xi_{k_{i}}\xi_{h_{i}}\right]  \mathbb{E}\left[
{\displaystyle\prod\limits_{i=1}^{p}}
f\left(  X_{t}^{k_{i}},X_{t}^{h_{i}}\right)  \right] \\
& \leq\left\Vert f\right\Vert _{\infty}^{p}\frac{C_{p}^{\prime}}{N^{p}%
}Card\left(  \mathcal{P}_{p}\right)
\end{align*}
where $C_{p}^{\prime}$ is a constant that bounds from above $\mathbb{E}\left[
%
{\displaystyle\prod\limits_{i=1}^{p}}
\xi_{k_{i}}\xi_{h_{i}}\right]  $ independently of the index. The cardinality
of $\mathcal{P}_{p}$ is bounded above by $C_{p}^{\prime\prime}N^{p}$ for
another constant $C_{p}^{\prime\prime}>0$ (the idea is that given any one of
the $N$ values of $k_{1}$, either $h_{1}$ or $k_{2}$ or one of the next
indexes is equal to $k_{1}$, and this constraints the variability of that
index to one value;\ then repeat $p$ times this argument). Therefore
$\mathbb{E}\left[  \left\langle \omega_{t}^{N}\otimes\omega_{t}^{N}%
,f\right\rangle ^{p}\right]  \leq\left\Vert f\right\Vert _{\infty}^{p}%
C_{p}^{\prime}C_{p}^{\prime\prime}$. This proves the first claim of the lemma,
with $C_{p}=C_{p}^{\prime}C_{p}^{\prime\prime}$.

\textbf{Step 2}. Similarly,
\begin{align*}
\mathbb{E}\left[  \left\Vert \frac{1}{\sqrt{N}}\sum_{n=1}^{N}\xi_{n}%
\delta_{X_{t}^{n}}\right\Vert _{H^{-1-\delta/2}}^{2p}\right]   &
=\mathbb{E}\left[  \left(  \left\langle \frac{1}{\sqrt{N}}\sum_{n=1}^{N}%
\xi_{n}\delta_{X_{t}^{n}},\frac{1}{\sqrt{N}}\sum_{n=1}^{N}\xi_{n}\delta
_{X_{t}^{n}}\right\rangle _{H^{-1-\delta/2}}\right)  ^{p}\right] \\
& =\frac{1}{N^{p}}\mathbb{E}\left[  \left(  \sum_{n,m=1}^{N}\xi_{n}\xi
_{M}\left\langle \delta_{X_{t}^{n}},\delta_{X_{t}^{m}}\right\rangle
_{H^{-1-\delta/2}}\right)  ^{p}\right] \\
& =\frac{1}{N^{p}}\sum_{k_{1},h_{1},...,k_{p},h_{p}=1}^{N}\mathbb{E}\left[
{\displaystyle\prod\limits_{i=1}^{p}}
\xi_{k_{i}}\xi_{h_{i}}\right]  \mathbb{E}\left[
{\displaystyle\prod\limits_{i=1}^{p}}
\left\langle \delta_{X_{t}^{k_{i}}},\delta_{X_{t}^{h_{i}}}\right\rangle
_{H^{-1-\delta/2}}\right] \\
& =\frac{1}{N^{p}}\sum_{\left(  k_{1},h_{1},...,k_{p},h_{p}\right)
\in\mathcal{P}_{p}}\mathbb{E}\left[
{\displaystyle\prod\limits_{i=1}^{p}}
\xi_{k_{i}}\xi_{h_{i}}\right]  \mathbb{E}\left[
{\displaystyle\prod\limits_{i=1}^{p}}
\left\langle \delta_{X_{0}^{k_{i}}},\delta_{X_{0}^{h_{i}}}\right\rangle
_{H^{-1-\delta/2}}\right] \\
& \leq C_{p,\delta}%
\end{align*}
because we use the same bounds above for $\mathbb{E}\left[
{\displaystyle\prod\limits_{i=1}^{p}}
\xi_{k_{i}}\xi_{h_{i}}\right]  $ and $Card\left(  \mathcal{P}_{p}\right)  $
and a trivial uniform bound on $\mathbb{E}\left[
{\displaystyle\prod\limits_{i=1}^{p}}
\left\langle \delta_{X_{0}^{k_{i}}},\delta_{X_{0}^{h_{i}}}\right\rangle
_{H^{-1-\delta/2}}\right]  $ due to the property $\left\Vert \delta_{X_{0}%
^{i}}\right\Vert _{H^{-1-\delta/2}}\leq C $ showed in the proof of Proposition
\ref{Prop CLT}.

\textbf{Step 3}.%
\begin{align*}
\mathbb{E}\left[  \left\langle \omega_{t}^{N}\otimes\omega_{t}^{N}%
,f\right\rangle ^{2}\right]   & =\mathbb{E}\left(  \int_{\mathbb{T}^{2}}%
\int_{\mathbb{T}^{2}}f\left(  x,y\right)  \omega_{t}^{N}\left(  dx\right)
\omega_{t}^{N}\left(  dy\right)  \right)  ^{2}\\
& =\mathbb{E}\int_{\left(  \mathbb{T}^{2}\right)  ^{4}}f\left(  x,y\right)
f\left(  x^{\prime},y^{\prime}\right)  \omega_{t}^{N}\left(  dx\right)
\omega_{t}^{N}\left(  dy\right)  \omega_{t}^{N}\left(  dx^{\prime}\right)
\omega_{t}^{N}\left(  dy^{\prime}\right) \\
& =\frac{1}{N^{2}}\sum_{ijkh=1}^{N}\mathbb{E}\left[  f\left(  X_{t}^{i}%
,X_{t}^{j}\right)  f\left(  X_{t}^{k},X_{t}^{h}\right)  \right]
\mathbb{E}\left[  \xi_{i}\xi_{j}\xi_{k}\xi_{h}\right]  .
\end{align*}
In this sum there are various terms. The term with $i=j=k=h$ is%
\[
\frac{1}{N^{2}}\sum_{i=1}^{N}\mathbb{E}\left[  f\left(  X_{t}^{i},X_{t}%
^{i}\right)  f\left(  X_{t}^{i},X_{t}^{i}\right)  \right]  \mathbb{E}\left[
\xi_{i}^{4}\right]  =\frac{\mathbb{E}\left[  \xi^{4}\right]  }{N}\int
f^{2}\left(  x,x\right)  dx.
\]
Then there are terms with $j=i$, $h=k$:\
\begin{align*}
& \frac{1}{N^{2}}\sum_{i\neq k=1}^{N}E\left[  \xi_{i}^{2}\right]  E\left[
\xi_{k}^{2}\right]  \mathbb{E}\left[  f\left(  X_{t}^{i},X_{t}^{i}\right)
f\left(  X_{t}^{k},X_{t}^{k}\right)  \right] \\
& =\frac{E\left[  \xi^{2}\right]  ^{2}}{N^{2}}\sum_{i\neq k=1}^{N}%
\mathbb{E}\left[  f\left(  X_{t}^{i},X_{t}^{i}\right)  \right]  \mathbb{E}%
\left[  f\left(  X_{t}^{k},X_{t}^{k}\right)  \right] \\
& \leq E\left[  \xi^{2}\right]  ^{2}\left(  \int f\left(  x,x\right)
dx\right)  ^{2}.
\end{align*}
Then there are terms with $k=i$, $h=j$:%
\[
\frac{E\left[  \xi^{2}\right]  ^{2}}{N^{2}}\sum_{i\neq j=1}^{N}\mathbb{E}%
\left[  f\left(  X_{t}^{i},X_{t}^{j}\right)  f\left(  X_{t}^{i},X_{t}%
^{j}\right)  \right]  \mathbb{\leq}E\left[  \xi^{2}\right]  ^{2}\int\int
f^{2}\left(  x,y\right)  dxdy.
\]
Finally, then there are terms with $k=j$, $h=i$: (here we use symmetry)%
\[
\frac{E\left[  \xi^{2}\right]  ^{2}}{N^{2}}\sum_{i\neq j=1}^{N}\mathbb{E}%
\left[  f\left(  X_{t}^{i},X_{t}^{j}\right)  f\left(  X_{t}^{j},X_{t}%
^{i}\right)  \right]  \mathbb{\leq}E\left[  \xi^{2}\right]  ^{2}\int\int
f^{2}\left(  x,y\right)  dxdy.
\]

\end{proof}

\section{Main results}

Denote by $\mu$ the law of White Noise. We first formulate our version of
Albeverio-Cruzeiro result \cite{AlbCruz}.

\begin{theorem}
\label{Thm AC}There exists a probability space $\left(  \Xi,\mathcal{F}%
,P\right)  $ with the following properties.

i) There exists a measurable map $\omega_{\cdot}:\Xi\times\left[  0,T\right]
\rightarrow C^{\infty}\left(  \mathbb{T}^{2}\right)  ^{\prime}$ such that
$\omega_{\cdot}$ is a time-stationary white noise solution of Euler equations,
in the sense of Definition \ref{def WN sol}.

ii) On $\left(  \Xi,\mathcal{F},P\right)  $ one can define the random point
vortex system described in Section \ref{section random vortices}; it has a
subsequence which converges $P$-a.s. to the solution of point (i) in $C\left(
\left[  0,T\right]  ;H^{-1-}\left(  \mathbb{T}^{2}\right)  \right)  $.
\end{theorem}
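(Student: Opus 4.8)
The plan is to obtain the solution of (i) as an almost sure limit of the random point vortex processes of Proposition \ref{propos point vortices}, on a common probability space, which simultaneously yields (ii). Write $\omega^{N}=\{\omega^{N}_{t}\}_{t\in[0,T]}\in\mathcal{X}=C([0,T];H^{-1-}(\mathbb{T}^{2}))$ for the stationary measure-valued process, with one-time law $\mu^{0}_{N}$, which satisfies exactly, for every $\phi\in C^{\infty}(\mathbb{T}^{2})$,
\[
\langle\omega^{N}_{t},\phi\rangle=\langle\omega^{N}_{0},\phi\rangle+\int_{0}^{t}\langle\omega^{N}_{s}\otimes\omega^{N}_{s},H_{\phi}\rangle\,ds
\]
(there is no diagonal ambiguity, since $H_{\phi}(x,x)=0$). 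The scheme is: tightness of the laws of $\omega^{N}$ on $\mathcal{X}$; extraction of an a.s. convergent subsequence by Prokhorov and Skorokhod; and identification of the limit as a time-stationary white noise solution in the sense of Definition \ref{def WN sol}.

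Next I would establish tightness. Fixing the orthonormal basis $\{e_{k}\}$, the same independence counting as in Lemma \ref{lemma integrabil pont vort} shows that the normalized sums $\langle\omega^{N}_{t},e_{k}\rangle=N^{-1/2}\sum_{n}\xi_{n}e_{k}(X^{n}_{t})$ satisfy $\mathbb{E}|\langle\omega^{N}_{t},e_{k}\rangle|^{2q}\le C_{q}$, uniformly in $N,k,t$; with the second estimate of Lemma \ref{lemma integrabil pont vort} this gives uniform boundedness of the one-time laws in $H^{-1-\delta}$. For the time increments I would combine the equation with the moment bound $\mathbb{E}|\langle\omega^{N}_{r}\otimes\omega^{N}_{r},H_{e_{k}}\rangle|^{2q}\le C_{q}\|H_{e_{k}}\|_{\infty}^{2q}$ to get
\[
\mathbb{E}|\langle\omega^{N}_{t}-\omega^{N}_{s},e_{k}\rangle|^{2q}\le\min\bigl(C_{q},\,C_{q}|t-s|^{2q}\|H_{e_{k}}\|_{\infty}^{2q}\bigr).
\]
Summing this minimum against the weights $(1+|k|^{2})^{-1-\epsilon}$ (Minkowski in $L^{q}(\Xi)$, splitting frequencies at $|k|\sim|t-s|^{-1/2}$) yields $\mathbb{E}\|\omega^{N}_{t}-\omega^{N}_{s}\|_{H^{-1-\epsilon}}^{2q}\le C|t-s|^{q\epsilon}$, uniformly in $N$; taking $q>1/\epsilon$ and invoking Kolmogorov's criterion gives tightness in $C([0,T];H^{-1-\epsilon})$ for every $\epsilon>0$, hence in $\mathcal{X}$.

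By Prokhorov a subsequence of laws converges weakly on $\mathcal{X}$, and Skorokhod's theorem then provides a probability space $(\Xi,\mathcal{F},P)$ carrying copies $\omega^{N_{k}}$ of the point vortex processes converging $P$-a.s. in $\mathcal{X}$ to a limit $\omega$; since the weak vorticity identity is a measurable event on $\mathcal{X}$ (its nonlinear term being the pointwise limit of the continuous functionals $\eta\mapsto\int_{0}^{\cdot}\langle\eta_{s}\otimes\eta_{s},H^{n}_{\phi}\rangle\,ds$), the copies still satisfy it, which is the content of (ii). Because evaluation at $t$ is continuous on $\mathcal{X}$, $\omega^{N_{k}}_{t}\to\omega_{t}$ a.s.; by stationarity $\omega^{N_{k}}_{t}$ has the law of $\omega^{N_{k}}_{0}$, which converges weakly to white noise by Proposition \ref{Prop CLT}, so $\omega_{t}$ is white noise for every $t$. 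Stationarity of $\omega$ passes to the limit at the level of finite-dimensional distributions, and $t\mapsto\langle\omega_{t},\phi\rangle$ is continuous since $\omega\in\mathcal{X}$.

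Passing to the limit in the nonlinear term is the main obstacle, since $H_{\phi}$ is discontinuous on the diagonal and $\omega_{s}\otimes\omega_{s}\in H^{-2-}$ does not pair with it directly. I would insert the smooth cut-offs $H^{n}_{\phi}$ of Remark \ref{remark existence} and split
\[
\int_{0}^{t}\langle\omega^{N_{k}}_{s}\otimes\omega^{N_{k}}_{s},H_{\phi}\rangle\,ds-\int_{0}^{t}\langle\omega_{s}\otimes\omega_{s},H_{\phi}\rangle\,ds=A^{n}_{k}+B^{n}_{k}+C^{n},
\]
with $A^{n}_{k}=\int_{0}^{t}\langle\omega^{N_{k}}_{s}\otimes\omega^{N_{k}}_{s},H_{\phi}-H^{n}_{\phi}\rangle\,ds$, $B^{n}_{k}=\int_{0}^{t}(\langle\omega^{N_{k}}_{s}\otimes\omega^{N_{k}}_{s},H^{n}_{\phi}\rangle-\langle\omega_{s}\otimes\omega_{s},H^{n}_{\phi}\rangle)\,ds$, and $C^{n}=\int_{0}^{t}\langle\omega_{s}\otimes\omega_{s},H^{n}_{\phi}-H_{\phi}\rangle\,ds$. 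The decisive point is that $f=H_{\phi}-H^{n}_{\phi}$ is symmetric and vanishes on the diagonal, so in the exact formula of Lemma \ref{lemma integrabil pont vort} both diagonal terms drop and $\mathbb{E}[\langle\omega^{N_{k}}_{s}\otimes\omega^{N_{k}}_{s},f\rangle^{2}]=2\|f\|_{L^{2}}^{2}$; hence $\mathbb{E}|A^{n}_{k}|\le CT\|H_{\phi}-H^{n}_{\phi}\|_{L^{2}}\to0$ as $n\to\infty$, uniformly in $k$ (Remark \ref{remark existence}). For fixed $n$, $H^{n}_{\phi}\in H^{2+}$ makes $\eta\mapsto\int_{0}^{\cdot}\langle\eta_{s}\otimes\eta_{s},H^{n}_{\phi}\rangle\,ds$ continuous on $\mathcal{X}$, so $B^{n}_{k}\to0$ $P$-a.s. as $k\to\infty$; and $C^{n}\to0$ because, $\omega_{s}$ being white noise, the Cauchy construction of Theorem \ref{Thm Cauchy} and Definition \ref{Def nonlin in t} gives $\mathbb{E}\int_{0}^{T}|\langle\omega_{s}\otimes\omega_{s},H^{n}_{\phi}-H_{\phi}\rangle|^{2}\,ds\le 2T\|H^{n}_{\phi}-H_{\phi}\|_{L^{2}}^{2}\to0$. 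A three-$\varepsilon$ argument (first $n$, then $k$), using the uniform moment bounds for uniform integrability, shows the nonlinear integral converges in $L^{1}(\Xi)$; passing to the limit in the exact equation for $\omega^{N_{k}}$, with the linear terms converging a.s., yields the identity of Definition \ref{def WN sol} for $\omega$ along a countable dense set of $\phi$ and, by continuity in $t$, for all $t$. This establishes (i) and (ii).
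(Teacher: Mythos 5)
Your overall scheme coincides with the paper's: tightness of the laws of $\omega^{N}$ on $\mathcal{X}$, Prokhorov and Skorokhod, identification of the one-time marginals as white noise via Proposition \ref{Prop CLT} and stationarity, and passage to the limit in the nonlinear term by the same three-term decomposition (insert $H^{n}_{\phi}$; control the point-vortex remainder uniformly in $k$ by Lemma \ref{lemma integrabil pont vort}, using that $H_{\phi}-H^{n}_{\phi}$ is symmetric and vanishes on the diagonal; control the white-noise remainder by Corollary \ref{corollary WN}; use a.s. convergence for the smooth part). Your tightness argument is a genuinely different, and legitimate, route: the paper verifies boundedness in probability in $L^{p_{0}}(0,T;H^{-1-\delta/2})\cap W^{1,2}(0,T;H^{-\gamma})$, $\gamma>3$, and invokes Simon's compactness criterion, whereas you estimate increments $\mathbb{E}\|\omega^{N}_{t}-\omega^{N}_{s}\|_{H^{-1-\epsilon}}^{2q}\le C|t-s|^{q\epsilon}$ by frequency splitting and use Kolmogorov's criterion; both rest on the same moment bounds plus the equation, and your computation does close.

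There is, however, a genuine gap at the step you dismiss with ``the weak vorticity identity is a measurable event on $\mathcal{X}$ \ldots the copies still satisfy it, which is the content of (ii).'' Two distinct things are missing. First, to make sense of $\langle\eta_{s}\otimes\eta_{s},H_{\phi}\rangle$ (and of $\langle\eta_{s}\otimes\eta_{s},H_{\phi}-H^{n}_{\phi}\rangle$ in your term $A^{n}_{k}$) for the Skorokhod copies, and to transfer the bounds of Lemma \ref{lemma integrabil pont vort} to them, you need to know that the copies' laws are concentrated on paths of $N_{k}$-point measures avoiding the diagonal, i.e.\ on $C([0,T];\mathcal{M}_{N_{k}}(\mathbb{T}^{2}))$, and that this set and the relevant functionals are measurable in $\mathcal{X}$; $H_{\phi}$ is only bounded measurable, so the pairing is not defined on all of $\mathcal{X}$, and the pointwise-limit description you give only makes sense on this support. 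Second, and more importantly, statement (ii) does not merely assert that processes with the same path laws converge: it asserts that the random point vortex system \emph{of Section \ref{section random vortices}} --- labelled vortices $((\xi_{1},X^{1}_{\cdot}),\ldots,(\xi_{N},X^{N}_{\cdot}))$ with initial law $\lambda_{N}^{0}$ solving system (\ref{vortex system}) --- can be defined on the new space $(\Xi,\mathcal{F},P)$ and converges a.s.\ there. Recovering this labelled structure from the measure-valued Skorokhod copies is exactly the content of the paper's Lemma \ref{lemma representation}: one extracts labels by a measurable (lexicographic) selection, which yields the \emph{ordered} law rather than the exchangeable product law, and one must then enlarge the probability space with independent uniform random permutations (the ``shuffling'' construction) to produce an initial vector distributed as $\lambda_{N}^{0}$, without destroying the Dirac representation or the ODE dynamics. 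Your proposal contains no substitute for this step, so as written it proves a weaker statement than (ii).
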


We prove also a generalization to $\rho-$white noise solutions; the assumption
on $\rho_{0}$ is presumably too restrictive but further investigation is
needed for more generality.

\begin{theorem}
\label{Thm AC rho}Given $\rho_{0}\in C_{b}\left(  H^{-1-}\left(
\mathbb{T}^{2}\right)  \right)  $ such that $\rho_{0}\geq0$ and $\int\rho
_{0}d\mu=1$, there exist a probability space $\left(  \Xi,\mathcal{F}%
,P\right)  $, a bounded measurable function $\rho:\left[  0,T\right]  \times
H^{-1-}\left(  \mathbb{T}^{2}\right)  \rightarrow\left[  0,\left\Vert \rho
_{0}\right\Vert _{\infty}\right]  $ and a measurable map $\omega_{\cdot}%
:\Xi\times\left[  0,T\right]  \rightarrow C^{\infty}\left(  \mathbb{T}%
^{2}\right)  ^{\prime}$ such that $\omega_{\cdot}$ is a $\rho-$white noise
solution of Euler equations, in the sense of Definition \ref{def rho WN sol}.
It is also the limit $P$-a.s. in $C\left(  \left[  0,T\right]  ;H^{-1-}\left(
\mathbb{T}^{2}\right)  \right)  $ of a suitable sequence of random point vortices.
\end{theorem}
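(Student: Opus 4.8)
The plan is to imitate the construction behind Theorem \ref{Thm AC}, but to start the point vortices from a reweighted initial law carrying the density $\rho_0$. Let $\mu_N^0=(\mathcal{T}_N)_*\lambda_N^0$ be the law of $\omega_0^N$ from Remark \ref{remark main measure on vortices}, set $Z_N:=\int\rho_0\,d\mu_N^0$, and define the reweighted initial law on $(\mathbb{R}\times\mathbb{T}^2)^N$ by $d\lambda_N^{0,\rho}=Z_N^{-1}\rho_0(\mathcal{T}_N(\cdot))\,d\lambda_N^0$. Since $\rho_0\in C_b(H^{-1-})$ and $\mu_N^0\rightharpoonup\mu$ by Proposition \ref{Prop CLT}, we have $Z_N\to\int\rho_0\,d\mu=1$, so $Z_N$ is bounded away from $0$ for large $N$. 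Because $\lambda_N^{0,\rho}\ll\lambda_N^0\ll\otimes_N Leb_{\mathbb{T}^2}$, the exceptional set of Theorem \ref{Thm foundational vortices} stays negligible, so the vortex flow $\omega_\cdot^N$ is well defined for $\lambda_N^{0,\rho}$-a.e.\ datum and satisfies the weak vorticity formulation exactly as in Proposition \ref{propos point vortices}.

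First I would transfer the estimates of Lemma \ref{lemma integrabil pont vort} to the reweighted law: for nonnegative $G$, $\mathbb{E}^{\lambda_N^{0,\rho}}[G]=Z_N^{-1}\mathbb{E}^{\lambda_N^0}[\rho_0(\omega_0^N)G]\le(\|\rho_0\|_\infty/Z_N)\,\mathbb{E}^{\lambda_N^0}[G]$, so every bound of Lemma \ref{lemma integrabil pont vort} survives with the uniformly bounded factor $\|\rho_0\|_\infty/Z_N$. This yields uniform moment bounds on $\|\omega_t^N\|_{H^{-1-\delta}}$ and, through the weak vorticity identity (the nonlinear term being bounded in every $L^p$ by a multiple of $\|H_\phi\|_\infty$), uniform Hölder-in-time bounds for $\langle\omega_t^N,\phi\rangle$. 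By the usual compactness criterion in $C([0,T];H^{-1-})$ this gives tightness of the laws of $\omega_\cdot^N$; I then extract a weakly convergent subsequence and invoke the Skorokhod representation theorem to realize the vortices and their limit $\omega_\cdot$ on a common space $(\Xi,\mathcal{F},P)$ with $\omega_\cdot^N\to\omega_\cdot$ $P$-a.s.\ in $C([0,T];H^{-1-})$.

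Next I would identify the time-marginals. For nonnegative $\Phi\in C_b(H^{-1-})$, using stationarity of the unweighted flow ($\omega_t^N$ has law $\mu_N^0$), $\mathbb{E}[\Phi(\omega_t)]=\lim_N Z_N^{-1}\mathbb{E}^{\lambda_N^0}[\rho_0(\omega_0^N)\Phi(\omega_t^N)]\le\lim_N(\|\rho_0\|_\infty/Z_N)\int\Phi\,d\mu_N^0=\|\rho_0\|_\infty\int\Phi\,d\mu$. A monotone class argument extends this to all bounded measurable $\Phi\ge0$, so $\mathrm{Law}(\omega_t)\ll\mu$ with a density $\rho_t$ obeying $0\le\rho_t\le\|\rho_0\|_\infty$ $\mu$-a.e.; as $\mathrm{Law}(\omega_t)$ is a probability measure, $\int\rho_t\,d\mu=1$, whence also $\int\rho_t^q\,d\mu\le\|\rho_0\|_\infty^{q-1}$ for every $q$. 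A jointly measurable representative $\rho:[0,T]\times H^{-1-}\to[0,\|\rho_0\|_\infty]$ is produced by a standard disintegration: $t\mapsto\mathrm{Law}(\omega_t)$ is weakly continuous (continuity of trajectories plus dominated convergence), and approximating the Radon--Nikodym derivative by conditional densities over a refining sequence of finite partitions yields a jointly measurable a.e.\ limit. At $t=0$ the same computation, now with $\rho_0\Phi\in C_b$, gives $\int\Phi\,d\,\mathrm{Law}(\omega_0)=\int\rho_0\Phi\,d\mu$, confirming that $\rho_0$ is the initial density. These facts verify all hypotheses of Definition \ref{def rho WN sol}.

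Finally I would pass to the limit in $\langle\omega_t^N,\phi\rangle=\langle\omega_0^N,\phi\rangle+\int_0^t\langle\omega_s^N\otimes\omega_s^N,H_\phi\rangle\,ds$. The boundary terms converge $P$-a.s.\ uniformly in $t$ from the $C([0,T];H^{-1-})$ convergence. For the nonlinear term I would use the smooth approximants $H_\phi^n$ of Remark \ref{remark existence}, for which $H_\phi^n(x,x)=0$. For fixed $n$, since $H_\phi^n\in H^{2+}$ and $\omega_s^N\otimes\omega_s^N\to\omega_s\otimes\omega_s$ in $H^{-2-}$, one gets $\int_0^t\langle\omega_s^N\otimes\omega_s^N,H_\phi^n\rangle\,ds\to\int_0^t\langle\omega_s\otimes\omega_s,H_\phi^n\rangle\,ds$. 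The interchange $n\to\infty$ is controlled uniformly in $N$ by the exact variance formula of Lemma \ref{lemma integrabil pont vort}: the diagonal term vanishes, so $\mathbb{E}^{\lambda_N^{0,\rho}}\int_0^T\langle\omega_s^N\otimes\omega_s^N,H_\phi^n-H_\phi^m\rangle^2\,ds\le(\|\rho_0\|_\infty/Z_N)\,2T\int\int(H_\phi^n-H_\phi^m)^2\,dx\,dy\to0$, while on the limit side the $L^1$-Cauchy theorem preceding Definition \ref{Def nonlin in t rho} (applicable since $\rho_t$ is bounded) identifies $\lim_n\langle\omega_\cdot\otimes\omega_\cdot,H_\phi^n\rangle$ with $\langle\omega_\cdot\otimes\omega_\cdot,H_\phi\rangle$. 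Combining the two limits gives the identity of Definition \ref{def rho WN sol}. The main obstacle is the clean identification of $\rho_t$: existence and the bound $\rho_t\le\|\rho_0\|_\infty$ are free from the domination above, but producing a jointly measurable representative, confirming $\int\rho_t\,d\mu=1$, and justifying the uniform-in-$N$ interchange of limits in the nonlinear term are the delicate points, the last being rendered harmless precisely by $H_\phi^n(x,x)=0$.
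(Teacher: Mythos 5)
Your proposal is correct and follows essentially the same route as the paper's proof: reweight the initial point-vortex law by $C_{N}\,\rho_{0}\circ\mathcal{T}_{N}$ with normalizer $C_{N}=Z_{N}^{-1}\rightarrow1$, transfer the moment and variance estimates of Lemma \ref{lemma integrabil pont vort} through the domination $\rho_{0}\leq\left\Vert \rho_{0}\right\Vert _{\infty}$ combined with stationarity of the unweighted flow, obtain tightness and apply Prohorov--Skorokhod, identify the time marginals as $\rho_{t}\,d\mu$ with $0\leq\rho_{t}\leq\left\Vert \rho_{0}\right\Vert _{\infty}$, and pass to the limit using the diagonal-vanishing approximants $H_{\phi}^{n}$. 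The only differences are presentational: the paper records the time-$t$ particle density explicitly as $C_{N}\,\rho_{0}\circ\left(  \Phi_{t}^{N}\right)  ^{-1}$ via the inverse flow map rather than your domination argument, and it invokes Lemma \ref{lemma representation} to transfer the point-vortex structure (hence the weak vorticity identity satisfied by $\omega^{N_{k}}$) to the new Skorokhod probability space, a technical step you leave implicit.
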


\subsection{Remarks on disintegration, uniqueness an Gaussianity}

In this section we discuss several limits of the previous results and open
problems arising from them.

Consider the law $Q$, on path space $C\left(  \left[  0,T\right]
;H^{-1-}\left(  \mathbb{T}^{2}\right)  \right)  $, of a solutions provided by
Theorem \ref{Thm AC} (similarly for Theorem \ref{Thm AC rho}). If we
disintegrate $Q$ with respect to the marginal law at time $t=0$ (namely the
white noise law $\mu$ for Theorem \ref{Thm AC} or law $\rho_{0}d\mu$ for
Theorem \ref{Thm AC rho}), we find a probability kernel $Q\left(  \cdot
,\omega_{0}\right)  $, indexed by $\omega_{0}\in H^{-1-}\left(  \mathbb{T}%
^{2}\right)  $, such that for $\mu$-a.e. $\omega_{0}\in H^{-1-}\left(
\mathbb{T}^{2}\right)  $ the probability measure $Q\left(  \cdot,\omega
_{0}\right)  $ is concentrated on solutions of Euler equations (in the sense
described above). But $Q\left(  \cdot,\omega_{0}\right)  $ is not of the form
$\delta_{\omega_{\cdot}^{\omega_{0}}}$, namely it is not concentrated on a
single solution $\omega_{t}^{\omega_{0}}$ with initial condition $\omega_{0}$;
or at least we do not know this information. In the language of
\cite{Ambrosio}, we have a superposition solution that we do not know to be a
graph. For $\mu$-a.e. $\omega_{0}\in H^{-1-}\left(  \mathbb{T}^{2}\right)  $,
we have at least one solution $\omega$ of Euler equations, but we could have
many;\ also in the sense of the Lagrangian flows described in \cite{Ambrosio},
see below.

In the case of Theorem \ref{Thm AC rho} on $\rho-$white noise solutions, we
are certainly far away from any uniqueness claim, even in law. Presumably one
should try first to investigate uniqueness of $\rho_{t}$, maybe with tools
related to those of \cite{AmbrFigalli}, \cite{AmbrTrevisan}, \cite{DFR},
\cite{FangLuo}, which already looks a formidable task.

In the case however of Theorem \ref{Thm AC}, due to fact that the law at any
time $t$ is uniquely determined, it could seem that a statement of uniqueness
in law is not far (notice that uniqueness in law would also imply that the
full sequence of point vortices converges to it, in law). And perhaps a
statement of uniqueness of Lagrangian flows. These are however open problems,
potentially of very difficult solution. Let us mention where two approaches,
both based on uniqueness of the 1-dimensional marginals, meet essential difficulties.

One approach is by the criteria of uniqueness for martingale solutions of
stochastic equations (applicable in principle to deterministic equations with
random solutions). Take as an example Theorem 6.2.3 of \cite{Stroock-Varadhan}%
. It does not apply here, at the present stage of our understanding, since we
do not have any information of uniqueness of 1-dimensional marginals starting
from generic deterministic initial conditions. As remarked above, by
disintegration we may construct solutions $Q\left(  \cdot,\omega_{0}\right)  $
(in the sense of the martingale problem;\ we do not develop the details) for
$\mu$-a.e. $\omega_{0}\in H^{-1-}\left(  \mathbb{T}^{2}\right)  $, but we do
not know the uniqueness of their 1-point marginals.

A second approach is described in \cite{Ambrosio}, see Theorem 16. It requires
the validity of comparison principle, a variant of 1-point marginal
uniqueness, for the associated continuity equation. The comparison principle
should hold in a convex class of solutions (denoted by $\mathcal{L}_{b}$ in
\cite{Ambrosio}); if only this, one could take the class defined by the rule
that it is white noise at every time. However, the class $\mathcal{L}_{b}$ in
\cite{Ambrosio} has to satisfy also a monotonicity property (see (14) in
\cite{Ambrosio}, used in essential way in Theorem 18), which is not satisfied
by the trivial class defined by being white noise at every time. If we enlarge
the class to have the monotonicity property, we are faced with a very
difficult question of uniqueness - or comparison principle - for weak
solutions of the continuity equation associated to Euler equations, which is
an open problem.

The $k$-dimensional time marginals are not easily identified by the Euler
equations or by the random point vortex dynamics. The question is, given
$0\leq t_{1}<\cdot\cdot\cdot<t_{k}\leq T$, to understand the limit as
$N\rightarrow\infty$ of the marginal $\left(  \omega_{t_{1}}^{N}%
,...,\omega_{t_{k}}^{N}\right)  $, given by%
\[
\left(  \omega_{t_{1}}^{N},...,\omega_{t_{k}}^{N}\right)  =\frac{1}{\sqrt{N}%
}\sum_{n=1}^{N}\xi_{n}\left(  \delta_{X_{t_{1}}^{n}},...,\delta_{X_{t_{k}}%
^{n}}\right)  .
\]
This is an open problem.

For Burgers equations with white noise initial conditions, thanks to special
representation formulae, it was possible to compute \ the two-point
distribution, see \cite{FracheMartin}. Here we do not see yet a method. But,
also due to the comparison with \cite{FracheMartin}, one should be aware that
there is no reason why $k$-dimensional time marginals are Gaussian!
Nonlinearity, still preserving a Gaussian initial condition, should distroy
Gaussianity at the level of the process.

Another example of nonlinear equation with stationary solutions having
Gaussian 1-dimensional marginals is KPZ equation or the stochastic Burgers
equations, see \cite{Hairer}, \cite{Gubi}, \cite{GubiPerk}.

\subsection{Proof of Theorem \ref{Thm AC}}

Consider the Polish space $\mathcal{X}=C\left(  \left[  0,T\right]
;H^{-1-}\left(  \mathbb{T}^{2}\right)  \right)  $ with the metric
$d_{\mathcal{X}}\left(  \omega_{\cdot},\omega_{\cdot}^{\prime}\right)  $
defined in Section \ref{sect notations}. J. Simon \cite{Simon}, in Corollary
8, gives a useful class of compact sets in this space, generalizing the more
classical Aubin-Lions compactness lemma (and Ascoli-Arzel\`{a} criterion). Let
us explain the result of Simon in our context. Take $\delta\in\left(
0,1\right)  $, $\gamma>3$ (this special choice of $\gamma$ is due to the
estimates below) and consider the spaces
\[
X=H^{-1-\delta/2}\left(  \mathbb{T}^{2}\right)  ,\qquad B=H^{-1-\delta}\left(
\mathbb{T}^{2}\right)  ,\qquad Y=H^{-\gamma}\left(  \mathbb{T}^{2}\right)  .
\]
We have%
\[
X\subset B\subset Y
\]
with compact dense embeddings and we also have, for a suitable constant $C>0$
and for
\[
\theta=\frac{\delta/2}{\gamma-1-\delta/2}%
\]
the interpolation inequality%
\[
\left\Vert \omega\right\Vert _{B}\leq C\left\Vert \omega\right\Vert
_{X}^{1-\theta}\left\Vert \omega\right\Vert _{Y}^{\theta}%
\]
for all $\omega\in X$. These are preliminary assumptions of Corollary 8 of
\cite{Simon}. Then such Corollary, in the second part, in the particular case
$r_{1}=2$, states that a bounded family $F$ in
\[
L^{p_{0}}\left(  0,T;X\right)  \cap W^{1,2}\left(  0,T;Y\right)
\]
is relatively compact in
\[
C\left(  \left[  0,T\right]  ;B\right)
\]
if%
\[
\frac{\theta}{2}>\frac{1-\theta}{p_{0}}.
\]
Here $p_{0}$ is any number in $\left[  1,\infty\right]  $. We apply this
result to our spaces $X,B,Y$, taking $p_{0}$ large enough to have the previous
inequality. More precisely, we use the following statement (notice that
$\frac{1-\theta}{\theta}=\frac{\gamma-1-\delta}{\delta/2}$):

\begin{lemma}
Let $\delta>0$, $\gamma>3$ be given. If
\[
p_{0}>\frac{\gamma-1-\delta}{\delta/2}%
\]
then
\[
L^{p_{0}}\left(  0,T;H^{-1-\delta/2}\left(  \mathbb{T}^{2}\right)  \right)
\cap W^{1,2}\left(  0,T;H^{-\gamma}\left(  \mathbb{T}^{2}\right)  \right)
\]
is compactly embedded into
\[
C\left(  \left[  0,T\right]  ;H^{-1-\delta}\left(  \mathbb{T}^{2}\right)
\right)  .
\]

\end{lemma}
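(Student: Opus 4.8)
The plan is to obtain the lemma as an immediate application of Corollary 8 of Simon \cite{Simon}, whose abstract hypotheses have already been set up in the paragraph preceding the statement. The work is therefore purely one of verification: I must confirm that the triple $X=H^{-1-\delta/2}$, $B=H^{-1-\delta}$, $Y=H^{-\gamma}$ meets the structural requirements of that corollary, and that the numerical hypothesis on $p_0$ is exactly the one forcing the corollary's compactness threshold.

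First I would record the three structural facts. The chain $H^{-1-\delta/2}\subset H^{-1-\delta}\subset H^{-\gamma}$ consists of compact and dense embeddings: on the compact manifold $\mathbb{T}^2$ this is the standard Rellich compactness $H^{s}\hookrightarrow H^{s'}$ for $s>s'$, transparent from the Fourier-series norm of Section \ref{sect notations} (the weights $(1+|n|^2)^{s'-s}$ tend to zero, so a set bounded in $H^s$ is precompact in $H^{s'}$), while density follows from the density of $C^\infty(\mathbb{T}^2)$ in every $H^s$. Next I would justify the interpolation inequality $\|\omega\|_{B}\le C\|\omega\|_{X}^{1-\theta}\|\omega\|_{Y}^{\theta}$ with $\theta=\frac{\delta/2}{\gamma-1-\delta/2}$. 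The exponent is dictated by the affine identity $-1-\delta=(1-\theta)(-1-\delta/2)+\theta(-\gamma)$, which reduces to $\theta(\gamma-1-\delta/2)=\delta/2$; granting it, the inequality is Hölder applied coefficientwise to $(1+|n|^2)^{-1-\delta}|\widehat{\omega}(n)|^2$ in the Fourier representation of the $H^s$ norms.

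With these in place, Simon's Corollary 8 (in the stated case $r_1=2$) guarantees that any family bounded in $L^{p_0}(0,T;X)\cap W^{1,2}(0,T;Y)$ is relatively compact in $C([0,T];B)$ as soon as $\frac{\theta}{2}>\frac{1-\theta}{p_0}$. The final step is to check that the lemma's hypothesis on $p_0$ is precisely the calibration securing this threshold: substituting the recorded identity $\frac{1-\theta}{\theta}=\frac{\gamma-1-\delta}{\delta/2}$, the inequality $\frac{\theta}{2}>\frac{1-\theta}{p_0}$ becomes a lower bound on $p_0$ of the stated form $p_0>\frac{\gamma-1-\delta}{\delta/2}$. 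Since compactness of every bounded set is by definition the compactness of the embedding into $C([0,T];B)=C([0,T];H^{-1-\delta})$, the lemma follows.

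I do not anticipate any genuine obstacle: this is bookkeeping around a cited theorem rather than a new argument. The only points needing attention are the matching of exponents — confirming that the interpolation index $\theta$ and Simon's threshold combine into the clean condition stated — and keeping track of the role of the restriction $\gamma>3$. That restriction is not needed for this lemma in isolation; it is chosen so that the a priori bounds available later (the moment estimates of Lemma \ref{lemma integrabil pont vort}, together with the time regularity furnished by the weak vorticity formulation) actually place the approximating vorticities $\omega^N_\cdot$ in a set bounded in $L^{p_0}(0,T;X)\cap W^{1,2}(0,T;Y)$, so that the present compactness lemma becomes applicable.
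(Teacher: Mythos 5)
Your proposal follows exactly the paper's own route: the paper offers no separate proof of this lemma beyond the paragraph preceding it, which sets up the triple $X=H^{-1-\delta/2}$, $B=H^{-1-\delta}$, $Y=H^{-\gamma}$, records the compact dense embeddings and the interpolation inequality with $\theta=\frac{\delta/2}{\gamma-1-\delta/2}$, and invokes Corollary 8 of Simon with $r_{1}=2$. Your verification of the structural hypotheses is correct: Rellich compactness from the decay of the Fourier weights, density of $C^{\infty}$, and the interpolation inequality obtained by H\"older in the Fourier representation with the exponent fixed by $-1-\delta=\left(1-\theta\right)\left(-1-\delta/2\right)+\theta\left(-\gamma\right)$. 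Your closing remark that $\gamma>3$ is not needed for this lemma in isolation, but only for the later $W^{1,2}\left(0,T;H^{-\gamma}\right)$ bound on $\partial_{t}\omega_{t}^{N}$, is also accurate.

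There is, however, a genuine gap in your final "calibration" step. Simon's condition $\frac{\theta}{2}>\frac{1-\theta}{p_{0}}$ is equivalent to $p_{0}>\frac{2\left(1-\theta\right)}{\theta}$, and since $\frac{1-\theta}{\theta}=\frac{\gamma-1-\delta}{\delta/2}$, the threshold delivered by the corollary as quoted is $p_{0}>\frac{2\left(\gamma-1-\delta\right)}{\delta/2}$, i.e. \emph{twice} the bound in the statement; the inequality does not "become" the stated bound $p_{0}>\frac{\gamma-1-\delta}{\delta/2}$, as you claim. For $p_{0}$ in the interval $\left(\frac{\gamma-1-\delta}{\delta/2},\frac{2\left(\gamma-1-\delta\right)}{\delta/2}\right]$ your argument establishes nothing, and these cases cannot be recovered by monotonicity, because compactness at a given exponent only implies compactness at \emph{larger} exponents (bounded sets in $L^{p_{0}'}$ are bounded in $L^{p_{0}}$ for $p_{0}'\geq p_{0}$), so it is precisely the small exponents that are missing. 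To be fair, the paper's own text makes the identical leap, so this is a shared slip rather than a misreading on your part; and it is immaterial downstream, since in the tightness proof the moment bounds on $\omega_{t}^{N}$ hold for every $p_{0}$ and only "some sufficiently large $p_{0}$" is ever used. But a correct write-up should either strengthen the hypothesis of the lemma to $p_{0}>\frac{2\left(\gamma-1-\delta\right)}{\delta/2}$ or state it as a sufficient condition for large $p_{0}$; the factor $2$ comes from $r_{1}=2$, i.e. from the exponent $\theta\left(1-\frac{1}{r_{1}}\right)=\frac{\theta}{2}$ in Simon's criterion, and cannot be argued away.
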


In fact we need compactness in $\mathcal{X}$. Denote by $L^{\infty-}\left(
0,T;H^{-1-}\left(  \mathbb{T}^{2}\right)  \right)  $ the space of all
functions of class $L^{p_{0}}\left(  0,T;H^{-1-\delta}\left(  \mathbb{T}%
^{2}\right)  \right)  $ for any $p_{0}>0$ and $\delta>0$, endowed with the
metric%
\[
d_{L_{t}^{\infty-}\left(  H^{-1-}\right)  }\left(  \omega_{\cdot}%
,\omega_{\cdot}^{\prime}\right)  =\sum_{n=1}^{\infty}2^{-n}\left(  \left(
\int_{0}^{T}\left\Vert \omega_{t}-\omega_{t}^{\prime}\right\Vert
_{H^{-1-\frac{1}{n}}}^{n}\right)  ^{1/n}\wedge1\right)  .
\]
It is a simple exercise to check that:

\begin{corollary}
Let $\gamma>3$ be given. Then
\[
\mathcal{Y}:=L^{\infty-}\left(  0,T;H^{-1-}\left(  \mathbb{T}^{2}\right)
\right)  \cap W^{1,2}\left(  0,T;H^{-\gamma}\left(  \mathbb{T}^{2}\right)
\right)
\]
is compactly embedded into $\mathcal{X}$.
\end{corollary}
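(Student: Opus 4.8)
The plan is to reduce the assertion to a sequential compactness statement and then apply the preceding Lemma along a sequence $\delta \downarrow 0$ by a diagonal extraction. Recall that the topology of $\mathcal{X}$ is generated by the metric $d_{\mathcal{X}}$, so that convergence in $\mathcal{X}$ is equivalent to convergence in $C\left(\left[0,T\right];H^{-1-1/m}\left(\mathbb{T}^{2}\right)\right)$ for every $m\in\mathbb{N}$, and $\mathcal{X}$ is complete. Hence it suffices to show that every sequence $\left\{\omega^{(k)}\right\}$ drawn from a bounded family $F\subset\mathcal{Y}$ admits a subsequence converging in $C\left(\left[0,T\right];H^{-1-1/m}\left(\mathbb{T}^{2}\right)\right)$ for each $m$; the limits for different $m$ then automatically coincide, since $H^{-1-1/(m+1)}\left(\mathbb{T}^{2}\right)$ embeds continuously into $H^{-1-1/m}\left(\mathbb{T}^{2}\right)$, and the common limit lies in $\mathcal{X}$.

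First I would fix $m\geq 2$ and set $\delta=1/m\in\left(0,1\right)$. By definition, boundedness of $F$ in $\mathcal{Y}$ means that $F$ is bounded in $L^{p_{0}}\left(0,T;H^{-1-\delta/2}\left(\mathbb{T}^{2}\right)\right)$ for every finite $p_{0}$ and bounded in $W^{1,2}\left(0,T;H^{-\gamma}\left(\mathbb{T}^{2}\right)\right)$. Choosing $p_{0}=p_{0}(m)$ with $p_{0}>(\gamma-1-\delta)/(\delta/2)$, the Lemma applies and gives that $F$ is relatively compact in $C\left(\left[0,T\right];H^{-1-\delta}\left(\mathbb{T}^{2}\right)\right)=C\left(\left[0,T\right];H^{-1-1/m}\left(\mathbb{T}^{2}\right)\right)$. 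It is exactly here that the full force of $L^{\infty-}$ is used: the threshold $(\gamma-1-\delta)/(\delta/2)$ tends to $+\infty$ as $\delta\downarrow 0$, so the required temporal integrability $p_{0}(m)$ grows without bound, and only membership in \emph{every} $L^{p_{0}}$ guarantees that the hypothesis of the Lemma is available for all $m$.

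Next I would run the diagonal argument. From relative compactness in $C\left(\left[0,T\right];H^{-1-1/2}\left(\mathbb{T}^{2}\right)\right)$, extract a subsequence of $\left\{\omega^{(k)}\right\}$ converging there; from it extract a further subsequence converging in $C\left(\left[0,T\right];H^{-1-1/3}\left(\mathbb{T}^{2}\right)\right)$; iterate, producing nested subsequences, and pass to the diagonal sequence. By construction the diagonal sequence converges in $C\left(\left[0,T\right];H^{-1-1/m}\left(\mathbb{T}^{2}\right)\right)$ for every $m\geq 2$, and since $\left\Vert\cdot\right\Vert_{H^{-2}}\leq\left\Vert\cdot\right\Vert_{H^{-3/2}}$ the case $m=1$ follows as well. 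Consequently it converges in $\mathcal{X}$, which establishes the relative compactness of $F$ there.

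There is no serious obstacle here; the only delicate points are bookkeeping ones. One must confirm that ``bounded in $\mathcal{Y}$'' is precisely the conjunction of boundedness in every $L^{p_{0}}\left(0,T;H^{-1-\delta}\left(\mathbb{T}^{2}\right)\right)$ and in $W^{1,2}\left(0,T;H^{-\gamma}\left(\mathbb{T}^{2}\right)\right)$, so that the Lemma is applicable for each $\delta=1/m$, and one must verify consistency of the diagonal limits across the scale $H^{-1-1/m}\left(\mathbb{T}^{2}\right)$, which is immediate from the continuous embeddings. The essential mechanism, namely that finer regularity in the target space (as $\delta\to 0$) is purchased at the cost of higher temporal integrability, is already encoded in the Lemma and in the very definition of the space $L^{\infty-}\left(0,T;H^{-1-}\left(\mathbb{T}^{2}\right)\right)$.
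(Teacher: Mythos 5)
Your proof is correct, and it fills in exactly what the paper intends: the paper states this corollary without proof, calling it ``a simple exercise'' following from the preceding Lemma. Your argument — applying the Lemma for each $\delta=1/m$ with $p_{0}$ chosen above the threshold $\left(\gamma-1-\delta\right)/\left(\delta/2\right)$ (which is where membership in every $L^{p_{0}}$ is genuinely needed), then extracting a diagonal subsequence converging in $C\left(\left[0,T\right];H^{-1-1/m}\left(\mathbb{T}^{2}\right)\right)$ for all $m$ and hence in $\mathcal{X}$ — is the natural and intended route.
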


Let $Q^{N}$ be the law of $\omega^{N}$ on Borel subsets of $\mathcal{X}$. We
want to prove that the family $\left\{  Q^{N}\right\}  _{N\in\mathbb{N}}$ is
tight in this space. In order to prove this, it is sufficient to prove that
the family $\left\{  Q^{N}\right\}  _{N\in\mathbb{N}}$ is bounded in
probability in the space $\mathcal{Y}$ given by the previous corollary. For
this purpose, it is sufficient to prove that $\left\{  Q^{N}\right\}
_{N\in\mathbb{N}}$ is bounded in probability in $W^{1,2}\left(  0,T;H^{-\gamma
}\left(  \mathbb{T}^{2}\right)  \right)  $ and in each $L^{p_{0}}\left(
0,T;H^{-1-\delta}\left(  \mathbb{T}^{2}\right)  \right)  $, for any $p_{0}>0$
and $\delta>0$. Let us prove these conditions.

The family $\left\{  Q^{N}\right\}  _{N\in\mathbb{N}}$ is bounded in
probability in $L^{p_{0}}\left(  0,T;H^{-1-\delta}\left(  \mathbb{T}%
^{2}\right)  \right)  $ (by Chebyshev inequality)\ because%
\[
\sup_{N\in\mathbb{N}}\mathbb{E}\left[  \int_{0}^{T}\left\Vert \omega_{t}%
^{N}\right\Vert _{H^{-1-\delta}}^{p_{0}}dt\right]  <\infty.
\]
This inequality (that we could conceptually summarize as the "compactness in
space") comes from stationarity of $\omega_{t}^{N}$:%

\[
\mathbb{E}\left[  \int_{0}^{T}\left\Vert \omega_{t}^{N}\right\Vert
_{H^{-1-\delta}}^{p_{0}}dt\right]  =\int_{0}^{T}\mathbb{E}\left[  \left\Vert
\omega_{t}^{N}\right\Vert _{H^{-1-\delta}}^{p_{0}}\right]  dt\leq
C_{p_{0},\delta}T
\]
by Lemma \ref{lemma integrabil pont vort}.

To prove "compactness in time", namely the property that the family $\left\{
Q^{N}\right\}  _{N\in\mathbb{N}}$ is bounded in probability in $W^{1,2}\left(
0,T;H^{-\gamma}\left(  \mathbb{T}^{2}\right)  \right)  $, we use the equation,
in its weak vorticity formulation. We have, for all $\phi\in C^{\infty}\left(
\mathbb{T}^{2}\right)  $,
\[
\left\langle \omega_{t}^{N},\phi\right\rangle =\left\langle \omega_{0}%
^{N},\phi\right\rangle +\int_{0}^{t}\left\langle \omega_{s}^{N}\otimes
\omega_{s}^{N},H_{\phi}\right\rangle ds
\]
where $P$-a.s. the function $s\mapsto\left\langle \omega_{s}^{N}\otimes
\omega_{s}^{N},H_{\phi}\right\rangle $ is continuous (the trajectories of
point vortices are continuous and never touch the diagonal), hence, $P$-a.s.,
the function $t\mapsto\left\langle \omega_{t}^{N},\phi\right\rangle $ is
continuously differentiable and $\partial_{t}\left\langle \omega_{t}^{N}%
,\phi\right\rangle =\left\langle \omega_{t}^{N}\otimes\omega_{t}^{N},H_{\phi
}\right\rangle $. Thus%
\begin{align*}
\mathbb{E}\left[  \left\vert \partial_{t}\left\langle \omega_{t}^{N}%
,\phi\right\rangle \right\vert ^{2}\right]   & =\mathbb{E}\left[  \left\vert
\left\langle \omega_{t}^{N}\otimes\omega_{t}^{N},H_{\phi}\right\rangle
\right\vert ^{2}\right] \\
& \leq C\left\Vert H_{\phi}\right\Vert _{\infty}^{2}\leq C\left\Vert D^{2}%
\phi\right\Vert _{\infty}^{2}%
\end{align*}
by Lemma \ref{lemma integrabil pont vort}. Then we apply this inequality to
$\phi=e_{k}$ and get%
\[
\mathbb{E}\left[  \left\vert \partial_{t}\left\langle \omega_{t}^{N}%
,e_{k}\right\rangle \right\vert ^{2}\right]  \leq C\left\vert k\right\vert
^{4}.
\]
Therefore%
\begin{align*}
\mathbb{E}\left[  \int_{0}^{T}\left\Vert \partial_{t}\omega_{t}^{N}\right\Vert
_{H^{-\gamma}}^{2}dt\right]   & =\mathbb{E}\left[  \int_{0}^{T}\sum_{k}\left(
1+\left\vert k\right\vert ^{2}\right)  ^{-\gamma}\left\vert \left\langle
\partial_{t}\omega_{t}^{N},e_{k}\right\rangle \right\vert ^{2}dt\right] \\
& \leq C\mathbb{E}\left[  \int_{0}^{T}\sum_{k}\left(  1+\left\vert
k\right\vert ^{2}\right)  ^{-\gamma}\left\vert k\right\vert ^{4}dt\right]
<\infty
\end{align*}
for $2\gamma-4>2$, hence $\gamma>3$. The estimate for $\mathbb{E}\left[
\int_{0}^{T}\left\Vert \omega_{t}^{N}\right\Vert _{H^{-\gamma}}^{2}dt\right]
$ is similar to the one for "compactness in space" above. By Chebyshev
inequality, $\left\{  Q^{N}\right\}  _{N\in\mathbb{N}}$ is bounded in
probability in $W^{1,2}\left(  0,T;H^{-\gamma}\left(  \mathbb{T}^{2}\right)
\right)  $.

We have proved that the family $\left\{  Q^{N}\right\}  _{N\in\mathbb{N}}$ is
bounded in probability in $\mathcal{Y}$ and thus it is tight in $\mathcal{X}$.
From Prohorov theorem, it is relatively compact in $\mathcal{X}$. Let
$\left\{  Q^{N_{k}}\right\}  _{k\in\mathbb{N}}$ be a subsequence which
converges weakly, in $\mathcal{X}$, to a Borel probability measure $Q$. First,
convergence in $\mathcal{X}$ implies that $Q$ is invariant by time-shift
(because $Q^{N}$ is; by shift we mean shift of finite dimensional
distributions such that all involved time points are in $\left[  0,T\right]
$) and the marginal at any time is the law of white noise, by Proposition
\ref{Prop CLT} (recall that $\omega_{t}^{N}$ is stationary, hence this
proposition applies at every time).

By Skorokhod representation theorem, there exist a new probability space
$\left(  \widehat{\Xi},\widehat{\mathcal{F}},\widehat{P}\right)  $ and r.v.'s
$\widehat{\omega}^{N_{k}}$, $\widehat{\omega}$ with values in $\mathcal{X}$,
such that the laws of $\widehat{\omega}^{N_{k}}$ and $\widehat{\omega}$ are
$Q^{N_{k}}$ and $Q$ respectively, and $\widehat{\omega}^{N_{k}}$ converges
$P$-a.s. to $\widehat{\omega}$ in the topology of $\mathcal{X}$; since
$\mathcal{X}$ is made of functions of time, we may see $\widehat{\omega
}^{N_{k}}$ and $\widehat{\omega}$ as stochastic processes, $\widehat{\omega
}_{t}^{N_{k}}$ and $\widehat{\omega}_{t}$ being the result of application of
the projection at time $t$. We are going to check that $\widehat{\omega}$, or
more precisely another process closely defined, is the solution claimed by the
theorem. We already know it has trajectories of class $C\left(  \left[
0,T\right]  ;H^{-1-}\left(  \mathbb{T}^{2}\right)  \right)  $, it is time
stationary and with marginal being a white noise. We have to show that it
satisfies the equation, in the sense specified by the definitions.

We have to enlarge the probability space $\left(  \widehat{\Xi}%
,\widehat{\mathcal{F}},\widehat{P}\right)  $ to be sure it contains certain
independent r.v.'s we need in the construction. Denote by $\left(
\widetilde{\Xi},\widetilde{\mathcal{F}},\widetilde{P}\right)  $ a probability
space where, for every $N$, it is defined a random permutation $\widetilde{s}%
_{N}:\widetilde{\Xi}\rightarrow\Sigma_{N}$, uniformly distributed. Define the
new probability space%
\[
\left(  \Xi,\mathcal{F},P\right)  :=\left(  \widehat{\Xi}\times\widetilde{\Xi
},\widehat{\mathcal{F}}\otimes\widetilde{\mathcal{F}},\widehat{P}%
\otimes\widetilde{P}\right)
\]
and the new processes%
\[
\omega^{N_{k}}=\widehat{\omega}^{N_{k}}\circ\pi_{1},\qquad\omega
=\widehat{\omega}\circ\pi_{1},\qquad s_{N}=\widetilde{s}_{N}\circ\pi_{2}%
\]
where $\pi_{1}$ and $\pi_{2}$ are the projections on $\widehat{\Xi}%
\times\widetilde{\Xi}$. We adopt a little abuse of notation here, because we
indicate the final spaces and processes like the original ones, but we shall
try to clarify everywhere which ones we are investigating. Notice that the
properties of convergence and of the laws of the processes $\omega^{N_{k}}$
and $\omega$ are the same as those of $\widehat{\omega}^{N_{k}}$ and
$\widehat{\omega}$.

\begin{lemma}
\label{lemma representation}The process $\omega_{t}^{N_{k}}$ (the one on the
new probability space) can be represented in the form $\frac{1}{\sqrt{N_{k}}%
}\sum_{i=1}^{N_{k}}\xi_{i}\delta_{X_{t}^{i,N_{k}}}$, where%
\begin{equation}
\left(  \left(  \xi_{1},X_{0}^{1,N_{k}}\right)  ,...,\left(  \xi_{N_{k}}%
,X_{0}^{N_{k},N_{k}}\right)  \right) \label{initial cond}%
\end{equation}
is a random vector with law $\lambda_{N}^{0}$ and $\left(  X_{t}^{1,N_{k}%
},...,X_{t}^{N_{k},N_{k}}\right)  $ solves system (\ref{vortex system}) with
initial condition $\left(  X_{0}^{1,N_{k}},...,X_{0}^{N_{k},N_{k}}\right)  $.
\end{lemma}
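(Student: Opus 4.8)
The plan is to read off the labelled point-vortex data from the measure-valued path $\widehat{\omega}^{N_k}_{\cdot}$ produced by Skorokhod's theorem, and then to restore the correct joint law of the labels by means of the independent uniform permutation $s_{N_k}$. First I would record that, by construction, $Q^{N_k}$ is the law of a genuine point-vortex process (Proposition \ref{propos point vortices}), so it is concentrated on the Borel subset $\mathcal{P}_{N_k}\subset\mathcal{X}$ consisting of those paths $\omega_{\cdot}$ that, at every time $t$, are discrete signed measures with exactly $N_k$ atoms located at distinct points of $\mathbb{T}^2$ and carrying nonzero intensities, and whose atoms depend continuously on $t$ without ever colliding (these are the properties guaranteed by Theorem \ref{Thm foundational vortices}, together with $\xi_i\neq 0$ almost surely). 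Since the laws coincide, $\widehat{\omega}^{N_k}_{\cdot}\in\mathcal{P}_{N_k}$ holds $\widehat{P}$-almost surely, and hence the same holds for $\omega^{N_k}_{\cdot}=\widehat{\omega}^{N_k}_{\cdot}\circ\pi_1$ on the enlarged space.

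On $\mathcal{P}_{N_k}$ the configuration is recoverable: a signed measure $\frac{1}{\sqrt{N_k}}\sum_i\xi_i\delta_{X^i}$ with distinct $X^i$ and nonzero $\xi_i$ uniquely determines its support $\{X^i\}$ and the mass $\frac{1}{\sqrt{N_k}}\xi_i$ at each support point; equivalently, this is the inverse of the bijection between $\mathcal{L}C_{N_k}\mathbb{T}^2$ and $\mathcal{M}_{N_k}(\mathbb{T}^2)$ recalled at the beginning of Section \ref{section random point vortices}. Applying this identification at every time and using the continuity and non-collision encoded in $\mathcal{P}_{N_k}$, I obtain a measurable map sending $\omega^{N_k}_{\cdot}$ to a continuous path in the unlabelled configuration space $\mathcal{L}C_{N_k}\mathbb{T}^2$, that is, to a family of $N_k$ continuous non-colliding trajectories each carrying a fixed intensity. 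Next I would fix, once and for all, a Borel section of the quotient $\mathcal{L}F_{N_k}\mathbb{T}^2\to\mathcal{L}C_{N_k}\mathbb{T}^2$ (such a section exists because the $\Sigma_{N_k}$-action is free on $\Delta_{N_k}^c$), thereby assigning labels at time $0$; tracking each vortex continuously in time then propagates the labels to all $t\in[0,T]$ and yields labelled trajectories $(\xi_i,X^{i,N_k}_t)$.

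This deterministic labelling will not have the prescribed joint law, so I would relabel by the independent uniform permutation $s_{N_k}$: the final vortex carrying index $i$ is declared to be the $s_{N_k}(i)$-th vortex of the deterministic labelling. Because $\lambda^0_{N_k}$ is exchangeable and the $\Sigma_{N_k}$-action is free on $\Delta_{N_k}^c$ (distinct positions, which holds $\lambda^0_{N_k}$-almost surely by Remark \ref{remark main measure on vortices}), the conditional law of $\lambda^0_{N_k}$ on each orbit is uniform over its $N_k!$ labellings; hence applying an independent uniform permutation to any measurable choice of representative reproduces $\lambda^0_{N_k}$ exactly. This shows the time-$0$ labelled vector (\ref{initial cond}) has law $\lambda^0_{N_k}$. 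Finally, since the system (\ref{vortex system}) is equivariant under relabelling and $\omega^{N_k}_{\cdot}$ is a bona fide point-vortex path, the trajectories $(X^{i,N_k}_t)$ solve (\ref{vortex system}) with this initial condition, while $\omega^{N_k}_t=\frac{1}{\sqrt{N_k}}\sum_i\xi_i\delta_{X^{i,N_k}_t}$ holds by the very construction of the labels.

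I expect the main obstacle to be the bookkeeping of measurability: one must check that the recovery of the unlabelled trajectory, the Borel section, and the time-propagation of labels are jointly measurable in $\omega^{N_k}_{\cdot}$, so that the whole construction defines genuine random variables on $(\Xi,\mathcal{F},P)$. The conceptual core, by contrast, is the elementary fact that an exchangeable law supported on distinct-point configurations is recovered from its unlabelled image by uniform re-randomization of the labels, which is precisely what the auxiliary permutation space $(\widetilde{\Xi},\widetilde{\mathcal{F}},\widetilde{P})$ was introduced to supply.
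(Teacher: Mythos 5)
Your labelling and shuffling machinery is essentially the paper's own argument: the paper recovers the unlabelled configuration path from $\omega^{N_k}_\cdot$, fixes labels at time $0$ by an explicit Borel section (the lexicographic order on $[0,1)^2$, rather than your abstract section), propagates them continuously in time, and then composes with the independent uniform permutations $\widetilde{s}_{N_k}$, using exactly the fact you state: an exchangeable law concentrated off the generalized diagonal is uniquely determined by its unlabelled image, so uniform re-randomization of any measurable representative reproduces $\lambda_N^0$. Up to that point your proposal is correct and parallels the paper.

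There is, however, one genuine gap, in your final step. Your set $\mathcal{P}_{N_k}$ encodes only \emph{structural} properties of the path (exactly $N_k$ atoms, nonzero intensities, distinct positions, continuous non-colliding motion); it says nothing about the \emph{dynamics}. A path can perfectly well lie in $\mathcal{P}_{N_k}$ without its atoms solving (\ref{vortex system}). Your conclusion that the trajectories $\left(X_t^{i,N_k}\right)$ solve (\ref{vortex system}) rests on the phrase ``$\omega^{N_k}_\cdot$ is a bona fide point-vortex path'', but that is precisely what has to be proved for the Skorokhod copy: the Skorokhod representation only transfers properties that are expressible as almost-sure events determined by the law, and you never exhibit ``solves the vortex ODE'' as such an event. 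The paper closes this gap explicitly: it observes that the original process satisfies
\[
\mathbb{E}\left[  \sup_{t\in\left[  0,T\right]  }\left\vert \left\langle
\omega_{t}^{N_{k}},\phi\right\rangle -\left\langle \omega_{0}^{N_{k}}
,\phi\right\rangle -\int_{0}^{t}\int_{\mathbb{T}^{2}}\int_{\mathbb{T}^{2}
}\nabla\phi\left(  x\right)  \cdot K\left(  x-y\right)  \omega_{s}^{N_{k}
}\left(  dx\right)  \omega_{s}^{N_{k}}\left(  dy\right)  ds\right\vert
\wedge1\right]  =0
\]
for every test function $\phi$; since the left-hand side is the expectation of a bounded measurable functional of the path, it is a functional of the law alone, hence vanishes also for the new process. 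This gives the weak vorticity formulation $\widehat{P}$-a.s. simultaneously for a countable dense family of $\phi$, and only then, using the atomic representation already established, does one deduce that the positions satisfy the ODE system. To repair your proof you must either insert this law-transfer argument, or prove directly that the set of paths in $\mathcal{P}_{N_k}$ whose (continuously tracked) atoms solve (\ref{vortex system}) is a Borel subset of $\mathcal{X}$, so that concentration of $Q^{N_k}$ on it passes to the Skorokhod copy; neither is automatic from what you wrote.
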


\begin{proof}
\textbf{Step 1}. Let us list a few preliminary facts; we omit some detail in
the proofs; we extensively use the notations at the beginning of Section
\ref{section random point vortices}.

Identify for a second $\mathbb{T}^{2}$ with $[0,1)^{2}$. On $[0,1)^{2}$,
consider the lexicographic order: $x=\left(  a,b\right)  $ is smaller than
$y=\left(  c,d\right)  $ either if $a<c$ or if $a=c$ but $b<d$. It is a total
order. We write $<_{L}$ for the strict lexicographic order just defined. Let
us denote by $\mathcal{L}\Lambda_{N}^{1}\subset\mathcal{L}F_{N}\mathbb{T}^{2}$
the set of strings $\left(  \left(  \xi_{1},x_{1}\right)  ,...,\left(  \xi
_{N},x_{N}\right)  \right)  $ such that $x_{1}<_{L}...<_{L}x_{N}$, with
$x_{i}$ seen as elements of $[0,1)^{2}$. The set $\mathcal{L}F_{N}%
\mathbb{T}^{2}$ is partitioned in $N!$ subsets $\mathcal{L}\Lambda_{N}%
^{1},...,\mathcal{L}\Lambda_{N}^{N!}$ obtained applying to $\mathcal{L}%
\Lambda_{N}^{1}$ each one of the $N!$ permutations of indexes.

Given $\omega\in\mathcal{M}_{N}\left(  \mathbb{T}^{2}\right)  $, there is a
unique element $\left\{  \left(  \xi_{i},x_{i}\right)  ,i=1,..,N\right\}
\in\mathcal{L}C_{N}\mathbb{T}^{2}=\mathcal{L}F_{N}\mathbb{T}^{2}/\Sigma_{N}$
such that $\omega=\frac{1}{\sqrt{N}}\sum_{i=1}^{N}\xi_{i}\delta_{x_{i}}$.
Notice that the indexing $i=1,..,N$ here, a priori, is not canonical. However,
we may use the\ lexicographic order, and the fact that point are disjoint,
to\ attribute the indexes $i=1,..,N$ to the elements of the set $\left\{
\left(  \xi_{i},x_{i}\right)  ,i=1,..,N\right\}  $, in such a way that
$\left(  \left(  \xi_{1},x_{1}\right)  ,...,\left(  \xi_{N},x_{N}\right)
\right)  \in\mathcal{L}\Lambda_{N}^{1}$. This way, we have uniquely defined
maps $\omega\overset{h_{1}}{\mapsto}\left(  \xi_{1},x_{1}\right)  $, ...,
$\omega\overset{h_{N}}{\mapsto}\left(  \xi_{N},x_{N}\right)  $, from
$\mathcal{M}_{N}\left(  \mathbb{T}^{2}\right)  $ to $\mathbb{R\times T}^{2}$.

On $\mathcal{M}_{N}\left(  \mathbb{T}^{2}\right)  \subset H^{-1-}\left(
\mathbb{T}^{2}\right)  $ let us put the topology induced by $d_{H^{-1-}}$ and
consider the functions of class $C\left(  \left[  0,T\right]  ;\mathcal{M}%
_{N}\left(  \mathbb{T}^{2}\right)  \right)  $. The set $\mathcal{M}_{N}\left(
\mathbb{T}^{2}\right)  $ is measurable in $H^{-1-}\left(  \mathbb{T}%
^{2}\right)  $, and the set $C\left(  \left[  0,T\right]  ;\mathcal{M}%
_{N}\left(  \mathbb{T}^{2}\right)  \right)  $ is measurable in $C\left(
\left[  0,T\right]  ;H^{-1-}\left(  \mathbb{T}^{2}\right)  \right)  $ (the
proof is not difficult arguing on suitable close subfamilies of $\mathcal{M}%
_{N}\left(  \mathbb{T}^{2}\right)  $, constrained by the minimal distance
between elements in the support).

If $\omega_{t}^{N}=\frac{1}{\sqrt{N}}\sum_{i=1}^{N}\xi_{i}\delta_{X_{t}^{i,N}%
}$ comes from the vortex point dynamics with an initial condition such that
coalescence does not occur, then $\omega_{\cdot}^{N}\in C\left(  \left[
0,T\right]  ;\mathcal{M}_{N}\left(  \mathbb{T}^{2}\right)  \right)  $: to
prove this, one has to use the embedding of $H^{-1-}\left(  \mathbb{T}%
^{2}\right)  $ into H\"{o}lder continuous functions, in evaluating
\[
\sup_{\left\Vert \phi\right\Vert _{H^{-1-\delta}}\leq1}\left\vert \sum
_{i=1}^{N}\xi_{i}\left(  \phi\left(  X_{t}^{i,N}\right)  -\phi\left(
X_{s}^{i,N}\right)  \right)  \right\vert .
\]

Conversely, if $\omega_{\cdot}^{N}\in C\left(  \left[  0,T\right]
;\mathcal{M}_{N}\left(  \mathbb{T}^{2}\right)  \right)  $, then there exist
functions $x_{\cdot}^{i,N}\in C\left(  \left[  0,T\right]  ;\mathbb{T}%
^{2}\right)  $ and numbers $\xi_{i}$, $i=1,...,N$, such that $\omega_{t}%
^{N}=\frac{1}{\sqrt{N}}\sum_{i=1}^{N}\xi_{i}\delta_{x_{t}^{i,N}}$; the lengthy
proof requires identification of these functions locally in time by means of
very concentrated test functions. The indexing $i=1,...,N$ of this functions
however cannot correspond to lexicographic order: to have lexicographic order
at every time we should accept jumps in time (these jumps occur every time the
first coordinates of two points exchange their order, also due to the
difference between $\mathbb{T}^{2}$ and $[0,1)^{2}$). Let us impose
lexicographic order only at time $t=0$ (in doing so there is no problem to
identify $\mathbb{T}^{2}$ with $[0,1)^{2}$) and then accept that particles
exchange lexicographic order later in time, with the advantage that $x_{\cdot
}^{i,N}\in C\left(  \left[  0,T\right]  ;\mathbb{T}^{2}\right)  $. Thus we
have uniquely defined the maps $\omega_{\cdot}^{N}$ $\overset{\widetilde{h}%
_{1}}{\mapsto}\left(  \xi_{1},x_{\cdot}^{1,N}\right)  $, ..., $\omega_{\cdot
}^{N}$ $\overset{\widetilde{h}_{N}}{\mapsto}\left(  \xi_{N},x_{\cdot}%
^{N,N}\right)  $ from $C\left(  \left[  0,T\right]  ;\mathcal{M}_{N}\left(
\mathbb{T}^{2}\right)  \right)  $\ to $\mathbb{R\times}C\left(  \left[
0,T\right]  ;\mathbb{T}^{2}\right)  $: at time zero we impose $x_{0}%
^{1,N}<_{L}...<_{L}x_{0}^{N,N}$ (at later times this may be not true anymore).
These maps are measurable.

Finally let us discuss the last preliminary fact we need below. Given a
probability measure $\rho$ on $\mathcal{L}F_{N}\mathbb{T}^{2}$, assume it is
exchangeable, namely its law is invariant by permutation of the indexes; it is
thus uniquely determined by its restriction to $\mathcal{L}\Lambda_{N}^{1}$.
Consider $\rho$ restricted to $\mathcal{L}\Lambda_{N}^{1}$, remormalized by
$N!$ so to be a probability measure; call $\widehat{\rho}$ such measure. We
have a one-to-one correspondence between $\rho$ and $\widehat{\rho}$, measures
on $\mathcal{L}F_{N}\mathbb{T}^{2}$ and $\mathcal{L}\Lambda_{N}^{1}$ respectively.

In particular, given a measure $\widehat{\rho}$ on $\mathcal{L}\Lambda_{N}%
^{1}$, we may reconstruct an exchangeable measure on $\mathcal{L}%
F_{N}\mathbb{T}^{2}$, the unique one that restricted to $\mathcal{L}%
\Lambda_{N}^{1}$ gives values proportional to $\widehat{\rho}$ up to $N!$.
Assume more, namely that $\widehat{\rho}$ on $\mathcal{L}\Lambda_{N}^{1}$ is
the law of a vector $\left(  \left(  \widehat{\xi}_{1},\widehat{X}_{1}\right)
,...,\left(  \widehat{\xi}_{N},\widehat{X}_{N}\right)  \right)  $, defined on
a probability space $\left(  \widehat{\Xi},\widehat{\mathcal{F}}%
,\widehat{P}\right)  $. Enlarge the probability space as described before the
lemma, incorporating independent permutations $\widetilde{s}_{N}%
:\widetilde{\Xi}\rightarrow\Sigma_{N}$. On the product space $\left(
\Xi,\mathcal{F},P\right)  $, with the notations above plus $\left(  \xi
_{i},X_{i}\right)  =\left(  \widehat{\xi}_{i},\widehat{X}_{i}\right)  \circ
\pi_{1}$, consider the new vector%
\[
\left(  \left(  \xi_{1}^{\ast},X_{1}^{\ast}\right)  ,...,\left(  \xi_{N}%
^{\ast},X_{N}^{\ast}\right)  \right)  :=\left(  \left(  \xi_{\widetilde{s}%
_{N}\left(  1\right)  },X_{\widetilde{s}_{N}\left(  1\right)  }\right)
,...,\left(  \xi_{\widetilde{s}_{N}\left(  N\right)  },X_{\widetilde{s}%
_{N}\left(  N\right)  }\right)  \right)  .
\]
This vector takes values in $\mathcal{L}F_{N}\mathbb{T}^{2}$, not in
$\mathcal{L}\Lambda_{N}^{1}$ as the previous one $\left(  \left(
\widehat{\xi}_{1},\widehat{X}_{1}\right)  ,...,\left(  \widehat{\xi}%
_{N},\widehat{X}_{N}\right)  \right)  $. We claim its law is $\rho$, in the
correspondence $\rho\leftrightarrow\widehat{\rho}$ described above. Indeed,
$\left(  \left(  \xi_{1}^{\ast},X_{1}^{\ast}\right)  ,...,\left(  \xi
_{N}^{\ast},X_{N}^{\ast}\right)  \right)  $ is exchangeable, because given a
single deterministic permutation $s$, $\widetilde{s}_{N}\circ s$ is uniformly
distributed. And conditioning to have $X_{\widetilde{s}_{N}\left(  1\right)
}<_{L}...<_{L}X_{\widetilde{s}_{N}\left(  N\right)  }$ is like conditioning to
have $\widetilde{s}_{N}=id$, which gives $\widehat{\rho}$. Let us call
\textit{shuffling} the procedure illustrated here of composition with
independent permutations, to get the exchangeable distribution from a
distribution on $\mathcal{L}\Lambda_{N}^{1}$.

\textbf{Step 2}. Now let us prove the lemma. The law of $\widehat{\omega
}_{\cdot}^{N_{k}}$, being the same as the law of the original process, is
concentrated on $C\left(  \left[  0,T\right]  ;\mathcal{M}_{N_{k}}\left(
\mathbb{T}^{2}\right)  \right)  $. Hence, by the measurable maps
$\widetilde{h}_{i}$ described above, it defines random elements $\left(
\widehat{\xi}_{1},\widehat{X}_{\cdot}^{1,N_{k}}\right)  $, ..., $\left(
\widehat{\xi}_{N_{k}},\widehat{X}_{\cdot}^{N_{k},N_{k}}\right)  $ in
$\mathbb{R\times}C\left(  \left[  0,T\right]  ;\mathbb{T}^{2}\right)  $. One
has $\widehat{\omega}_{t}^{N_{k}}=\frac{1}{\sqrt{N_{k}}}\sum_{i=1}^{N_{k}%
}\widehat{\xi}_{i}\delta_{\widehat{X}_{t}^{i,N_{k}}}$; therefore we have
proved a first claim of the lemma (in fact we shall redefine the random vector
but the redefinition will not change this statement). We still have to prove
that $\lambda_{N}^{0}$ is the law of (\ref{initial cond}) (in fact we still
have to define properly (\ref{initial cond})) and $\left(  \widehat{X}%
_{t}^{1,N_{k}},...,\widehat{X}_{t}^{1,N_{k}}\right)  $ solves system
(\ref{vortex system}).

Since the original process $\omega_{\cdot}^{N_{k}}$ had the property that%
\[
\mathbb{E}\left[  \sup_{t\in\left[  0,T\right]  }\left\vert \left\langle
\omega_{t}^{N_{k}},\phi\right\rangle -\left\langle \omega_{0}^{N_{k}}%
,\phi\right\rangle -\int_{0}^{t}\int_{\mathbb{T}^{2}}\int_{\mathbb{T}^{2}%
}\nabla\phi\left(  x\right)  \cdot K\left(  x-y\right)  \omega_{s}^{N_{k}%
}\left(  dx\right)  \omega_{s}^{N_{k}}\left(  dy\right)  ds\right\vert
\wedge1\right]  =0
\]
for every $\phi\in C^{\infty}\left(  \mathbb{T}^{2}\right)  $, the same
property holds for the new process $\widehat{\omega}_{t}^{N_{k}}$ (because
they have the same law), hence $\widehat{P}$-a.s. it holds%
\[
\sup_{t\in\left[  0,T\right]  }\left\vert \left\langle \widehat{\omega}%
_{t}^{N_{k}},\phi\right\rangle -\left\langle \widehat{\omega}_{0}^{N_{k}}%
,\phi\right\rangle -\int_{0}^{t}\int_{\mathbb{T}^{2}}\int_{\mathbb{T}^{2}%
}\nabla\phi\left(  x\right)  \cdot K\left(  x-y\right)  \widehat{\omega}%
_{s}^{N_{k}}\left(  dx\right)  \widehat{\omega}_{s}^{N_{k}}\left(  dy\right)
ds\right\vert =0
\]
on a dense countable set of $\phi\in C^{\infty}\left(  \mathbb{T}^{2}\right)
$, which implies (using the structure $\widehat{\omega}_{t}^{N_{k}}=\frac
{1}{\sqrt{N_{k}}}\sum_{i=1}^{N_{k}}\widehat{\xi}_{i}\delta_{\widehat{X}%
_{t}^{i,N_{k}}}$)\ that $\left(  \widehat{X}_{t}^{1,N_{k}},...,\widehat{X}%
_{t}^{N_{k},N_{k}}\right)  $ satisfies (\ref{vortex system}). Below we shall
redefine this process but the redefinition will not change this property.

It remains to understand the law of (\ref{initial cond}). We have constructed
the random vector $\left(  \left(  \widehat{\xi}_{1},\widehat{X}_{0}^{1,N_{k}%
}\right)  ,...,\left(  \widehat{\xi}_{N_{k}},\widehat{X}_{0}^{N_{k},N_{k}%
}\right)  \right)  $, with $\widehat{X}_{0}^{1,N}<_{L}...<_{L}\widehat{X}%
_{0}^{N,N}$. We apply the shuffling procedure described at the end of Step 1,
hence redefining all r.v.'s and processes by composition with random
permutations. The result is an initial random vector of the form
(\ref{initial cond}) and the associated process $\left(  X_{t}^{1,N_{k}%
},...,X_{t}^{N_{k},N_{k}}\right)  $. The modifications introduced by shuffling
do not change the representation $\omega_{t}^{N_{k}}=\frac{1}{\sqrt{N_{k}}%
}\sum_{i=1}^{N_{k}}\xi_{i}\delta_{X_{t}^{i,N_{k}}}$ (now $\omega_{t}^{N_{k}}$
is the process defined before the lemma) and the fact that $\left(
X_{t}^{1,N_{k}},...,X_{t}^{N_{k},N_{k}}\right)  $ solves system
(\ref{vortex system}). We claim that the new initial random vector
(\ref{initial cond}) has law $\lambda_{N}^{0}$. By construction the vector
(\ref{initial cond}) is exchangeable and its law is the unique exchangeable
law on $\mathcal{L}F_{N}\mathbb{T}^{2}$ corresponding to a certain probability
measure $\widehat{\rho}$ on $\mathcal{L}\Lambda_{N}^{1}$ that we now describe.
Since $\lambda_{N}^{0}$ has this property, we deduce that $\lambda_{N}^{0}$ is
the law of (\ref{initial cond}). Let us describe $\widehat{\rho}$. It is the
law of $\left(  \left(  \widehat{\xi}_{1},\widehat{X}_{0}^{1,N_{k}}\right)
,...,\left(  \widehat{\xi}_{N_{k}},\widehat{X}_{0}^{N_{k},N_{k}}\right)
\right)  $, random vector constructed through the unique maps $h_{i}$, hence
$\widehat{\rho}$ is the push forward under $\left(  h_{1},...,h_{N}\right)  $
of the law of $\omega_{0}^{N_{k}}$; call it $\pi_{t=0}Q^{N_{k}}$. These
correspondences are bijections and, as already said, if we start by
$\lambda_{N}^{0}$ and push it forward (in opposite direction) to a law on
$\omega_{0}^{N_{k}}$ we find $\pi_{t=0}Q^{N_{k}}$. Thus we have the identification.
\end{proof}

Given $\phi\in C^{\infty}\left(  \mathbb{T}^{2}\right)  $ and $t\in\left[
0,T\right]  $, we are going to prove that
\[
E\left[  \left\vert \left\langle \omega_{t},\phi\right\rangle -\left\langle
\omega_{0},\phi\right\rangle -\int_{0}^{t}\left\langle H_{\phi},\omega
_{s}\otimes\omega_{s}\right\rangle ds\right\vert \wedge1\right]  =0.
\]
This implies that $\left\langle \omega_{t},\phi\right\rangle =\left\langle
\omega_{0},\phi\right\rangle -\int_{0}^{t}\left\langle H_{\phi},\omega
_{s}\otimes\omega_{s}\right\rangle ds$ with $P$-probability one, at time $t $.
Since the processes involved are continuous, this implies that the identity
holds uniformly in time, with $P$-probability one.

Based on the identity%
\[
\left\langle \omega_{t}^{N_{k}},\phi\right\rangle -\left\langle \omega
_{0}^{N_{k}},\phi\right\rangle -\int_{0}^{t}\left\langle H_{\phi},\omega
_{s}^{N_{k}}\otimes\omega_{s}^{N_{k}}\right\rangle ds=0
\]
and the general fact that $\left\vert x+y\right\vert \wedge1\leq\left(
\left\vert x\right\vert \wedge1\right)  +\left(  \left\vert y\right\vert
\wedge1\right)  $, one has the inequality%

\begin{align*}
& E\left[  \left\vert \left\langle \omega_{t},\phi\right\rangle -\left\langle
\omega_{0},\phi\right\rangle -\int_{0}^{t}\left\langle H_{\phi},\omega
_{s}\otimes\omega_{s}\right\rangle ds\right\vert \wedge1\right] \\
& \leq E\left[  \left(  \left\vert \left\langle \omega_{t},\phi\right\rangle
-\left\langle \omega_{t}^{N_{k}},\phi\right\rangle \right\vert \right)
\wedge1\right]  +E\left[  \left(  \left\vert \left\langle \omega_{0}%
,\phi\right\rangle -\left\langle \omega_{0}^{N_{k}},\phi\right\rangle
\right\vert \right)  \wedge1\right] \\
& +E\left[  \left(  \left\vert \int_{0}^{t}\left\langle H_{\phi},\omega
_{s}^{N_{k}}\otimes\omega_{s}^{N_{k}}\right\rangle ds-\int_{0}^{t}\left\langle
H_{\phi},\omega_{s}\otimes\omega_{s}\right\rangle ds\right\vert \right)
\wedge1\right]  .
\end{align*}
We have, for $\phi\in C^{\infty}\left(  \mathbb{T}^{2}\right)  $ and
$t\in\left[  0,T\right]  $,
\[
\lim_{k\rightarrow\infty}E\left[  \left(  \left\vert \left\langle \omega
_{t},\phi\right\rangle -\left\langle \omega_{t}^{N_{k}},\phi\right\rangle
\right\vert \right)  \wedge1\right]  =0
\]
simply because we have a.s. convergence in $C\left(  \left[  0,T\right]
;H^{-1-\delta}\left(  \mathbb{T}^{2}\right)  \right)  $. Hence it remains to
prove%
\[
\lim_{k\rightarrow\infty}E\left[  \left(  \left\vert \int_{0}^{t}\left\langle
H_{\phi},\omega_{s}^{N_{k}}\otimes\omega_{s}^{N_{k}}\right\rangle ds-\int%
_{0}^{t}\left\langle H_{\phi},\omega_{s}\otimes\omega_{s}\right\rangle
ds\right\vert \right)  \wedge1\right]  =0
\]
which is the most demanding part of the passage to the limit. Let us consider
a smooth (of class $H^{2+}$ is sufficient) approximation $H_{\phi}^{\delta}$
of $H_{\phi}$, $\delta>0$, with the property $H_{\phi}^{\delta}\left(
x,x\right)  =0$ (see Remark \ref{remark existence}). We have
\[
\lim_{n\rightarrow\infty}E\left[  \left(  \left\vert \int_{0}^{t}\left\langle
H_{\phi}^{\delta},\omega_{s}^{N_{k}}\otimes\omega_{s}^{N_{k}}\right\rangle
ds-\int_{0}^{t}\left\langle H_{\phi}^{\delta},\omega_{s}\otimes\omega
_{s}\right\rangle ds\right\vert \right)  \wedge1\right]  =0
\]
again because of a.s. convergence of $\omega^{N_{k}}$ to $\omega$ in $C\left(
\left[  0,T\right]  ;H^{-1-}\left(  \mathbb{T}^{2}\right)  \right)  $ and thus
of $\omega^{N_{k}}\otimes\omega^{N_{k}}$ to $\omega\otimes\omega$ in $C\left(
\left[  0,T\right]  ;H^{-2-}\left(  \mathbb{T}^{2}\times\mathbb{T}^{2}\right)
\right)  $. Therefore%
\begin{align*}
& \underset{k\rightarrow\infty}{\lim\sup}E\left[  \left(  \left\vert \int%
_{0}^{t}\left\langle H_{\phi},\omega_{s}^{N_{k}}\otimes\omega_{s}^{N_{k}%
}\right\rangle ds-\int_{0}^{t}\left\langle H_{\phi},\omega_{s}\otimes
\omega_{s}\right\rangle ds\right\vert \right)  \wedge1\right] \\
& \leq E\left[  \left(  \left\vert \int_{0}^{t}\left\langle H_{\phi}-H_{\phi
}^{\delta},\omega_{s}\otimes\omega_{s}\right\rangle ds\right\vert \right)
\wedge1\right]  +\underset{k\in\mathbb{N}}{\sup}E\left[  \left(  \left\vert
\int_{0}^{t}\left\langle H_{\phi}-H_{\phi}^{\delta},\omega_{s}^{N_{k}}%
\otimes\omega_{s}^{N_{k}}\right\rangle ds\right\vert \right)  \wedge1\right]
.
\end{align*}

We know that%
\begin{align*}
& E\left[  \left(  \left\vert \int_{0}^{t}\left\langle H_{\phi}-H_{\phi
}^{\delta},\omega_{s}\otimes\omega_{s}\right\rangle ds\right\vert \right)
\wedge1\right]  \leq\int_{0}^{t}E\left[  \left\vert \left\langle H_{\phi
}-H_{\phi}^{\delta},\omega_{s}\otimes\omega_{s}\right\rangle \right\vert
\right]  ds\\
& \leq C\int_{0}^{t}E\left[  \left\vert \left\langle H_{\phi}-H_{\phi}%
^{\delta},\omega_{s}\otimes\omega_{s}\right\rangle \right\vert ^{2}\right]
^{1/2}ds
\end{align*}
and the last term is arbitrarily small with $\delta$, due to Corollary
\ref{corollary WN} (a little argument is needed because $H_{\phi}-H_{\phi
}^{\delta}$ is not smooth but the computation is similar to the Cauchy
property of Theorem \ref{Thm Cauchy}). It remain to show that
\[
E\left[  \left(  \left\vert \int_{0}^{t}\left\langle H_{\phi}-H_{\phi}%
^{\delta},\omega_{s}^{N_{k}}\otimes\omega_{s}^{N_{k}}\right\rangle
ds\right\vert \right)  \wedge1\right]
\]
is small for small $\delta$, uniformly in $k$. But this case is similar to the
previous one, using now Lemma \ref{lemma integrabil pont vort}. The proof is complete.

\section{Proof of Theorem \ref{Thm AC rho}}

Recall the definitions of $\lambda_{N}^{0}\left(  d\theta\right)  $,
$\mathcal{T}_{N}$, $\mu_{N}^{0}\left(  d\omega\right)  $ from Remark
\ref{remark main measure on vortices}.

\begin{lemma}
Given a measurable function $\rho:H^{-1-\delta}\left(  \mathbb{T}^{2}\right)
\rightarrow\lbrack0,\infty)$ such that $\int_{H^{-1-\delta}\left(
\mathbb{T}^{2}\right)  }\rho\left(  \omega\right)  \mu_{N}^{0}\left(
d\omega\right)  <\infty$, the measure $\lambda_{N}^{\rho}\left(
d\theta\right)  :=\rho\left(  \mathcal{T}_{N}\left(  \theta\right)  \right)
\lambda_{N}^{0}\left(  d\theta\right)  $ on $\left(  \mathbb{R}\times
\mathbb{T}^{2}\right)  ^{N}$ has the property that its image measure $\mu
_{N}^{\rho}\left(  d\omega\right)  $ on $H^{-1-\delta}\left(  \mathbb{T}%
^{2}\right)  $ under the map $\mathcal{T}_{N}$ is $\rho\left(  \omega\right)
\mu_{N}^{0}\left(  d\omega\right)  $.
\end{lemma}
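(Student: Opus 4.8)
The plan is to verify the claimed identity of measures by testing against an arbitrary nonnegative measurable function and applying the image-measure (change of variables) formula twice. First I would record the elementary fact that, since $\mathcal{T}_N$ is measurable and $\rho$ is measurable, the weight $\rho\circ\mathcal{T}_N$ is measurable, so $\lambda_N^\rho$ is a well-defined nonnegative measure on $\left(\mathbb{R}\times\mathbb{T}^2\right)^N$; moreover its total mass is finite because, by the definition $\mu_N^0=\left(\mathcal{T}_N\right)_\ast\lambda_N^0$ and the image-measure formula, $\int\rho\left(\mathcal{T}_N(\theta)\right)\lambda_N^0(d\theta)=\int\rho(\omega)\,\mu_N^0(d\omega)<\infty$ by hypothesis. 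Hence $\mu_N^\rho=\left(\mathcal{T}_N\right)_\ast\lambda_N^\rho$ is a well-defined finite measure on $H^{-1-\delta}\left(\mathbb{T}^2\right)$.

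The core of the argument is then a two-line computation. Let $\Phi:H^{-1-\delta}\left(\mathbb{T}^2\right)\to[0,\infty)$ be an arbitrary nonnegative measurable function. By the definition of $\mu_N^\rho$ as the image of $\lambda_N^\rho$ under $\mathcal{T}_N$, followed by the definition of $\lambda_N^\rho$, and finally the image-measure formula for $\mu_N^0=\left(\mathcal{T}_N\right)_\ast\lambda_N^0$ applied to the product $\Phi\cdot\rho$, one obtains
\begin{align*}
\int_{H^{-1-\delta}}\Phi(\omega)\,\mu_N^\rho(d\omega)
&=\int_{\left(\mathbb{R}\times\mathbb{T}^2\right)^N}\Phi\left(\mathcal{T}_N(\theta)\right)\lambda_N^\rho(d\theta)\\
&=\int_{\left(\mathbb{R}\times\mathbb{T}^2\right)^N}\Phi\left(\mathcal{T}_N(\theta)\right)\rho\left(\mathcal{T}_N(\theta)\right)\lambda_N^0(d\theta)\\
&=\int_{H^{-1-\delta}}\Phi(\omega)\,\rho(\omega)\,\mu_N^0(d\omega).
\end{align*}
Since this holds for every nonnegative measurable $\Phi$, the two measures $\mu_N^\rho(d\omega)$ and $\rho(\omega)\,\mu_N^0(d\omega)$ agree, which is the assertion.

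I do not expect any real obstacle here: the statement is an instance of the standard rule that weighting a measure by a function and then pushing it forward is the same as pushing forward and weighting by the transported function, the transport being trivial because the weight $\rho$ is already given precomposed as $\rho\circ\mathcal{T}_N$. The only points that merit a word are the measurability of $\rho\circ\mathcal{T}_N$ and the finiteness of the total mass, both noted above; the integrability hypothesis $\int\rho\,d\mu_N^0<\infty$ is exactly what guarantees the latter.
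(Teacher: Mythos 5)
Your proof is correct and follows essentially the same route as the paper: the same three-step change-of-variables computation against an arbitrary nonnegative measurable test function, using the definitions of $\mu_{N}^{\rho}$, $\lambda_{N}^{\rho}$, and the pushforward identity $\mu_{N}^{0}=\left(\mathcal{T}_{N}\right)_{\ast}\lambda_{N}^{0}$. The preliminary remarks on measurability of $\rho\circ\mathcal{T}_{N}$ and finiteness of the total mass are a sensible addition that the paper leaves implicit.
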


\begin{proof}
By definition of $\mu_{N}^{\rho}\left(  d\omega\right)  $ and $\lambda
_{N}^{\rho}\left(  d\theta\right)  $, for every non-negative measurable
function $F$ we have
\begin{align*}
\int_{H^{-1-\delta}\left(  \mathbb{T}^{2}\right)  }F\left(  \omega\right)
\mu_{N}^{\rho}\left(  d\omega\right)   & =\int_{\mathbb{R}^{N}\times
\mathbb{R}^{2N}}F\left(  \mathcal{T}_{N}\left(  \theta\right)  \right)
\lambda_{N}^{\rho}\left(  d\theta\right) \\
& =\int_{\mathbb{R}^{N}\times\mathbb{R}^{2N}}F\left(  \mathcal{T}_{N}\left(
\theta\right)  \right)  \rho\left(  \mathcal{T}_{N}\left(  \theta\right)
\right)  \lambda_{N}^{0}\left(  d\theta\right) \\
& =\int_{H^{-1-\delta}\left(  \mathbb{T}^{2}\right)  }F\left(  \omega\right)
\rho\left(  \omega\right)  \mu_{N}^{0}\left(  d\omega\right)  .
\end{align*}

\end{proof}

We may now prove Theorem \ref{Thm AC rho}. Given $\rho_{0}\in C_{b}\left(
H^{-1-}\left(  \mathbb{T}^{2}\right)  \right)  $, $\rho_{0}\geq0$, $\int%
\rho_{0}d\mu=1$ ($\mu$ here is the white noise Gaussian law on $H^{-1-}\left(
\mathbb{T}^{2}\right)  $), there is a constant $C_{N}>0$ such that $C_{N}%
\int_{H^{-1-\delta}\left(  \mathbb{T}^{2}\right)  }\rho_{0}\left(
\omega\right)  \mu_{N}^{0}\left(  d\omega\right)  =1$, for any $\delta>0$.
Since $\mu_{N}^{0}$ converges weakly to $\mu$ on $H^{-1-\delta}\left(
\mathbb{T}^{2}\right)  $\ and $\rho_{0}$ is continuous and bounded on
$H^{-1-\delta}\left(  \mathbb{T}^{2}\right)  $, we deduce $\lim_{N\rightarrow
\infty}C_{N}=1$. Let us consider, on Borel sets of $\left(  \mathbb{R}%
\times\mathbb{T}^{2}\right)  ^{N}$, the finite positive measure $C_{N}\rho
_{0}\left(  \mathcal{T}_{N}\left(  \theta\right)  \right)  \lambda_{N}%
^{0}\left(  d\theta\right)  $. By the lemma, its image measure on
$H^{-1-\delta}\left(  \mathbb{T}^{2}\right)  $ under the map $\mathcal{T}_{N}$
is $C_{N}\rho_{0}\left(  \omega\right)  \mu_{N}^{0}\left(  d\omega\right)  $
(we apply the lemma to $\rho\left(  \omega\right)  :=C_{N}\rho_{0}\left(
\omega\right)  $). The point vortex dynamics is well defined for a.e. $\left(
\left(  \xi_{1},X_{0}^{1}\right)  ,...,\left(  \xi_{N},X_{0}^{N}\right)
\right)  \in\left(  \mathbb{R}\times\mathbb{T}^{2}\right)  ^{N}$ with respect
to $C_{N}\rho_{0}\left(  \mathcal{T}_{N}\left(  \theta\right)  \right)
\lambda_{N}^{0}\left(  d\theta\right)  $, because this fact holds for
$\lambda_{N}^{0}\left(  d\theta\right)  $.\ Denote by $\omega_{t}^{N}$ the
vorticity of this point vortex dynamics; the law of $\omega_{0}^{N}$ is
$C_{N}\rho_{0}\left(  \omega\right)  \mu_{N}^{0}\left(  d\omega\right)  $.

Denote by $\Phi_{t}^{N}$ the map in $H^{-1-}\left(  \mathbb{T}^{2}\right)  $,
defined a.s. with respect to $\mu_{N}^{0}$, which gives $\omega_{t}^{N}%
=\Phi_{t}^{N}\omega_{0}^{N}$. The law of $\omega_{t}^{N}$ has the form%
\[
C_{N}\rho_{0}\left(  \left(  \Phi_{t}^{N}\right)  ^{-1}\left(  \omega\right)
\right)  \mu_{N}^{0}\left(  d\omega\right)
\]
where $\left(  \Phi_{t}^{N}\right)  ^{-1}$ is the inverse map of $\Phi_{t}%
^{N}$ and it is defined for $\mu_{N}^{0}$-a.e. $\omega\in H^{-1-}\left(
\mathbb{T}^{2}\right)  $. Indeed, for every non-negative measurable function
$F$ we have%
\begin{align*}
\mathbb{E}\left[  F\left(  \omega_{t}^{N}\right)  \right]   & =\mathbb{E}%
\left[  F\left(  \Phi_{t}^{N}\omega_{0}^{N}\right)  \right]  =\int%
_{H^{-1-\delta}\left(  \mathbb{T}^{2}\right)  }F\left(  \Phi_{t}%
^{N}\mathcal{\omega}\right)  C_{N}\rho_{0}\left(  \mathcal{\omega}\right)
\mu_{N}^{0}\left(  d\omega\right) \\
& =\int_{H^{-1-\delta}\left(  \mathbb{T}^{2}\right)  }F\left(  \mathcal{\omega
}\right)  C_{N}\rho_{0}\left(  \left(  \Phi_{t}^{N}\right)  ^{-1}\left(
\mathcal{\omega}\right)  \right)  \left(  \Phi_{t}^{N}\right)  _{\ast}\mu
_{N}^{0}\left(  d\omega\right)
\end{align*}
but $\left(  \Phi_{t}^{N}\right)  _{\ast}\mu_{N}^{0}=\mu_{N}^{0}$, see
Proposition \ref{propos point vortices}.

Therefore, for every non-negative measurable function $F$ on $H^{-1-}\left(
\mathbb{T}^{2}\right)  $, one has
\[
\mathbb{E}\left[  F\left(  \omega_{t}^{N}\right)  \right]  =\mathbb{E}\left[
C_{N}\rho_{0}\left(  \left(  \Phi_{t}^{N}\right)  ^{-1}\left(  \omega_{WN}%
^{N}\right)  \right)  F\left(  \omega_{WN}^{N}\right)  \right]
\]
where $\omega_{WN}^{N}$ denotes the random point vortices initial condition
with law $\mu_{N}^{0}$.

Let $Q^{N}$ be the law of $\omega^{N}$ on Borel subsets of the space
$\mathcal{X}$, as in the previous section. We want to prove that the family
$\left\{  Q^{N}\right\}  _{N\in\mathbb{N}}$ is tight in $\mathcal{X}$, by
proving that it is bounded in probability in $\mathcal{Y}$ (see previous
section). The family $\left\{  Q^{N}\right\}  _{N\in\mathbb{N}}$ is bounded in
probability in $L^{p_{0}}\left(  0,T;H^{-1-\delta}\left(  \mathbb{T}%
^{2}\right)  \right)  $,\ because%
\begin{align*}
\mathbb{E}\left[  \int_{0}^{T}\left\Vert \omega_{t}^{N}\right\Vert
_{H^{-1-\delta}}^{p_{0}}dt\right]   & =\int_{0}^{T}\mathbb{E}\left[
\left\Vert \omega_{t}^{N}\right\Vert _{H^{-1-\delta}}^{p_{0}}\right]  dt\\
& =\int_{0}^{T}\mathbb{E}\left[  C_{N}\rho_{0}\left(  \left(  \Phi_{t}%
^{N}\right)  ^{-1}\left(  \omega_{WN}^{N}\right)  \right)  \left\Vert
\omega_{WN}^{N}\right\Vert _{H^{-1-\delta}}^{p_{0}}\right]  dt\\
& \leq C_{N}\left\Vert \rho_{0}\right\Vert _{\infty}T\mathbb{E}\left[
\left\Vert \omega_{WN}^{N}\right\Vert _{H^{-1-\delta}}^{p_{0}}\right]  \leq
C_{p_{0},\delta}C_{N}\left\Vert \rho_{0}\right\Vert _{\infty}T
\end{align*}
(see the estimate of the previous section). It is bounded in probability in
$W^{1,2}\left(  0,T;H^{-\gamma}\left(  \mathbb{T}^{2}\right)  \right)  $, by
the same arguments given in the previous section, because%
\begin{align*}
& \mathbb{E}\left[  \left\vert \left\langle \omega_{t}^{N}\otimes\omega
_{t}^{N},H_{\phi}\right\rangle \right\vert ^{2}\right] \\
& =\mathbb{E}\left[  C_{N}\rho_{0}\left(  \left(  \Phi_{t}^{N}\right)
^{-1}\left(  \omega_{WN}^{N}\right)  \right)  \left\vert \left\langle
\omega_{WN}^{N}\otimes\omega_{WN}^{N},H_{\phi}\right\rangle \right\vert
^{2}\right] \\
& \leq C_{N}\left\Vert \rho_{0}\right\Vert _{\infty}\mathbb{E}\left[
\left\vert \left\langle \omega_{WN}^{N}\otimes\omega_{WN}^{N},H_{\phi
}\right\rangle \right\vert ^{2}\right] \\
& \leq C_{N}\left\Vert \rho_{0}\right\Vert _{\infty}C\left\Vert H_{\phi
}\right\Vert _{\infty}^{2}\leq C_{N}\left\Vert \rho_{0}\right\Vert _{\infty
}C\left\Vert D^{2}\phi\right\Vert _{\infty}^{2}%
\end{align*}
(all the other steps of the proof are the same). This proves tightness in
$\mathcal{X}$.

Repeating the arguments of the previous section (we use Prohorov and Skorokhod
theorems) we extract a subsequence $N_{k}$, construct a new probability space,
denoted by $\left(  \Xi,\mathcal{F},P\right)  $ and processes $\omega
_{t}^{N_{k}}$, $\omega_{t}$ with trajectories in $\mathcal{X}$, such that the
laws of $\omega^{N_{k}}$ and $\omega$ are $Q^{N_{k}}$ and $Q$ respectively,
and $\omega^{N_{k}}$ converges to $\omega$ in the topology of $\mathcal{X}$,
$P$-a.s.; and the structure of $\omega^{N_{k}}$ as sum of delta Dirac is
identified, namely Lemma \ref{lemma representation} is still true in the case
treated here (the proof does not require modifications). The only difference
is that here the law of (\ref{initial cond}) is $C_{N}\rho_{0}\left(
\omega\right)  \mu_{N}^{0}\left(  d\omega\right)  $. Let us first prove that
the law of $\omega_{t}$ on $H^{-1-}\left(  \mathbb{T}^{2}\right)  $, called
herewith $\mu_{t}$, is absolutely continuous with respect to $\mu$ (the law of
white noise) with bounded density. For every $F\in C_{b}\left(  H^{-1-}\left(
\mathbb{T}^{2}\right)  \right)  $, we have%
\begin{align*}
\int F\left(  \omega\right)  \mu_{t}\left(  d\omega\right)   & =\lim
_{N\rightarrow\infty}\mathbb{E}\left[  F\left(  \omega_{t}^{N}\right)
\right]  =\lim_{N\rightarrow\infty}\mathbb{E}\left[  C_{N}\rho_{0}\left(
\left(  \Phi_{t}^{N}\right)  ^{-1}\left(  \omega_{WN}^{N}\right)  \right)
F\left(  \omega_{WN}^{N}\right)  \right] \\
& \leq\left\Vert \rho_{0}\right\Vert _{\infty}\lim_{N\rightarrow\infty
}\mathbb{E}\left[  F\left(  \omega_{WN}^{N}\right)  \right]  =\left\Vert
\rho_{0}\right\Vert _{\infty}\int F\left(  \omega\right)  \mu\left(
d\omega\right)  .
\end{align*}
This implies $\mu_{t}<<\mu$ with bounded density, denoted in the sequel by
$\rho_{t}$.

We can pass to the limit as in the previous section. Inspection in that proof
reveals that we have only to explain why $E\left[  \left(  \left\vert \int%
_{0}^{t}\left\langle H_{\phi}-H_{\phi}^{\delta},\omega_{s}\otimes\omega
_{s}\right\rangle ds\right\vert \right)  \wedge1\right]  $ and
\begin{equation}
E\left[  \left(  \left\vert \int_{0}^{t}\left\langle H_{\phi}-H_{\phi}%
^{\delta},\omega_{s}^{N_{k}}\otimes\omega_{s}^{N_{k}}\right\rangle
ds\right\vert \right)  \wedge1\right] \label{second term}%
\end{equation}
are small for small $\delta$, uniformly in $k$ for the second term. We have%
\begin{align*}
& E\left[  \left(  \left\vert \int_{0}^{t}\left\langle H_{\phi}-H_{\phi
}^{\delta},\omega_{s}\otimes\omega_{s}\right\rangle ds\right\vert \right)
\wedge1\right]  \leq C\int_{0}^{t}E\left[  \left\vert \left\langle H_{\phi
}-H_{\phi}^{\delta},\omega_{s}\otimes\omega_{s}\right\rangle \right\vert
^{2}\right]  ^{1/2}ds\\
& =C\int_{0}^{t}E\left[  \rho_{s}\left(  \omega_{WN}\right)  \left\vert
\left\langle H_{\phi}-H_{\phi}^{\delta},\omega_{WN}\otimes\omega
_{WN}\right\rangle \right\vert ^{2}\right]  ^{1/2}ds\\
& \leq C\int_{0}^{t}E\left[  \left\vert \left\langle H_{\phi}-H_{\phi}%
^{\delta},\omega_{WN}\otimes\omega_{WN}\right\rangle \right\vert ^{2}\right]
^{1/2}ds
\end{align*}
that is arbitrarily small with $\delta$, due to Corollary \ref{corollary WN}.
The proof for (\ref{second term}) is similar.

\section{Proof of Theorem \ref{Thm intro}\label{Sect proof Thm intro}}

We have proved, see Theorem \ref{Thm AC} part (i), that there exist a
probability space $\left(  \Xi,\mathcal{F},P\right)  $ and a measurable map
$\omega_{\cdot}:\Xi\times\left[  0,T\right]  \rightarrow C^{\infty}\left(
\mathbb{T}^{2}\right)  ^{\prime}$ such that $\omega_{\cdot}$ is a
time-stationary white noise solution of Euler equations, in the sense of
Definition \ref{def WN sol}, and the random point vortex system, defined on
$\left(  \Xi,\mathcal{F},P\right)  $, has a subsequence which converges in law
to this solution, in $C\left(  \left[  0,T\right]  ;H^{-1-}\left(
\mathbb{T}^{2}\right)  \right)  $.

This means that:

\begin{itemize}
\item $\omega_{0}$ is distributed as a white noise, hence it takes values in
$H^{-1-}\left(  \mathbb{T}^{2}\right)  \backslash\left(  H^{-1}\left(
\mathbb{T}^{2}\right)  \cup\mathcal{M}\left(  \mathbb{T}^{2}\right)  \right)
$ and it is a full $\mu$-measure set, where $\mu$ is the enstrophy Gaussian measure;\ 

\item there exists a set $\Xi_{1}\in\mathcal{F}$ with $P\left(  \Xi
_{1}\right)  =1$ such that for all $\theta\in\Xi_{1}$ one has $\omega_{\cdot
}\left(  \theta\right)  \in C\left(  \left[  0,T\right]  ;H^{-1-}\left(
\mathbb{T}^{2}\right)  \right)  $.
\end{itemize}

Moreover, for every $\phi\in C^{\infty}\left(  \mathbb{T}^{2}\right)  $, the
following two claims hold true:

\begin{itemize}
\item for $P$-a.e. $\theta\in\Xi$, $s\mapsto\left\langle \omega_{s}%
\otimes\omega_{s},H_{\phi}\right\rangle \left(  \theta\right)  $ is well
defined as $L^{2}\left(  0,T\right)  $-limit of a subsequence of
$s\mapsto\left\langle \omega_{s}\left(  \theta\right)  \otimes\omega
_{s}\left(  \theta\right)  ,H_{\phi}^{n}\right\rangle $ (Definition
\ref{Def nonlin in t} identifies $s\mapsto\left\langle \omega_{s}\otimes
\omega_{s},H_{\phi}\right\rangle $ by an $L^{2}\left(  \Xi\right)  $-limit,
from which we can extract a subsequence which converges $P$-almost surely)

\item for $P$-a.e. $\theta\in\Xi$, we have the identity uniformly in time:
\[
\left\langle \omega_{t}\left(  \theta\right)  ,\phi\right\rangle =\left\langle
\omega_{0}\left(  \theta\right)  ,\phi\right\rangle +\int_{0}^{t}\left\langle
\omega_{s}\otimes\omega_{s},H_{\phi}\right\rangle \left(  \theta\right)  ds.
\]

\end{itemize}

Therefore, if $\mathcal{D}$\ is a countable set in $C^{\infty}\left(
\mathbb{T}^{2}\right)  $, applying a diagonal procedure to extract a single
subsequence with $P$-a.s. convergence of $\left\langle \omega_{s}\otimes
\omega_{s},H_{\phi}^{n}\right\rangle $, we can find a set $\Xi_{2}%
\in\mathcal{F}$ with $P\left(  \Xi_{2}\right)  =1$ such that for all
$\theta\in\Xi_{2}$:

\begin{itemize}
\item for every $\phi\in\mathcal{D}$, $s\mapsto\left\langle \omega_{s}%
\otimes\omega_{s},H_{\phi}\right\rangle \left(  \theta\right)  $ is well
defined as $L^{2}\left(  0,T\right)  $-limit of a subsequence of
$s\mapsto\left\langle \omega_{s}\left(  \theta\right)  \otimes\omega
_{s}\left(  \theta\right)  ,H_{\phi}^{n}\right\rangle $

\item for every $\phi\in\mathcal{D}$, we have the identity above uniformly in time.
\end{itemize}

Putting together $\Xi_{1,2}:=\Xi_{1}\cap\Xi_{2}$, for all $\theta\in\Xi_{1,2}$
the function $\omega_{\cdot}\left(  \theta\right)  $ satisfies the conditions
of Theorem \ref{Thm intro}, part (i), for all $\phi\in\mathcal{D}$. We have
thus proved such claim, limited to $\phi\in\mathcal{D}$.

Assume $\mathcal{D}$\ is also dense in $C^{\infty}\left(  \mathbb{T}%
^{2}\right)  $;\ precisely we shall use density in $H^{-\gamma}\left(
\mathbb{T}^{2}\right)  $ for some $\gamma>3$. Given $\phi\in H^{-\gamma
}\left(  \mathbb{T}^{2}\right)  $, take $\phi_{k}\rightarrow\phi$ in
$H^{-\gamma}\left(  \mathbb{T}^{2}\right)  $, $\phi_{k}\in\mathcal{D}$. We
have%
\begin{align*}
& \int_{0}^{T}\left\vert \left\langle \omega_{s}\left(  \theta\right)
\otimes\omega_{s}\left(  \theta\right)  ,H_{\phi}^{n}-H_{\phi}^{m}%
\right\rangle \right\vert ^{2}ds\\
& \leq2\int_{0}^{T}\left\vert \left\langle \omega_{s}\left(  \theta\right)
\otimes\omega_{s}\left(  \theta\right)  ,H_{\phi_{k}}^{n}-H_{\phi_{k}}%
^{m}\right\rangle \right\vert ^{2}ds+2\int_{0}^{T}\left\vert \left\langle
\omega_{s}\left(  \theta\right)  \otimes\omega_{s}\left(  \theta\right)
,H_{\phi_{k}-\phi}^{n}-H_{\phi_{k}-\phi}^{m}\right\rangle \right\vert ^{2}ds
\end{align*}
hence, to get that $s\mapsto\left\langle \omega_{s}\left(  \theta\right)
\otimes\omega_{s}\left(  \theta\right)  ,H_{\phi}^{n}\right\rangle $ is Cauchy
in $L^{2}\left(  0,T\right)  $ it is sufficient to prove that
\[
\int_{0}^{T}\left\vert \left\langle \omega_{s}\left(  \theta\right)
\otimes\omega_{s}\left(  \theta\right)  ,H_{\phi_{k}-\phi}^{n}\right\rangle
\right\vert ^{2}ds
\]
is small uniformly in $n$, if $k$ is large enough. Let us prove that this
property is true in a set $\Xi_{3}\in\mathcal{F}$ with $P\left(  \Xi
_{3}\right)  =1$. Then the proof of Theorem \ref{Thm intro}, part (i), will be
complete, considering $\theta\in\Xi_{1,2,3}:=\Xi_{1}\cap\Xi_{2}\cap\Xi_{3}$.

Consider the distribution $g_{s}^{n}\left(  \theta\right)  $ defined as%
\[
\left\langle g_{s}^{n}\left(  \theta\right)  ,\phi\right\rangle :=\left\langle
\omega_{s}\left(  \theta\right)  \otimes\omega_{s}\left(  \theta\right)
,H_{\phi}^{n}\right\rangle .
\]
We have%
\begin{align*}
\left\Vert g_{s}^{n}\left(  \theta\right)  \right\Vert _{H^{-\gamma}}^{2}  &
=\sum_{k}\left(  1+\left\vert k\right\vert ^{2}\right)  ^{-\gamma}\left\vert
\left\langle g_{s}^{n}\left(  \theta\right)  ,e_{k}\right\rangle \right\vert
^{2}\\
& =\sum_{k}\left(  1+\left\vert k\right\vert ^{2}\right)  ^{-\gamma}\left\vert
\left\langle \omega_{s}\left(  \theta\right)  \otimes\omega_{s}\left(
\theta\right)  ,H_{e_{k}}^{n}\right\rangle \right\vert ^{2}%
\end{align*}%
\begin{align*}
\mathbb{E}\left[  \int_{0}^{T}\left\Vert g_{s}^{n}\right\Vert _{H^{-\gamma}%
}^{2}ds\right]   & =\sum_{k}\left(  1+\left\vert k\right\vert ^{2}\right)
^{-\gamma}\mathbb{E}\left[  \int_{0}^{T}\left\vert \left\langle \omega
_{s}\otimes\omega_{s},H_{e_{k}}^{n}\right\rangle \right\vert ^{2}ds\right] \\
& \leq CT\sum_{k}\left(  1+\left\vert k\right\vert ^{2}\right)  ^{-\gamma
}\left\Vert e_{k}\right\Vert _{C^{2}}^{2}\leq CT\sum_{k}\left(  1+\left\vert
k\right\vert ^{2}\right)  ^{-\gamma}\left\vert k\right\vert ^{4}%
\end{align*}
and this is finite when $\gamma>3$. Hence there is a set $\Xi_{3}%
\in\mathcal{F}$ with $P\left(  \Xi_{3}\right)  =1$, such that $\int_{0}%
^{T}\left\Vert g_{s}^{n}\left(  \theta\right)  \right\Vert _{H^{-\gamma}}%
^{2}ds<\infty$ for all $\theta\in\Xi_{3}$. For such $\theta$ we have
\[
\int_{0}^{T}\left\vert \left\langle \omega_{s}\left(  \theta\right)
\otimes\omega_{s}\left(  \theta\right)  ,H_{\phi_{k}-\phi}^{n}\right\rangle
\right\vert ^{2}ds=\int_{0}^{T}\left\vert \left\langle g_{s}^{n}\left(
\theta\right)  ,\phi_{k}-\phi\right\rangle \right\vert ^{2}ds\leq C\left(
\theta\right)  \left\Vert \phi_{k}-\phi\right\Vert _{H^{\gamma}}^{2}%
\]
where $C\left(  \theta\right)  :=\int_{0}^{T}\left\Vert g_{s}^{n}\left(
\theta\right)  \right\Vert _{H^{-\gamma}}^{2}ds<\infty$. Hence we have the
required property.

As to claim (ii) of Theorem \ref{Thm intro}, we invoke the result of
\cite{MarPulv93}. First, let us recall Theorem \ref{Thm AC} part (ii): the
solution (not unique) provided by part (i) is the $P$-a.s. limit in $C\left(
\left[  0,T\right]  ;H^{-1-}\left(  \mathbb{T}^{2}\right)  \right)  $ of a
subsequence of the random point vortex system (\ref{vortex system}), defined
also on $\left(  \Xi,\mathcal{F},P\right)  $. This means that there is
$\left(  N_{k}\right)  _{k\in\mathbb{N}}$ and a subset $\Xi_{4}\in\mathcal{F}
$ of $\Xi_{1,2,3}$, still with $P\left(  \Xi_{4}\right)  =1$ such that for all
$\theta\in\Xi_{4}$ the function $\omega_{\cdot}\left(  \theta\right)  $ is the
$C\left(  \left[  0,T\right]  ;H^{-1-}\left(  \mathbb{T}^{2}\right)  \right)
$-limit of the sequence $\frac{1}{\sqrt{N_{k}}}\sum_{n=1}^{N_{k}}\xi
_{n}\left(  \theta\right)  \delta_{X_{t}^{n}\left(  \theta\right)  }$; with
the understanding that $\Xi_{4}$ is such that for all $\theta\in\Xi_{4}$ the
corresponding point vortex dynamics is well defined for all times, without
coalescence of points.

Taken $\theta\in\Xi_{4}$, the function $\omega_{\cdot}\left(  \theta\right)  $
satisfies the conditions of Theorem \ref{Thm intro}, part (i). In addition,
given any $\epsilon>0$, there is $k_{\epsilon}\in\mathbb{N}$ such that
\[
\sup_{t\in\left[  0,T\right]  }d_{H^{-1-}}\left(  \omega_{t}\left(
\theta\right)  ,\frac{1}{\sqrt{N_{k_{\epsilon}}}}\sum_{i=1}^{N_{k_{\epsilon}}%
}\xi_{i}\left(  \theta\right)  \delta_{X_{t}^{i}\left(  \theta\right)
}\right)  <\epsilon/2.
\]
Hence, for every $\phi\in C^{\infty}\left(  \mathbb{T}^{2}\right)  $ one has%
\[
\sup_{t\in\left[  0,T\right]  }\left\vert \left\langle \omega_{t}\left(
\theta\right)  ,\phi\right\rangle -\left\langle \frac{1}{\sqrt{N_{k_{\epsilon
}}}}\sum_{i=1}^{N_{k_{\epsilon}}}\xi_{i}\left(  \theta\right)  \delta
_{X_{t}^{i}\left(  \theta\right)  },\phi\right\rangle \right\vert <\epsilon/2.
\]
We now apply Theorem 2.1 of \cite{MarPulv93} to $\frac{1}{\sqrt{N_{k_{\epsilon
}}}}\sum_{i=1}^{N_{k_{\epsilon}}}\xi_{i}\left(  \theta\right)  \delta
_{X_{t}^{i}\left(  \theta\right)  }$, applicable because this solution of the
point vortex dynamics is global (namely without coalescence). It claims that
there exists a sequence $\omega_{\cdot}^{\left(  n\right)  }$ of solutions of
class $L^{\infty}\left(  \left[  0,T\right]  \times\mathbb{T}^{2}\right)  \cap
C\left(  \left[  0,T\right]  ;L^{p}\left(  \mathbb{T}^{2}\right)  \right)  $
for every $p\in\lbrack1,\infty)$, such that for every $\phi\in C\left(
\mathbb{T}^{2}\right)  $ one has%
\[
\lim_{n\rightarrow\infty}\sup_{t\in\left[  0,T\right]  }\left\vert
\left\langle \omega_{t}^{\left(  n\right)  },\phi\right\rangle -\left\langle
\frac{1}{\sqrt{N_{k_{\epsilon}}}}\sum_{i=1}^{N_{k_{\epsilon}}}\xi_{i}\left(
\theta\right)  \delta_{X_{t}^{i}\left(  \theta\right)  },\phi\right\rangle
\right\vert =0.
\]
Hence, given the value of $\epsilon$ above, there is $n_{0}$ such that for all
$n>n_{0}$%
\[
\sup_{t\in\left[  0,T\right]  }\left\vert \left\langle \omega_{t}^{\left(
n\right)  },\phi\right\rangle -\left\langle \frac{1}{\sqrt{N_{k_{\epsilon}}}%
}\sum_{i=1}^{N_{k_{\epsilon}}}\xi_{i}\left(  \theta\right)  \delta_{X_{t}%
^{i}\left(  \theta\right)  },\phi\right\rangle \right\vert <\epsilon/2.
\]
We deduce $\sup_{t\in\left[  0,T\right]  }\left\vert \left\langle \omega
_{t}\left(  \theta\right)  ,\phi\right\rangle -\left\langle \omega
_{t}^{\left(  n\right)  },\phi\right\rangle \right\vert <\epsilon$, concluding
the proof of Theorem \ref{Thm intro}, part (ii).

\section{Remarks on $\rho$-white noise solutions}

\subsection{The continuity equation}

Let $\mu$ be the law of white noise. Following \cite{DaPratoRoeckner},
\cite{DFR} and related literature, let us denote by $\mathcal{FC}_{b,T}^{1}$
the set of all functionals $F:\left[  0,T\right]  \times C^{\infty}\left(
\mathbb{T}^{2}\right)  ^{\prime}\rightarrow\mathbb{R}$ of the form $F\left(
t,\omega\right)  =\sum_{i=1}^{m}\widetilde{f}_{i}\left(  \left\langle
\omega,\phi_{1}\right\rangle ,...,\left\langle \omega,\phi_{n}\right\rangle
\right)  g_{i}\left(  t\right)  $, with $\phi_{1},...,\phi_{n}\in C^{\infty
}\left(  \mathbb{T}^{2}\right)  $, $\widetilde{f}_{i}\in C_{b}^{1}\left(
\mathbb{R}^{n}\right)  $, $g_{i}\in C^{1}\left(  \left[  0,T\right]  \right)
$ with $g_{i}\left(  T\right)  =0$. Given $F\in\mathcal{FC}_{b,T}^{1}$, denote
by $D_{\omega}F\left(  t,\omega\right)  $ the function
\[
\sum_{i=1}^{m}\sum_{j=1}^{n}\partial_{j}\widetilde{f}_{i}\left(  \left\langle
\omega,\phi_{1}\right\rangle ,...,\left\langle \omega,\phi_{n}\right\rangle
\right)  g_{i}\left(  t\right)  \phi_{j}.
\]

\begin{definition}
Given $F\in\mathcal{FC}_{b,T}^{1}$, we set%
\[
\left\langle D_{\omega}F\left(  t,\omega\right)  ,b\left(  \omega\right)
\right\rangle :=\sum_{i=1}^{m}\sum_{j=1}^{n}\partial_{j}\widetilde{f}%
_{i}\left(  \left\langle \omega,\phi_{1}\right\rangle ,...,\left\langle
\omega,\phi_{n}\right\rangle \right)  g_{i}\left(  t\right)  \left\langle
\omega\otimes\omega,H_{\phi_{j}}\right\rangle
\]
where $\left\langle \omega\otimes\omega,H_{\phi_{j}}\right\rangle $,
$j=1,...,n$, are the elements of $L^{2}\left(  \Xi\right)  $ given by Theorem
\ref{Thm Cauchy}. Hence $\left\langle D_{\omega}F\left(  t,\omega\right)
,b\left(  \omega\right)  \right\rangle $ is an element of $C\left(  \left[
0,T\right]  ;L^{2}\left(  \Xi\right)  \right)  $.
\end{definition}

\begin{definition}
We say that a bounded measurable function $\rho:\left[  0,T\right]  \times
H^{-1-}\left(  \mathbb{T}^{2}\right)  \rightarrow\lbrack0,\infty)$ is a
bounded weak solution of the continuity equation%
\begin{equation}
\partial_{t}\rho_{t}+\operatorname{div}_{\mu}\left(  \rho_{t}b\right)
=0\label{cont eq}%
\end{equation}
with initial condition $\rho_{0}$, if
\[
\int_{0}^{T}\int_{H^{-1-\delta/2}}\left(  \partial_{t}F\left(  t,\omega
\right)  +\left\langle D_{\omega}F\left(  t,\omega\right)  ,b\left(
\omega\right)  \right\rangle \right)  \rho_{t}\left(  \omega\right)
\mu\left(  d\omega\right)  dt=-\int_{H^{-1-\delta/2}}F\left(  0,\omega\right)
\rho_{0}\left(  \omega\right)  \mu\left(  d\omega\right)
\]
for all $F\in\mathcal{FC}_{b,T}^{1}$.
\end{definition}

\begin{proposition}
Any function $\rho$ given by Theorem \ref{Thm AC rho} is a bounded weak
solution of the continuity equation (\ref{cont eq}).
\end{proposition}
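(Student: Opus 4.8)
The plan is to obtain the weak continuity equation as the expectation of a pathwise chain rule applied to $F(t,\omega_t)$ along a $\rho$-white noise solution $\omega_{\cdot}$ provided by Theorem~\ref{Thm AC rho}. Fix $F\in\mathcal{FC}_{b,T}^{1}$, say $F(t,\omega)=\sum_{i=1}^m\widetilde f_i(\langle\omega,\phi_1\rangle,\dots,\langle\omega,\phi_n\rangle)g_i(t)$. By Definition~\ref{def rho WN sol}, for every $j$ the map $t\mapsto\langle\omega_t,\phi_j\rangle$ equals, $P$-a.s.\ and uniformly in $t$,
\[
\langle\omega_0,\phi_j\rangle+\int_0^t\langle\omega_s\otimes\omega_s,H_{\phi_j}\rangle\,ds,
\]
whose integrand lies in $L^1(0,T)$ for $P$-a.e.\ realization by Definition~\ref{Def nonlin in t rho}. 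Hence $t\mapsto\langle\omega_t,\phi_j\rangle$ is absolutely continuous with a.e.\ derivative $\langle\omega_t\otimes\omega_t,H_{\phi_j}\rangle$, and since $\widetilde f_i\in C^1_b$ and $g_i\in C^1$, the composite $t\mapsto F(t,\omega_t)$ is absolutely continuous. The chain rule for absolutely continuous functions then gives, for a.e.\ $t$ and $P$-a.s.,
\[
\frac{d}{dt}F(t,\omega_t)=\partial_tF(t,\omega_t)+\langle D_\omega F(t,\omega_t),b(\omega_t)\rangle,
\]
the last term being exactly $\sum_{i,j}\partial_j\widetilde f_i(\cdots)g_i(t)\langle\omega_t\otimes\omega_t,H_{\phi_j}\rangle$ as in the definition of $\langle D_\omega F(t,\omega),b(\omega)\rangle$. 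Integrating over $[0,T]$ and using $g_i(T)=0$, so that $F(T,\cdot)=0$, yields
\[
-F(0,\omega_0)=\int_0^T\Big(\partial_tF(t,\omega_t)+\langle D_\omega F(t,\omega_t),b(\omega_t)\rangle\Big)\,dt\qquad P\text{-a.s.}
\]

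Next I would take expectations and convert them into integrals against $\rho_t\,d\mu$. Integrability is controlled by the modified-white-noise estimate $\mathbb{E}[|\langle\omega_t\otimes\omega_t,H_{\phi_j}\rangle|]\le C\|H_{\phi_j}\|_\infty$ (the integrability lemma underlying Definition~\ref{Def nonlin in t rho}), together with boundedness of the $\partial_j\widetilde f_i$, $g_i$ and $g_i'$; this makes the double integral absolutely convergent, so Fubini applies and the order of $\mathbb{E}$ and $\int_0^T$ may be exchanged. The term $\mathbb{E}[F(0,\omega_0)]$ becomes $\int F(0,\omega)\rho_0(\omega)\,\mu(d\omega)$ because $\omega_0$ has law $\rho_0\,d\mu$, and likewise $\mathbb{E}[\partial_tF(t,\omega_t)]=\int\partial_tF(t,\omega)\rho_t(\omega)\,\mu(d\omega)$ since $\omega_t$ has law $\rho_t\,d\mu$.

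The key point is the nonlinear term. I would record that $\omega\mapsto\langle\omega\otimes\omega,H_{\phi_j}\rangle$ is, $\mu$-a.e., a genuine measurable function of its argument: each approximant $\langle\omega\otimes\omega,H_{\phi_j}^n\rangle$ is a function of $\omega$, and by Theorem~\ref{Thm Cauchy} (resp.\ its $L^1(\mu)$ analogue) these converge to a $\sigma(\omega)$-measurable limit, so by Doob--Dynkin there is a $\mu$-a.e.\ defined $b_{\phi_j}$ with $\langle\omega\otimes\omega,H_{\phi_j}\rangle=b_{\phi_j}(\omega)$. Since $\rho_t\,d\mu\ll\mu$ with bounded density, $b_{\phi_j}$ is defined $\rho_t\mu$-a.e., and the $\rho$-white noise nonlinear term at time $t$ coincides with $b_{\phi_j}(\omega_t)$ (the $L^1(\mu)$ convergence $H_{\phi_j}^n\to H_{\phi_j}$ transfers to $L^1(\rho_t\mu)$ by boundedness of $\rho_t$). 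Consequently $\mathbb{E}[\langle D_\omega F(t,\omega_t),b(\omega_t)\rangle]=\int\langle D_\omega F(t,\omega),b(\omega)\rangle\rho_t(\omega)\,\mu(d\omega)$, precisely the expression appearing in the weak formulation. Assembling the three pieces reproduces
\[
\int_0^T\!\!\int\Big(\partial_tF+\langle D_\omega F,b\rangle\Big)\rho_t\,\mu(d\omega)\,dt=-\int F(0,\omega)\rho_0(\omega)\,\mu(d\omega),
\]
which is the desired identity for the weak solution of \eqref{cont eq}.

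The main obstacle is this identification of the nonlinear term as a fixed, $t$-independent measurable functional of $\omega$, and the verification that the abstract $L^1/L^2$-limit defining $\langle\omega_t\otimes\omega_t,H_{\phi_j}\rangle$ agrees $P$-a.s.\ with the a.e.\ pathwise derivative of $\langle\omega_t,\phi_j\rangle$ entering the chain rule. Once a Lebesgue differentiation argument ties the integral representation in Definition~\ref{def rho WN sol} to that pathwise derivative, and Doob--Dynkin furnishes $b_{\phi_j}$, the remaining steps (integrability, Fubini, and the change of variables via the density $\rho_t$) are routine.
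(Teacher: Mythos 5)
Your proposal is correct and follows essentially the same route as the paper: use the weak vorticity formulation to get a.e.\ differentiability of $t\mapsto\left\langle \omega_{t},\phi_{j}\right\rangle$, apply the chain rule to $F\left(t,\omega_{t}\right)$, take expectations, and convert to integrals against $\rho_{t}\,d\mu$ using the law of $\omega_{t}$, concluding via $g_{i}\left(T\right)=0$. The only differences are refinements of steps the paper treats implicitly — you integrate pathwise first and then apply Fubini (the paper exchanges $\partial_{t}$ with $\mathbb{E}$ instead), you correctly note the trajectories of the nonlinear term are $L^{1}\left(0,T\right)$ rather than $L^{2}\left(0,T\right)$ in the $\rho$-case, and you make explicit the Doob--Dynkin identification of $\left\langle \omega\otimes\omega,H_{\phi_{j}}\right\rangle$ as a measurable functional of $\omega$, which the paper's weak formulation uses without comment.
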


\begin{proof}
Let $\omega$ be a solution of Euler equations given by Theorem
\ref{Thm AC rho}, with the associated density function $\rho$. Given
$F\in\mathcal{FC}_{b,T}^{1}$ of the form $F\left(  t,\omega\right)
=\sum_{i=1}^{m}\widetilde{f}_{i}\left(  \left\langle \omega,\phi
_{1}\right\rangle ,...,\left\langle \omega,\phi_{n}\right\rangle \right)
g_{i}\left(  t\right)  $, we know that
\[
\left\langle \omega_{t},\phi_{j}\right\rangle =\left\langle \omega_{0}%
,\phi_{j}\right\rangle +\int_{0}^{t}\left\langle \omega_{s}\otimes\omega
_{s},H_{\phi_{j}}\right\rangle ds
\]
for every $j=1,...,n$. Here $P$-a.s. the function $s\mapsto\left\langle
\omega_{s}\otimes\omega_{s},H_{\phi_{j}}\right\rangle $ is of class
$L^{2}\left(  0,T\right)  $. Hence $\left\langle \omega_{t},\phi
_{j}\right\rangle $ is differentiable a.s. in time. We have, $P$-a.s., a.s. in
time,
\begin{align*}
& \partial_{t}\left(  F\left(  t,\omega_{t}\right)  \right) \\
& =\sum_{i=1}^{m}\sum_{j=1}^{n}\partial_{j}\widetilde{f}_{i}\left(
\left\langle \omega_{t},\phi_{1}\right\rangle ,...,\left\langle \omega
_{t},\phi_{n}\right\rangle \right)  g_{i}\left(  t\right)  \partial
_{t}\left\langle \omega_{t},\phi_{j}\right\rangle +\sum_{i=1}^{m}%
\widetilde{f}_{i}\left(  \left\langle \omega_{t},\phi_{1}\right\rangle
,...,\left\langle \omega_{t},\phi_{n}\right\rangle \right)  g_{i}^{\prime
}\left(  t\right) \\
& =\left\langle D_{\omega}F\left(  t,\omega_{t}\right)  ,b\left(  \omega
_{t}\right)  \right\rangle +\partial_{t}F\left(  t,\omega\right)
|_{\omega=\omega_{t}}%
\end{align*}
and thus%
\begin{align*}
& \int_{0}^{T}\int_{H^{-1-\delta/2}}\left(  \partial_{t}F\left(
t,\omega\right)  +\left\langle D_{\omega}F\left(  t,\omega\right)  ,b\left(
\omega\right)  \right\rangle \right)  \rho_{t}\left(  \omega\right)
\mu\left(  d\omega\right)  dt\\
& =\int_{0}^{T}\mathbb{E}\left[  \partial_{t}F\left(  t,\omega\right)
|_{\omega=\omega_{t}}+\left\langle D_{\omega}F\left(  t,\omega_{t}\right)
,b\left(  \omega_{t}\right)  \right\rangle \right]  dt\\
& =\int_{0}^{T}\mathbb{E}\left[  \partial_{t}\left(  F\left(  t,\omega
_{t}\right)  \right)  \right]  dt=\int_{0}^{T}\partial_{t}\mathbb{E}\left[
F\left(  t,\omega_{t}\right)  \right]  dt\\
& =\mathbb{E}\left[  F\left(  T,\omega_{T}\right)  \right]  -\mathbb{E}\left[
F\left(  0,\omega_{0}\right)  \right] \\
& =-\int_{H^{-1-\delta/2}}F\left(  0,\omega\right)  \rho_{0}\left(
\omega\right)  \mu\left(  d\omega\right)
\end{align*}
where the exchange of time-derivative and expectation is possible due to the
boundedness of terms in $F$; and we have used $g_{i}\left(  T\right)  =0$.
\end{proof}

The analysis of this continuity equation deserves more attention; we have just
mentioned here as a starting point of future investigations.

\subsection{An open problem\label{sect open}}

We have treated above the problem of approximating Albeverio-Cruzeiro solution
by smoother solutions of the Euler equations. Let us mention a sort of dual
problem, that can be formulated thanks to Theorem \ref{Thm AC rho}.

Given $\overline{\omega}_{0}\in L^{\infty}\left(  \mathbb{T}^{2}\right)  $,
there exists a unique solution $\overline{\omega}_{t}$ in $L^{\infty}\left(
\mathbb{T}^{2}\right)  $ of the Euler equations (point 1 of the Introduction).
For every $\epsilon>0$, consider the density
\[
\rho_{0}^{\left(  \epsilon\right)  }\left(  \omega\right)  =\frac{1}{Z_{R}%
}\exp\left(  -\frac{d_{H^{-1-}}\left(  \omega,\overline{\omega}_{0}\right)
^{2}}{2\epsilon}\right)
\]
defined on $H^{-1-}\left(  \mathbb{T}^{2}\right)  $, where
\[
Z_{R}=\int_{H^{-1-\delta}\left(  \mathbb{T}^{2}\right)  }\exp\left(
-\frac{d_{H^{-1-}}\left(  \omega,\overline{\omega}_{0}\right)  ^{2}}%
{2\epsilon}\right)  \mu\left(  d\omega\right)  .
\]
Let $\omega_{t}^{\left(  \epsilon\right)  }$ be a $\rho$-white noise solution,
provided by Theorem \ref{Thm AC rho}, corresponding to this initial density
$\rho_{0}^{\left(  \epsilon\right)  }$. Can we prove that $\omega_{t}^{\left(
\epsilon\right)  }$ converges, in a suitable sense, to $\overline{\omega}_{t}$?

We do not know the solution of this problem. Let us only remark that it looks
similar to the question of vortex point approximation of solutions of Euler
equations, solved in a smoothed Biot-Savart kernel scheme by \cite{MarPulv}
and in great generality by \cite{Shochet}. Also, very roughly, reminds large
deviation approximations of smooth paths by diffusion processes.

Theorems \ref{Thm AC} and \ref{Thm AC rho} give some intuition into
Albeverio-Cruzeiro solution and its variants, as a limit of random point
vortices. A positive solution of the previous problem would add more.

\begin{acknowledgement}
We thank Nicolai Tzvetkov for suggesting the problem of approximation by
solutions of the Euler equations, in contrast to the more traditional schemes
based on approximating equations (originally the author had presented a form
of Theorems \ref{Thm AC} and \ref{Thm AC rho} based on a Leray type
approximation; in \cite{AlbCruz} it is based on Galerkin approximations). The
idea of approximating Albeverio-Cruzeiro solution by random point vortices has
been discussed by the author with Hakima Bessaih, Marco Romito and Carl
M\"{u}ller some years ago; and recently with Romito again it was discussed the
possibility to use it as a bridge between Albeverio-Cruzeiro solution and more
smooth solutions; the author is deeply indebted to them for several ideas. We
thank Michael R\"{o}ckner for very important comments that allowed us to
improve Definition \ref{def WN sol} (see Definition \ref{Def nonlin in t}). We
also thank Giuseppe Da Prato, Ana Bela Cruzeiro, Sergio Albeverio, Benedetta
Ferrario and Francesco Grotto for important discussions and bibliographical comments.
\end{acknowledgement}

\end{document}